    \def\qed{\hfill$\sqcap\kern-8.0pt\hbox{$\sqcup$}$\\}
    \def\beq{\begin{eqnarray}}
    \def\eeq{\end{eqnarray}}
    \def\beqq{\begin{eqnarray*}}
    \def\eeqq{\end{eqnarray*}}
      \def\PP{{\mathbb P}}
    \def\EE{{\mathbb E}}
    \def\r{{\mathbb R}}
    \def\ind{{\mathbb I}}
    \def\vv{{\textnormal v}}
        \def\calS{{\mathcal S}}
         \def\calI{{\mathcal I}}
\newtheorem{theorem}{Theorem}[section]
\newtheorem{lemma}[theorem]{Lemma}
\newtheorem{proposition}[theorem]{Proposition}
\theoremstyle{definition}
\newtheorem{definition}{Definition}[section]
\newtheorem{remark}[theorem]{Remark}
\renewcommand*\env@matrix[1][\arraystretch]{%
  \edef\arraystretch{#1}%
  \hskip -\arraycolsep
  \let\@ifnextchar\new@ifnextchar
  \array{*\c@MaxMatrixCols c}}
\numberwithin{equation}{section}
\newcommand{\la}{\lambda}
\newcommand{\eps}{\varepsilon}
  \newcommand{\comment}[1]{\ovalbox{\footnotesize \color{magenta}#1}}
  \newcommand{\hide}[1]{ \comment{\tiny ..hidden text ..}}
\renewcommand{\hide}[1]{\comment{begin suppressed text} #1  \comment{end suppressed text}}
\def\vv#1{{\boldsymbol #1}}
\def\wt#1{\widetilde{#1}}
\newcommand{\RR}{\r}
\newcommand{\NN}{\mathbb{N}}
\def\Var{{\rm Var}}
\def\topp#1{^{(#1)}}
\newcommand{\C}{  \mathsf c }
\newcommand{\A}{  \mathsf a}
\newcommand{\B}{ \mathsf b}
\newcommand{\U}{   \mathsf u  }
\newcommand{\V}{    \mathsf v  }
\newcommand{\ff}{F}
\newcommand{\fa}{A}
\newcommand{\floor}[1]{\left\lfloor #1 \right\rfloor}
\title[Two-line   representation  of stationary measure for open TASEP ]
{A two-line   representation of stationary measure for open TASEP }
\author{W{\l}odek Bryc}
\address
{
W\l odzimierz Bryc\\
Department of Mathematical Sciences\\
University of Cincinnati\\
2815 Commons Way\\
Cincinnati, OH, 45221-0025, USA.
}
\email{wlodek.bryc@gmail.com}
\author{Pavel  Zatitskii}
\address
{
Pavel  Zatitskii\\
Department of Mathematical Sciences\\
University of Cincinnati\\
2815 Commons Way\\
Cincinnati, OH, 45221-0025, USA.
}
\email{zatitspl@ucmail.uc.edu}
\keywords{Gibbs line measure, totally asymmetric exclusion process, large deviations, fluctuations of particle density}
\subjclass[2020]{60K35;60F10}
\newcounter{oldeq}
\newcounter{usesofarxiv}
 \newcommand{\arxiv}[1]{
\setcounter{oldeq}{\value{equation}}
 \addtocounter{usesofarxiv}{1}
 \setcounter{equation}{0}
\def\theoldeq{\theequation}
\def\theequation{x-\arabic{usesofarxiv}.\arabic{equation}}
\def\theequation{\arabic{section}.\arabic{usesofarxiv}.\arabic{equation}}
\def\theequation{\thesection.\arabic{usesofarxiv}.\arabic{equation}}
  \colorlet{shadecolor}{gray!10}
{\footnotesize
\begin{shaded}#1
\end{shaded}
   \setcounter{equation}{\value{oldeq}}
\numberwithin{equation}{section}
}\color{black}}
\renewcommand{\hide}[1]{}
\begin{document}
\maketitle

\begin{abstract}
We show that the stationary measure for the totally asymmetric simple exclusion
process on a segment with open boundaries is given by a marginal
of a two-line measure with a simple and explicit description.
We use this representation to analyze asymptotic
fluctuations of the height function near the triple point for a larger set of parameters than was previously studied. As a second application, we determine a single expression for the rate function in the
large deviation principle for the height function in the fan and in the shock region. We then discuss how this expression relates to the expressions for the rate function available in the literature.
\end{abstract}

\arxiv{This is an expanded version of the paper with additional details.}
\section{Introduction}
A totally asymmetric simple exclusion process (TASEP) with open boundaries is a continuous-time finite-state Markov process that models the movement of particles along the $N$ sites  $\{1,\dots,N\}$  from the left reservoir to the right reservoir. The particles cannot occupy the same site, and can move only to the nearest site on the right at rate $1$. The particles arrive at the first location, if empty, at rate $\alpha>0$ and leave the system from the $N$-th site, if occupied, at rate $\beta>0$.
For a description of the infinitesimal generator of this Markov process, we refer to e.g. \cite[Section 3]{Liggett-1975}. We will be interested solely in the stationary measure of this process.

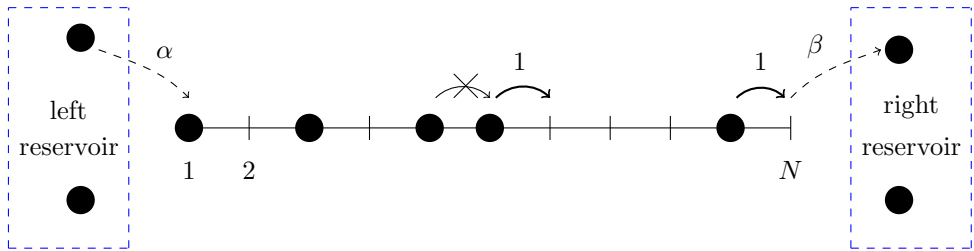
\begin{figure}[htb]
  \begin{tikzpicture}[scale=.8]
     \draw[-] (0.5,1) to (10.5,1);
\draw [fill=black, ultra thick] (.5,1) circle [radius=0.2];
  \draw[-] (1.5,.8) to (1.5,1.2);
\draw [fill=black, ultra thick] (2.5,1) circle [radius=0.2];
  \draw [fill=black,ultra thick] (4.5,1) circle [radius=0.2];
   \draw [fill=black, ultra thick] (5.5,1) circle [radius=0.2];

   \node [above] at (-1.5,1.) {left};
   \node [below] at (-1.5,1.) {reservoir};
      \draw[-,dashed,blue] (-2.5,-1) to (-2.5,3);
    \draw[-,dashed,blue] (-0.5,-1) to (-0.5,3);
        \draw[-,dashed,blue] (-2.5,3) to (-0.5,3);
             \draw[-,dashed,blue] (-2.5,-1) to (-0.5,-1);
      \node [above] at (12.5,1.) {right };
       \node [below] at (12.5,1.) {reservoir};
      \draw[-,dashed,blue] (11.5,-1) to (11.5,3);
       \draw[-,dashed,blue] (11.5,3) to (13.5,3);
       \draw[-,dashed,blue] (11.5,-1) to (13.5,-1);
         \draw[-,dashed,blue] (13.5,-1) to (13.5,3);
      \draw [fill=black, ultra thick] (9.5,1) circle [radius=0.2];
     \draw[-] (10.5,.8) to (10.5,1.2);
     \draw[->,dashed] (-1,2.3) to [out=-20,in=135] (.5,1.5);
   \node [above right] at (-.2,2) {$\alpha$};
   \draw [fill=black, ultra thick] (-1.3,2.5) circle [radius=0.2];
     \draw[->,dashed] (10.5,1.5) to [out=45,in=200] (12,2.3);
     \node [above left] at (11.2,2) {$\beta$};
           \draw [fill=black, ultra thick] (12.3,2.3) circle [radius=0.2];

       \draw[-] (6.5,.8) to (6.5,1.2);
          \draw[-] (7.5,.8) to (7.5,1.2);
        \draw[-] (8.5,.8) to (8.5,1.2);
        \draw[-] (3.5,.8) to (3.5,1.2);

      \node [above] at (6.0,1.8) {$1$};
      \draw[->,thick] (5.6,1.5) to [out=45,in=135] (6.5,1.5);
        \draw[->] (4.6,1.5) to [out=45,in=135] (5.5,1.5);
         \draw[-] (4.9,1.5) to   (5.3,1.9);
          \draw[-] (4.9,1.9) to   (5.3,1.5);
                 \node [above] at (10,1.8) {$1$};
              \draw[->,thick] (9.6,1.5) to [out=45,in=135] (10.4,1.5);
     \draw [fill=black, ultra thick] (-1.3,-.2) circle [radius=0.2];
    \node [above] at (0.5,0) {$1$};
    \node [above] at (1.5,0) {$2$};
          \node [above] at (10.5,0) {$N$};
      \draw [fill=black, ultra thick] (12.3,-.2) circle [radius=0.2];
\end{tikzpicture}
  \caption{Totaly Asymmetric Simple Exclusion process with boundary parameters $\alpha,\beta$.}\label{Fig1}
\end{figure}
The stationary measure for open TASEP has been studied for a long time, with explicit expressions available in \cite{schutz1993phase}
and \cite{derrida93exact}.
In this paper we establish a two-line representation for this stationary measure in terms of a pair of weighted random walks.
 We remark that there are numerous other representations for the stationary measure of  TASEP; a  representation   in \cite[Section 5.2]{nestoridi2023approximating} does not separate the "two lines", but it covers a more general ASEP. { Integral representation of the probability generating function  \cite{Bryc-Wesolowski-2015-asep,wang2023askey} is useful in studying Laplace transformations of limiting fluctuations.}
  Ref. \cite[Section 3.2]{duchi2005combinatorial} represents the stationary measure of a sequential TASEP as a marginal of a
   "two-layer" configuration that has a different form than ours.

The Gibbs measure (or line ensemble) representations have been valuable in studying integrable
probabilistic models on full or half-space and have been extended to time-homogeneous models on an interval with two-sided boundary conditions in \cite{barraquand2024stationary}.
Barraquand, Corwin, and Yang \cite[Theorem 1.3]{barraquand2024stationary} establish that stationary measures for the free-energy increment process of geometric last passage percolation on a diagonal strip are described as marginals of explicitly defined two-layer Gibbs measures. They further pose the question of obtaining an explicit description of the open TASEP stationary measure from its implicit connection to the stationary solution of the exponential large passage percolation recurrence. This paper proposes an alternative approach implicitly based on the matrix method \cite{Derrida-DEHP-1993}. We demonstrate that a representation akin to their two-layer Gibbs measure holds for the stationary measure of the TASEP.
We then use this representation to analyze asymptotic fluctuations of particle density for a larger set of parameters than was previously studied, and to prove the large deviations principle with a single expression for the rate function valid for all $\alpha,\beta\in(0,1)$.

We now introduce configuration spaces and probability measures that will facilitate canonical representations of the random variables that we need. We begin with the stationary measure of TASEP which defines a (discrete) probability measure $\PP_{\mathrm{TASEP}}$ on the configuration space $\Omega=\Omega\topp{ N}=\{0,1\}^N$. (We will omit the superscript $N$ when it is fixed in an argument and clearly recognizable from the context.) We assign probability
 $\PP_{\mathrm{TASEP}}(\vv \tau)$ to a sequence $\vv\tau=(\tau_1,\dots,\tau_N)\in \Omega$ that encodes the occupied and empty sites,
  where $\tau_j\in\{0,1\}$  is the occupation indicator of the $j$-th site.
It will be convenient to parameterize $\PP_{\mathrm{TASEP}}$ using
\begin{equation}\label{AC=..}
 \A=\frac{1-\alpha}{\alpha},\quad \B =\frac{1-\beta}{\beta}.
\end{equation}
Formula \eqref{AC=..}  makes sense for all $\alpha,\beta>0$,
and then $\PP_{TASEP}$ is determined by $\A ,\B>-1$ in formula \eqref{P:TASEP}, but in this paper we will only consider $\alpha,\beta\in(0,1)$, so that $\A ,\B> 0$.

  The steady state {\em height function} $\vv H_N$ is defined by
\begin{equation}\label{HN}
  H_N(k)=\tau_1+\dots +\tau_k, \quad k=0,1,\dots,N.
\end{equation}
   The invariant law
 $\PP_{\mathrm{TASEP}}$ is uniquely determined by the law $\PP_{\mathrm{H}}\topp{N}$  induced by $\vv H_N$ on the configuration  space
\begin{equation*}
  \label{calS}
\calS:=\left\{\vv s=(s_j)_{0\leq j\leq N}:\; s_0=0,\; s_j-s_{j-1}\in\{0,1\},\; 1\leq j\leq N \right\}.
\end{equation*}
Indeed,
$$\PP_{\mathrm{H}}\topp{N}(\vv s)=\PP_{\mathrm{TASEP}}(\vv \tau)
$$ with unique $\vv \tau$ such that $\vv s=\vv H_N(\vv \tau)$, as $\vv H_N:\Omega\to\calS$ is a bijection.
Instead of determining $\PP_{\mathrm{TASEP}}$, we will therefore determine $\PP_{\mathrm{H}}\topp{N}$ as a marginal law of the top line of the two-line   ensemble on the configuration space $\calS\times\calS$.

Denote by $\PP_{\mathrm{rw}}$,
the uniform law on $\calS\times\calS$  defined by two independent random walks with i.~i.~d. $\mathrm{Bernoulli}$  increments $\Pr(\xi=0)=\Pr(\xi=1)=1/2$,
$$\PP_{\mathrm{rw}}(\vv s_1,\vv s_2)=\Pr(\vv S_1=\vv s_1)\Pr(\vv S_2=\vv s_2)=1/4^N, \quad \vv s_1,\vv s_2\in\calS.$$
The {\em two-line ensemble} (TLE)  is  the probability measure $\PP_{\mathrm{TLE}}$  on $\calS\times\calS $,   defined
as follows:
\begin{equation}\label{P-Gibbs}
  \PP_{\mathrm{TLE}}\left(\vv s_1,\vv s_2\right)=\frac{1}{\mathfrak{C}_{\A ,\B }} \; \frac{\B  ^{s_1(N)-s_2(N)}}{(\A \B )^{\min_{0\leq j\leq N} \left\{s_1(j)-s_2(j)\right\}}} \PP_{\mathrm{rw}}(\vv s_1,\vv s_2),
\end{equation}
where $\mathfrak{C}_{\A ,\B }$ is the normalization constant and $\vv s_1,\vv s_2\in \calS\topp N$.
We will write $\EE_{\mathrm{TLE}}$ for the expected value with respect  to $\PP_{\mathrm{TLE}}$.
Of course, $\PP_{\mathrm{TLE}}=\PP_{\mathrm{TLE}}\topp N$ depends on $N$.

The two canonical coordinate mappings $\vv S_1,\vv S_2:\calS\times\calS \to \calS$ given by
\begin{equation}\label{S-coords}
  \vv S_1(\vv s_1,\vv s_2)=\vv s_1, \quad  \vv S_2(\vv s_1,\vv s_2)=\vv s_2,
\end{equation}
give a canonical realization on $(\calS\times\calS ,\PP_{\mathrm{rw}})$ of the pair of independent Bernoulli random walks $\vv S_1,\vv S_2$ and at the same time they give
a realization of the  two-line ensemble on $(\calS\times\calS ,\PP_{\mathrm{TLE}})$.

Our main result is the following TASEP analog of \cite[Theorem 1.3]{barraquand2024stationary}.
\begin{theorem}\label{thm1.1}
  For $\A ,\B>0 $, the marginal law of   random sequence $\vv S_1$ given by \eqref{S-coords}
  under measure  $\PP_{\mathrm{TLE}}$ is the (unique) law of the steady state height function
  $\vv H_N$ of the TASEP with parameters $\alpha,\beta$ given by \eqref{AC=..}.
   That is, for $\vv s \in \calS$,
  \begin{equation}\label{GE2H}
    \PP_{\mathrm{H}}(\vv s)=\sum_{\vv s'\in  \calS}    \PP_{\mathrm{TLE}}(\vv s,\vv s').
  \end{equation}

\end{theorem}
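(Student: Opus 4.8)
The plan is to pass through the matrix product ansatz of Derrida--Evans--Hakim--Pasquier \cite{Derrida-DEHP-1993} in an explicit bidiagonal representation, and to match its lattice-path expansion against a direct computation of the marginal $\sum_{\vv s'}\PP_{\mathrm{TLE}}(\vv s,\vv s')$. Fix the representation: let $D$ be the infinite matrix indexed by $\{0,1,2,\dots\}$ with $1$'s on the main diagonal and the first super-diagonal, and let $E=D^{\mathrm{T}}$. A one-line check gives $DE=D+E$, and the geometric vectors $\langle W|=(1,\A,\A^2,\dots)$, $|V\rangle=(1,\B,\B^2,\dots)^{\mathrm{T}}$ satisfy $\langle W|E=(1+\A)\langle W|=\tfrac1\alpha\langle W|$ and $D|V\rangle=(1+\B)|V\rangle=\tfrac1\beta|V\rangle$ by \eqref{AC=..}. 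When $\A\B<1$ (equivalently $\alpha+\beta>1$) all the relevant pairings converge absolutely --- the nonzero terms keep consecutive matrix indices within distance $1$, hence the endpoints within distance $N$, and $\A\B<1$ --- so in particular $Z_N:=\langle W|(D+E)^N|V\rangle<\infty$ and the ansatz applies. Expanding the matrix products over intermediate indices (a factor $D$ lets the index stay or increase by $1$, a factor $E$ lets it stay or decrease by $1$, while $W_{y_0}=\A^{y_0}$ and $V_{y_N}=\B^{y_N}$) gives, for $\vv s\in\calS$ with increments $\tau_j:=s(j)-s(j-1)$,
\begin{equation*}
Z_N\,\PP_{\mathrm{H}}(\vv s)=\langle W|\prod_{j=1}^N\big(\tau_j D+(1-\tau_j)E\big)|V\rangle=\sum_{\vv y}\A^{y_0}\B^{y_N},
\end{equation*}
the sum over $\vv y=(y_0,\dots,y_N)$ with $y_j\ge 0$, $y_j-y_{j-1}\in\{0,1\}$ when $\tau_j=1$, and $y_j-y_{j-1}\in\{0,-1\}$ when $\tau_j=0$.

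Next I compute the top-line marginal of $\PP_{\mathrm{TLE}}$ by hand. Fix $\vv s\in\calS$ with increments $\vv\tau$. For $\vv s'\in\calS$ set $z_j:=s(j)-s'(j)$ (so $z_0=0$), $m:=\min_{0\le j\le N}z_j\le 0$, and $y_j:=z_j-m\ge 0$; then $\min_j y_j=0$, $y_0=-m$, the weight in \eqref{P-Gibbs} equals $\B^{z_N}(\A\B)^{-m}=\B^{y_N+m}\A^{-m}\B^{-m}=\A^{-m}\B^{y_N}=\A^{y_0}\B^{y_N}$, and $\vv y$ has exactly the increment pattern above. The assignment $\vv s'\mapsto\vv y$ is a bijection of $\calS$ onto $\{\vv y:\ y_j\ge 0,\ \min_j y_j=0,\ \text{increments compatible with }\vv\tau\}$, with inverse $s'(j):=s(j)-y_j+y_0$ (one checks $s'(0)=0$ and $s'(j)-s'(j-1)=\tau_j-(y_j-y_{j-1})\in\{0,1\}$). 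Hence, setting $w_{\A,\B}(\vv s):=\sum_{\vv y:\ \min_j y_j=0}\A^{y_0}\B^{y_N}$ --- a polynomial in $\A,\B$, since such $\vv y$ satisfy $0\le y_j\le N$ --- we obtain $\sum_{\vv s'}\PP_{\mathrm{TLE}}(\vv s,\vv s')=w_{\A,\B}(\vv s)/(4^N\mathfrak{C}_{\A,\B})$, with $4^N\mathfrak{C}_{\A,\B}=\sum_{\vv s'}w_{\A,\B}(\vv s')$.

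Now I match the two and remove the hypothesis $\A\B<1$. When $\A\B<1$, summing a geometric series over the common shift $m'=\min_j y_j$ turns the first sum into the pinned one, $\sum_{\vv y:\ y_j\ge 0}\A^{y_0}\B^{y_N}=(1-\A\B)^{-1}w_{\A,\B}(\vv s)$, so $Z_N(1-\A\B)\,\PP_{\mathrm{H}}(\vv s)=w_{\A,\B}(\vv s)$ for every $\vv s$; summing over $\vv s\in\calS$ identifies $Z_N(1-\A\B)=\sum_{\vv s'}w_{\A,\B}(\vv s')=4^N\mathfrak{C}_{\A,\B}$, so \eqref{GE2H} holds when $\A\B<1$. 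For each fixed $N$, both sides of \eqref{GE2H} are rational functions of $(\A,\B)$ with no poles on the connected open set $\{\A,\B>0\}$ --- the left side because the invariant law of a finite irreducible Markov chain is a ratio of polynomials in its rates and $\alpha,\beta$ are rational in $\A,\B$, the right side because $w_{\A,\B}$ and $\mathfrak{C}_{\A,\B}$ are polynomials and $\mathfrak{C}_{\A,\B}>0$ there --- so agreement on the nonempty open subset $\{\A\B<1\}$ forces agreement throughout $\{\A,\B>0\}$.

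I expect the crux to be this middle step together with the resummation: recognizing that the factor $(\A\B)^{-\min_j\{s_1(j)-s_2(j)\}}$ in \eqref{P-Gibbs} is exactly what recasts the second walk $\vv s_2$ as a nonnegative path pinned to height $0$ at its minimum, whose endpoint weight $\A^{y_0}\B^{y_N}$ reproduces the DEHP boundary vectors, and that the discrepancy between ``all paths $\ge 0$'' (DEHP) and ``paths pinned at $0$'' (the two lines) amounts to the single scalar $1/(1-\A\B)$, which cancels in the normalization. A variant that bypasses the analytic-continuation step would verify directly that the normalized $w_{\A,\B}$ solves the stationarity equations of the open TASEP height function for all $\A,\B>0$; the route above seems shorter.
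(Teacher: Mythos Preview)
Your argument is correct, and it follows a genuinely different route from the paper. The paper never writes down an explicit DEHP representation; instead it extracts from the matrix ansatz the \emph{basic weight equations} \eqref{p1}--\eqref{p10}, which uniquely determine the unnormalized stationary weights $p_N$ recursively in $N$, and then shows by a direct inductive calculation that the marginal weight $f_N(\vv\tau)=\sum_{\vv\xi}\hat g_N(\vv\tau,\vv\xi)$ obeys the same recursion with the same initial data. Your approach, by contrast, fixes a concrete bidiagonal representation, expands the matrix product as a sum over nonnegative lattice paths $\vv y$, and identifies the two-line weight \eqref{P-Gibbs} as exactly the contribution of paths pinned at height $0$; the leftover geometric factor $1/(1-\A\B)$ is then absorbed by the normalization, and the case $\A\B\ge 1$ is handled by rational/analytic continuation. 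What each buys: the paper's induction works uniformly for all $\A,\B>0$ in one stroke and needs no convergence or continuation arguments, while your argument is more conceptual --- it explains precisely why the factor $(\A\B)^{-\min_j\{s_1(j)-s_2(j)\}}$ appears (it shifts the auxiliary walk to be nonnegative and pinned) and how the endpoint weights $\A^{y_0}\B^{y_N}$ reproduce the DEHP boundary vectors. Your closing remark about ``a variant that bypasses the analytic-continuation step'' by checking the stationarity equations directly is, up to packaging, exactly what the paper does.
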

\begin{remark}
  As pointed out to us by an anonymous reviewer, Theorem \ref{thm1.1} in fact  holds for $\A,\B\geq 0$, i.e., for  $\alpha,\beta\in(0,1]$. The only change that is required is to rewrite  \eqref{P-Gibbs} as an expression in $\A,\B$ with non-negative exponents:
  $$\frac{1}{\mathfrak{C}_{\A ,\B }} \;  \B  ^{s_1(N)-s_2(N)-\min_{0\leq j\leq N} \left\{s_1(j)-s_2(j)\right\}}\A ^{\max_{0\leq j\leq N} \left\{s_2(j)-s_1(j)\right\}} \PP_{\mathrm{rw}}(\vv s_1,\vv s_2).$$
  Then the identities  that we  establish in the proof for $\A,\B>0$ extend to $\A,\B\geq 0$ by continuity.
\end{remark}

The proof of Theorem \ref{thm1.1} appears in Section \ref{Sec:proofT11}. In Section \ref{Sec:Appl} we give two applications which show how
Theorem \ref{thm1.1} allows to deduce asymptotic of the height function of TASEP  from well known
 asymptotic properties of random walks.
In Theorem \ref{Thm2.1} we use Theorem \ref{thm1.1} and Donsker's theorem to obtain convergence of the fluctuations of TASEP to the process conjectured to be a stationary measure of a KPZ fixed point on an interval in the full range of parameters. To our knowledge, previously available results of this form, see \cite[Theorem 1.5]{Bryc-Wang-Wesolowski-2022}, required that the sum $\U+\V$ of the parameters
in \eqref{uv2ab} be non-negative. (On the other hand, the more general five-parameter ASEP was covered.)
In Theorem \ref{Cor:LDP4H}  we show that in  the case of TASEP  the large deviation principle
for the height function is a consequence of Theorem \ref{thm1.1},  Mogulskii's theorem, and the contraction principle.
The large deviation principle  for the height function of a more general ASEP has been
analyzed in \cite{derrida2003-Exact-LDP}, but besides the simplicity of the proof,
a slight novelty here is the unified proof and an expression for the rate function, which works for all
$\alpha,\beta\in(0,1)$, i.e., for all $\A,\B>0$, in the so called fan region  $\A\B<1$ and in the shock region  $\A\B>1$.
Since the relation of our rate function to
 formulas \cite[(1.7), (3.3), and (1.11)]{derrida2003-Exact-LDP}
  is not obvious, we discuss this topic in Section \ref{Sect:old-LDP}.

\section{Proof of Theorem \ref{thm1.1}}\label{Sec:proofT11}
For $\A\B<1$, Theorem \ref{thm1.1} can be deduced from \cite[Section 2.3]{Barraquand2023Motzkin} used with $q=0$. The general case can then be obtained by analytic  continuation as discussed in \cite[Remark 1.9]{barraquand2024stationary}.
 Our more direct proof is based on induction on the size $N=1,2,\dots$ of the system and relies on a recursion for the invariant probabilities.
 Recursions for the invariant probabilities of a more general open asymmetric simple exclusion process (ASEP) appear in
\cite{Liggett-1975},   \cite{Derrida-Domany-Mujamel-1992}, and \cite{Derrida-Evans-1993}. Here, we will use a recursion that arises directly from the celebrated matrix method developed in \cite{derrida93exact}.
This recursion appears under the name {\em basic weight equations} in \cite[Theorem 1]{brak2006combinatorial} and it has already been used for similar purposes in \cite[Section 2.2]{nestoridi2023approximating}.
Specified to TASEP, the basic weight equations say that the unique stationary measure of TASEP under reparameterization \eqref{AC=..}
is given by
\begin{equation}
  \label{P:TASEP}
  \PP_{\mathrm{TASEP}}(\vv \tau)=\frac{1}{\mathsf{Z}_{\A,\B}} p_N(\vv \tau), \quad \vv \tau\in \Omega\topp{ N},
\end{equation}
where $\{ p_N(\vv \tau)\}$ satisfy
 the recursion that determines the un-normalized steady state weights $p_N$ uniquely
 in terms of the steady state weights $p_{N-1}$ for a TASEP on $\Omega\topp{N-1}$.
With the initial conditions
 \begin{equation}\label{p1}
    p_1(0)=  1+\A ,\quad p_1(1)= 1+\B,
  \end{equation} for $N\geq 2$ the recursion is:
  \begin{equation}\label{p0tau}
    p_N(0,\tau_2,\dots,\tau_N)=  (1+\A)p_{N-1}(\tau_2,\dots,\tau_N),
  \end{equation}
   \begin{equation}\label{ptau1}
    p_N(\tau_1,\dots,\tau_{N-1},1)=  (1+\B ) p_{N-1}(\tau_1,\dots,\tau_{N-1}).
  \end{equation}
  \begin{multline}
    \label{p10}
     p_N(\tau_1,\dots,\tau_{n-1},1,0,\tau_{n+2},\dots,\tau_N) =
     p_{N-1}(\tau_1,\dots,\tau_{n-1},1,\tau_{n+2},\dots,\tau_N)
     \\ +p_{N-1}(\tau_1,\dots,\tau_{n-1},0,\tau_{n+2},\dots,\tau_N),
  \end{multline}
$1\leq n\leq N-1$.

\arxiv{For completeness we
 re-derive the recursion  directly from the matrix method.
 The matrix method appropriate for the TASEP with parameters $\alpha,\beta\in(0,1)$ is given in terms of a pair of bidiagonal matrices and a pair of vectors \cite[(36), (37)]{derrida93exact} which in our notation become
$$
\mathbf{D}=\begin{bmatrix}
1+\B  & \sqrt{1-\A \B} & 0 & 0 & \dots\\
0 & 1 & 1 & 0 &\dots &\\
0& 0&1 & 1 & \dots &\\
\vdots & & &\ddots & \ddots \\
\end{bmatrix}, \quad
\mathbf{E}=\begin{bmatrix}
1+\A & 0 & 0 & 0 & \dots\\
\sqrt{1-\A \B} & 1 &  0 &\dots &\\
 0&1 & 1 &  0 &\dots &\\
 0& 0&1 & 1 &   \dots \\
\vdots  & & &\ddots & \ddots \\
\end{bmatrix}
$$
$ W=[1,0, 0,\dots ]$, $V=W^T$.
Here $\sqrt{1-\A  \B}$ should be interpreted as the imaginary number $i \sqrt{\A  \B-1}$ when $\A \B>1$.
  A  calculation verifies that the following commutation and eigenvalue properties    hold:
\begin{equation}
  \label{DEHP}
  \mathbf{D}\mathbf{E}= \mathbf{D}+\mathbf{E},\quad
  \langle W |\mathbf E= (1+\A)\langle W |, \quad \mathbf{D}|V\rangle=(1+\B )|V\rangle.
\end{equation}
It is then known, see \cite{derrida93exact}, that
\begin{equation}
  \label{DEHP-1}
  p_N(\tau_1,\dots,\tau_N):=\langle W | \prod_{j=1}^N(\tau_j\mathbf{D}+(1-\tau_j)\mathbf{E})|V\rangle
\end{equation}
 gives the un-normalized stationary probabilities for the TASEP.
 Although some of the entries of the matrices $\mathbf{E},\mathbf{D}$ may be imaginary,
it is well known that the resulting probabilities are real and strictly positive when $\A ,\B>-1$. This precludes any difficulties with normalization.

 The expression \eqref{DEHP-1} yields the "basic weight equations" as follows.
 The last two relations in \eqref{DEHP} give the initial values \eqref{p1} that allow us to start the recursion.
For $N\geq 2$, $(\tau_1,\dots,\tau_N)\in \Omega\topp{N}$, the last two relations in \eqref{DEHP} give \eqref{p0tau} and \eqref{ptau1}.  For $1\leq n\leq N-1$  the first relation $\mathbf{D}\mathbf{E}= \mathbf{D}+\mathbf{E}$ gives \eqref{p10}
with the natural omission of the empty sequences that arise when $n=1$ or $n=N-1$.
}
\subsection{The key identity and the proof of Theorem \ref{thm1.1}}
We introduce a family of functions  $f_N: \Omega\topp {N}\to (0,\infty)$  that we will use to prove \eqref{GE2H}.
For $\vv \tau=(\tau_1,\dots,\tau_N)\in \Omega\topp {N}$ and $\vv \xi =(\xi_1,\dots,\xi_N)\in \Omega\topp{N}$, let \begin{equation}
  \label{s2tau} s_1(k)=\sum_{j=1}^k \tau_j,\quad s_2(k)=\sum_{j=1}^k \xi_j
\end{equation}
with $s_1(0):=0$ and $s_2(0):=0$. Formula \eqref{s2tau} defines a pair of bijections $\Omega\topp{ N}\to \calS$  and  throughout this proof we will treat
$\vv s_1=\vv s_1(\vv \tau)$ as a function of $\vv \tau$ and $\vv s_2=\vv s_2(\vv \xi)$ as a function of $\vv \xi$.

With the above convention, we introduce
\begin{equation}\label{fN}
 f_N(\vv \tau)= f_N(\tau_1,\dots,\tau_N)=\sum_{(\xi_1,\dots,\xi_N)\in \Omega\topp{N}} \frac{\B ^{s_1(N)-s_2(N)}}{(\A \B)^{\min_{0\leq j \leq N}\{s_1(j)-s_2(j)\}}}.
\end{equation}

Theorem \ref{thm1.1} is a consequence of the following identity:
\begin{lemma}
  \label{Lem:H=mard}
   For $(\tau_1,\dots,\tau_N)\in \Omega\topp{N}$ and $N\geq 1$, we have
  \begin{equation}
    \label{p=f}
   f_N(\tau_1,\dots,\tau_N)=  p_N(\tau_1,\dots,\tau_N),
  \end{equation}
  with $p_N(\vv \tau)$ from representation \eqref{P:TASEP}.
\end{lemma}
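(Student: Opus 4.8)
The plan is to prove \eqref{p=f} by induction on $N$. Since the matrix recursion \eqref{p0tau}, \eqref{ptau1}, \eqref{p10} together with the initial data \eqref{p1} determines $p_N$ from $p_{N-1}$ — every $\vv\tau\in\Omega\topp N$ either starts with $0$, or ends with $1$, or, starting with $1$ and ending with $0$, contains a consecutive block $(\tau_n,\tau_{n+1})=(1,0)$ — it is enough to show that the family $\{f_N\}$ satisfies the very same initial conditions and recursions; then $f_N=p_N$ follows immediately. Throughout it is convenient to pass from the pair of walks to the single \emph{difference walk} $d(j):=s_1(j)-s_2(j)=\sum_{i=1}^{j}(\tau_i-\xi_i)$, $0\le j\le N$, with $d(0)=0$, so that \eqref{fN} reads $f_N(\vv\tau)=\sum_{\vv\xi\in\Omega\topp N}\B^{\,d(N)}(\A\B)^{-\min_{0\le j\le N}d(j)}$; I will write $w(\vv\tau,\vv\xi)$ for this summand.

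The base case $N=1$ is the two–term evaluation giving $f_1(0)=1+\A$, $f_1(1)=1+\B$, matching \eqref{p1}. For the inductive step I would first dispatch \eqref{p0tau} and \eqref{ptau1} by splitting the $\vv\xi$–sum on its first, respectively last, coordinate: when $\tau_1=0$, the term $\xi_1=0$ reproduces the summand of $f_{N-1}(\tau_2,\dots,\tau_N)$ verbatim, while the term $\xi_1=1$ lowers the entire difference walk past index $0$ by one unit, multiplying the summand by $\B^{-1}(\A\B)=\A$, so $f_N(0,\tau_2,\dots,\tau_N)=(1+\A)f_{N-1}(\tau_2,\dots,\tau_N)$; symmetrically, when $\tau_N=1$ the term $\xi_N=0$ contributes an extra factor $\B$ and $\xi_N=1$ leaves the summand alone, giving \eqref{ptau1}.

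The substantive step — and the one I expect to be the main obstacle — is the contraction identity \eqref{p10}. Fix $n$ with $(\tau_n,\tau_{n+1})=(1,0)$, write $\vv\tau'=(\tau_1,\dots,\tau_{n-1},1,\tau_{n+2},\dots,\tau_N)$ and $\vv\tau''=(\tau_1,\dots,\tau_{n-1},0,\tau_{n+2},\dots,\tau_N)$ for $\vv\tau$ with coordinate $n+1$, respectively $n$, deleted, and consider the map $\vv\xi\mapsto\vv\eta\in\Omega\topp{N-1}$ that deletes coordinate $n+1$ of $\vv\xi$. The key computation is that, for such $\vv\tau$ and any $\vv\xi$, this deletion removes from $d$ exactly one vertex: the one at index $n+1$ when $\xi_{n+1}=0$ (then $d(n+1)=d(n)$, a repeated value), and the one at index $n$ when $\xi_{n+1}=1$ (then $d(n)$ is either a strict local maximum, if $\xi_n=0$, or a value repeated at index $n-1$, if $\xi_n=1$); in every case $d(N)$ is unchanged and the removed value is never the unique minimizer, so $\min_j d(j)$ is unchanged. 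Hence $w(\vv\tau,\vv\xi)=w(\vv\tau',\vv\eta)$ when $\xi_{n+1}=0$ and $w(\vv\tau,\vv\xi)=w(\vv\tau'',\vv\eta)$ when $\xi_{n+1}=1$. Since $\vv\xi\mapsto\vv\eta$ is a bijection from $\{\vv\xi:\xi_{n+1}=0\}$ onto $\Omega\topp{N-1}$ and also from $\{\vv\xi:\xi_{n+1}=1\}$ onto $\Omega\topp{N-1}$, splitting the sum for $f_N(\vv\tau)$ according to the value of $\xi_{n+1}$ yields $f_N(\vv\tau)=f_{N-1}(\vv\tau')+f_{N-1}(\vv\tau'')$, i.e. \eqref{p10}. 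With the initial condition and all three recursions verified for $f$, and $f_{N-1}=p_{N-1}$ by the inductive hypothesis, we conclude $f_N=p_N$. The only delicate point is the last italicized claim — that the vertex removed from $d$ is never the sole minimizer — which is where the pattern $(\tau_n,\tau_{n+1})=(1,0)$ and the four possibilities for $(\xi_n,\xi_{n+1})$ must be checked one by one.
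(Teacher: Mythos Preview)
Your proposal is correct and follows essentially the same strategy as the paper: prove \eqref{p=f} by induction on $N$, checking the initial values \eqref{p1} and then each of the three recursions \eqref{p0tau}, \eqref{ptau1}, \eqref{p10} for $f_N$. The only cosmetic difference is in the combinatorics for \eqref{p10}: you split the $\vv\xi$-sum on $\xi_{n+1}$ and argue via the difference walk $d$ that the deleted vertex is never the unique minimizer, whereas the paper splits on $\xi_n$ (moving $1-\xi_n$ into the $\vv s_1$-side) and tracks the two partial sums $\wt{\vv s}_1,\wt{\vv s}_2$ separately; both bijections do the same job.
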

 \begin{proof} It is clear that \eqref{p=f} holds for $N=1$. Indeed, in this case \eqref{fN} is the sum of two terms corresponding to $\xi_1=0$ and $\xi_1=1$:
   $$f_1(0)= \frac{\B^0}{(\A \B)^0} + \frac{\B^{-1}}{(\A \B)^{-1}}= 1+\A,$$
    $$f_1(1)= \frac{\B^1}{(\A \B)^0} + \frac{\B^0}{(\A \B)^{0}} =\B+1, $$
    matching the initial conditions \eqref{p1}.

Next, we show that $f_N$ satisfies the same three recursions as $p_N$  for $N\geq 2$.
Throughout this proof, for $k=0,\dots,N$ we consider partial sums $\vv s_1\in\calS\topp N$
and $\wt {\vv s}_1 \in \calS\topp{N-1}$ that depend on $\tau_1,\dots,\tau_n$ and partial sums $\vv s_2 \in\calS\topp N$ and $\wt{\vv  s_2}\in\calS\topp{N-1}$ that depend on the auxiliary $\{0,1\}$-valued  variables $\xi_1,\dots,\xi_n$ that appear under the sum in \eqref{fN}. In one place in the last part of the proof,
the sequence $\wt{\vv s}_1$ will be an explicit function of both sequences $\vv \tau$ and $\vv \xi$.
We express $f_N$ in terms of $\vv s_1, \vv s_2\in\calS\topp N$ and relate it to $f_{N-1}$ written in terms of $\wt{\vv s}_1, \wt{\vv s}_2\in\calS\topp{N-1}$.

First, we verify that
\begin{equation*}\label{f:0tau}
   f_N(0,\tau_2,\dots,\tau_N)=(1+\A) f_{N-1}(\tau_2,\dots,\tau_N).
\end{equation*}
To see this, we define $\wt s_1(k)=\sum_{j=1}^k \tau_{j+1}$ and $\wt s_2(k)=\sum_{j=1}^k \xi_{j+1}$ so that $s_1(N)=\wt s_1(N-1)$, $s_2(N)=\xi_1+\wt s_2(N-1)$ and
$\min_{0\leq j\leq N} \left\{s_1(j)-s_2(j)\right\}=  -\xi_1+\min_{0\leq j\leq N-1} \left\{\wt s_1(j)-\wt s_2(j)\right\}$. Then \eqref{fN} gives
\begin{multline*}
  f_N(0,\tau_2,\dots,\tau_N)=
  \sum_{(\xi_1,\xi_2,\dots,\xi_N) \in \{0,1\}^{N} }
   \frac{\B^{-\xi_1}}{(\A\B)^{-\xi_1}} \cdot \frac{\B ^{\wt s_1(N-1)-\wt s_2(N-1)}}{(\A \B)^{\min_{0\leq j \leq N-1}\{\wt s_1(j)-\wt s_2(j)\}}}
   \\
  = \sum_{\xi_1\in\{0,1\}} \A^{\xi_1}
  \sum_{(\xi_2,\dots,\xi_N) \in \{0,1\}^{N-1} }
    \frac{\B ^{\wt s_1(N)-\wt s_2(N)}}{(\A \B)^{\min_{0\leq j \leq N-1}\{\wt s_1(j)-\wt s_2(j)\}}}
    \\=(1+\A) f_{N-1}(\tau_2,\dots,\tau_N).
\end{multline*}
Next, by a similar argument we verify that
\begin{equation*}\label{f:tau1}
  f_N(\tau_1,\dots,\tau_{N-1},1)=(1+\B) f_{N-1}(\tau_1,\dots,\tau_{N-1}).
\end{equation*}
In this case, we introduce $\wt s_1(k)=\sum_{j=1}^k \tau_j$ and $\wt s_2(k)=\sum_{j=1}^k \xi_j$ so that
$s_1(N)=\wt s_1(N-1)+1$, $s_2(N)= \wt s_2(N-1)+\xi_N$. We  note that
$$s_1(N)-s_2(N) =\wt s_1(N-1)-\wt s_2(N-1)+1-\xi_N\geq \wt s_1(N-1)-\wt s_2(N-1),$$ so in this case
$\min_{1\leq j\leq N}\{s_1(j)-s_2(j)\}= \min_{1\leq j\leq N-1}\{\wt s_1(j)-\wt s_2(j)\}$. Thus
\eqref{fN} gives
\begin{multline*}
  f_N(\tau_1,\dots,\tau_{N-1},1)=\sum_{\xi_N\in\{0,1\}}
  \sum_{\xi_1,\dots,\xi_{N-1}\in \{0,1\}^{N-1}} \B^{1-\xi_N}\frac{ \B^{\wt s_1(N-1)-\wt s_2(N-1)}}{(\A\B)^{\min _{1\leq j\leq N-1} \{\wt s_1(j)-\wt s_2(j)\}}}
  \\
  =\sum_{\xi_N\in\{0,1\}}
 \B^{1-\xi_N}  \sum_{\xi_1,\dots,\xi_{N-1}\in \{0,1\}^{N-1}} \frac{\B^{\wt s_1(N-1)-\wt s_2(N-1)}}{(\A\B)^{\min _{1\leq j\leq N-1} \{\wt s_1(j)-\wt s_2(j)\}}}
 \\ = (1+\B) f_{N-1}(\tau_1,\dots,\tau_{N-1}).
\end{multline*}
Finally, we verify that for a fixed $1\leq n \leq N-1$ we have
 \begin{multline}
    \label{f10}
     f_N(\tau_1,\dots,\tau_{n-1},1,0,\tau_{n+2},\dots,\tau_N) =
     f_{N-1}(\tau_1,\dots,\tau_{n-1},1,\tau_{n+2},\dots,\tau_N)
     \\ +f_{N-1}(\tau_1,\dots,\tau_{n-1},0,\tau_{n+2},\dots,\tau_N).
  \end{multline}
  Here for $k\leq n-1$ we  let $\wt s_1(k)=s_1(k)$ and $\wt s_2(k)=s_2(k)$. For $n\leq k \leq N-1$ we set
  $\wt s_2(k)=s_2(n-1)+\sum_{j=n}^{k}\xi_{j+1}$, skipping over $\xi_n$. On the other hand, for $n\leq k \leq N-1$
  we let  $\wt s_1(k)=s_1(n-1)+(1-\xi_n)+\sum_{j=n+1}^{k}\tau_{j+1}$.
  (This is one place in the proof where $\wt {\vv s}_1$ depends on both $\vv \tau$ and $\vv \xi$.)
   Note that putting this choice of $\wt {\vv s}_1$ into expression \eqref{fN}
   leads to the formula for  $f_{N-1}(\tau_1,\dots,\tau_{n-1}, 1-\xi_n, \tau_{n+2},\dots,\tau_N)$.

  It is clear that
  \begin{multline}\label{s1-s2}
    s_1(N)-s_2(N)=\left(\sum_{j=1}^{n-1}\tau_j +1+0+\sum_{j=n+2}^N \tau_j\right) - \left(\sum_{j=1}^{n-1} \xi_j +\xi_n +\sum_{j=n+1}^N \xi_j\right)
    \\
    =\left(\sum_{j=1}^{n-1}\tau_j +(1-\xi_n)+\sum_{j=n+1}^{N-1} \tau_{j+1}\right) -\left( \sum_{j=1}^{n-1} \xi_j   +\sum_{j=n}^{N-1} \xi_{j+1}\right)
    =\wt s_1(N-1)-\wt s_2(N-1).
  \end{multline}

  The same calculation shows that
  $s_1(k)-s_2(k) =\wt s_1(k-1)-\wt s_2(k-1) $ for $k=n+1,\dots,N$.
  Since $s_1(k)-s_2(k) =\wt s_1(k)-\wt s_2(k) $ for $0\leq k\leq n-1$,
  and by the same rewrite as in \eqref{s1-s2} we get
  $$s_1(n)-s_2(n)=1-\xi_n+s_1(n-1)-s_2(n-1)\geq s_1(n-1)-s_2(n-1), $$
   we see that $s_1(n)-s_2(n)$ does not contribute to the minimum. This shows that the two minima are the same,
  \begin{multline*}
     \min_{0\leq k \leq N}\{s_1(k)-s_2(k)\} =
     \min_{0\leq k \leq n-1}\{s_1(k)- s_2(k)\} \wedge \min_{n+1\leq k \leq N}\{ s_1(k)- s_2(k)\}
     \\ =\min_{0\leq k \leq n-1}\{\wt s_1(k)- \wt s_2(k)\} \wedge \min_{n+1\leq k \leq N}\{ \wt s_1(k-1)- \wt s_2(k-1)\} = \min_{0\leq k \leq N-1}\{\wt s_1(k)- \wt s_2(k)\}.
  \end{multline*}

Therefore summing in \eqref{fN} over $\xi_1,\dots,\xi_{n-1},\xi_{n+1},\dots,\xi_{N}\in\{0,1\}$ and isolating the sum over $\xi_n\in \{0,1\}$ we get
$$
  f_N(\tau_1,\dots,\tau_{n-1},1,0,\tau_{n+2},\dots,\tau_N) =\sum_{\xi_n\in\{0,1\}}
     f_{N-1}(\tau_1,\dots,\tau_{n-1},1-\xi_n,\tau_{n+2},\dots,\tau_N),
$$
which establishes \eqref{f10}.

Since $f_N(\tau_1,\dots, \tau_N)$ and $p_N(\tau_1,\dots,\tau_N)$ satisfy the same recursion with respect to $N$ and the same initial conditions at $N=1$, this ends the proof.
 \end{proof}

\begin{proof}[Proof of Theorem \ref{thm1.1}]
For $\vv s_1,\vv s_2\in \calS$, denote %
\begin{equation}\label{g(s)}
  g_N(\vv s_1,\vv s_2)= \frac{\B ^{s_1(N)-s_2(N)}}{(\A \B)^{\min_{0\leq j \leq N}\{s_1(j)-s_2(j)\}}},
\end{equation}
and let
$\hat g_N(\vv \tau,\vv \xi)$ denote the same expression treated as a function of $\vv\tau,\vv\xi\in\Omega\topp{N}$ under the bijection \eqref{s2tau}. (That is, we apply \eqref{g(s)} to  $\vv s_1$ which is the height function of $\vv \tau$ and to $\vv s_2$, which is the height function of $\vv \xi$.)
In this notation,
\eqref{fN} becomes
\begin{equation}
  \label{f=sum-h}
  f_N(\vv \tau)=\sum_{\vv \xi\in\Omega\topp{N}} \hat g_N(\vv \tau,\vv \xi).
\end{equation}

Since $\PP_{\mathrm{rw}}$ is a uniform  law on $\calS\times \calS$,   formula \eqref{P-Gibbs} can be written as
\begin{equation*}
  \label{P-Gibbs-2}
  \PP_{\mathrm{TLE}}(\vv s_1,\vv s_2)=\frac{1}{\mathsf Z_{\A,\B}}g_N(\vv s_1,\vv s_2),
\end{equation*}
where $\mathsf Z_{\A,\B}$ is the same normalizing constant as in \eqref{P:TASEP}. Indeed, the normalizing constant is
\begin{equation*}
  \sum_{\vv s_1,\vv s_2\in \calS} g_N(\vv s_1,\vv s_2) = \sum_{\vv \tau,\vv \xi \in \Omega_{T}}
 \hat g_N(\vv \tau,\vv \xi)=  \sum_{\vv \tau \in \Omega_{T}} f_N(\vv \tau)=
 \sum_{\vv \tau \in \Omega_{T}} p_N(\vv \tau)=\mathsf Z_{\A,\B},
\end{equation*}
where we used \eqref{f=sum-h} and Lemma \ref{Lem:H=mard}.

Thus, writing $\vv \tau=\vv \tau(\vv s_1)$ and $\vv \xi=\vv \xi(\vv s_2)$ for the inverse of bijection \eqref{s2tau}, we have
\begin{multline*}
\sum_{\vv s_2\in \calS}  \PP_{\mathrm{TLE}}(\vv s_1,\vv s_2)=
\frac{1}{\mathsf Z_{\A,\B}}\sum_{\vv s_2\in \calS}g_N(\vv s_1,\vv s_2)
=\frac{1}{\mathsf Z_{\A,\B}}\sum_{\vv \xi\in \Omega}\hat g_N(\vv \tau(\vv s_1),\vv \xi)
\\
=\frac{1}{\mathsf Z_{\A,\B}}f_N(\vv \tau(\vv s_1))
=\frac{1}{\mathsf Z_{\A,\B}}p_N(\vv \tau(\vv s_1)) = \PP_{\mathrm{TASEP}}(\vv \tau(\vv s_1))
=\PP_{\mathrm{H}}(\vv s_1),
\end{multline*}
proving \eqref{GE2H}.
\end{proof}
\section{Applications}\label{Sec:Appl}
 In this section, we use Theorem \ref{thm1.1} to refine and extend existing results concerning the asymptotic behavior of the height function in steady state. Our two theorems draw inspiration and build upon the foundational works of \cite{derrida04asymmetric} where non-Gaussian fluctuations were identified, and \cite{derrida2003-Exact-LDP} which described the rate function for large deviations.

\subsection{Stationary measure of the conjectural KPZ fixed point on a segment}

Barraquand and Le Doussal \cite{barraquand2022steady}  introduced process
$B+X$, where $B,X$ are independent processes, $B$ is the Brownian motion of  variance $1/2$, and the law of $X$ is given by the Radon-Nikodym derivative
\begin{equation}\label{RN-B}
  \frac{d\PP_X}{d\PP_B}(\omega)=\frac{1}{\mathfrak K(\U,\V)} \exp \left((\U+\V)\min_{0\leq x\leq 1}\omega(x)-\V \omega(1)\right), \quad \omega\in C[0,1].
\end{equation}
They proposed this process as the stationary measure of the conjectural KPZ fixed point on the interval $[0,1]$ with boundary parameters $\U,\V\in\RR$, and predicted that the process $B+X$ should arise as a scaling limit of stationary measures of all models in the KPZ universality
class on an interval.

 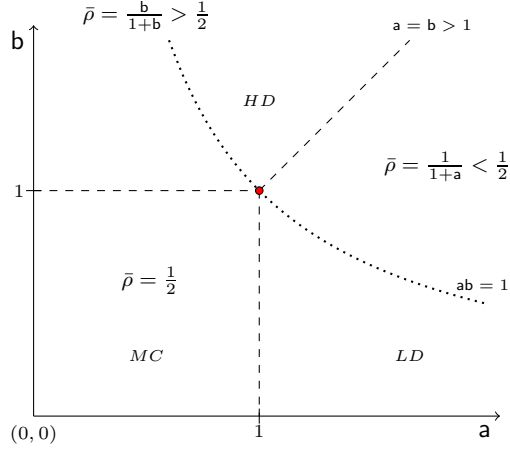
\begin{figure}[hbt]
  \begin{tikzpicture}[scale=1.0]
 \draw[->] (5,5) to (5,10.2);
 \draw[->] (5.,5) to (11.2,5);
   \draw[-, dashed] (5,8) to (8,8);
   \draw[-, dashed] (8,8) to (8,5);
   \draw[-, dashed] (8,8) to (10,10);
   \node [left] at (5,8) {\scriptsize$1$};
   \node[below] at (8,5) {\scriptsize $1$};
     \node [below] at (11,5) {$\A$};
   \node [left] at (5,10) {$\B$};

\draw[scale = 1,domain=6.8:11,smooth,variable=\x,dotted,thick] plot ({\x},{1/((\x-7)*1/3+2/3)*3+5});

\node[right] at (10.5,6.75) {\tiny $\A\B=1$};

\node[above] at (10.3,10) {\tiny $\A=\B>1$}; %

  \draw[-] (8,4.9) to (8,5.1);
   \draw[-] (4.9,8) to (5.1,8);

 \node [below] at (5,5) {\scriptsize$(0,0)$};
    \node[above] at (8,9) {\tiny $HD$};
    \node [below] at (10,6) {\tiny $LD$}; %

 \node [below] at (6.5,6) {\tiny $MC$};%
 \node [above] at (6.5,6.5) {\footnotesize { $\bar\rho=\tfrac12$}};%

\node [above] at (6.5,10) {\footnotesize  {$\bar\rho=\tfrac{\B}{1+\B}>\frac12$}};%

\node [above] at (10.5,8) {\footnotesize {$\bar\rho=\tfrac{1}{1+\A}<\frac12$}};%

\draw [fill=red] (8,8) circle [radius=0.05];
\end{tikzpicture}
\caption{
For fixed $\A,\B$, limiting particle density
$\bar \rho:=\displaystyle \lim_{N\to\infty}\tfrac1N(\tau_1+\dots+\tau_N)$ varies by region of the
phase        diagram for the open TASEP.
   These are the maximal current region, marked as $MC$,   the low density region, marked as $LD$,
and the high density region, marked as $HD$. Hyperbola $\A\B=1$ separates the {\em fan region} from the {\em shock region} $\A\B>1$.
 (These regions were identified in \cite[Fig. 3]{Derrida-Domany-Mujamel-1992}.) Parameters $\A_N,\B_N$ of the TASEP in Theorem \ref{Thm2.1} vary with system size $N$ and converge   to the  {\em  triple point} $(\A,\B)=(1,1)$, where  the  regions $MC$, $LD$, and $HD$  meet.
\label{Fig2}}
\end{figure}

This prediction is
supported by the limit theorem established in \cite{Bryc-Wang-Wesolowski-2022}, which demonstrated that the process $B+X$ describes the asymptotic behavior of the fluctuations of the height function of ASEP with parameters that vary with the size of the system according to formula \eqref{uv2ab} under condition $\U+\V>0$.
In this paper, we extend the above limit theorem to all real values of $\U$ and $\V$, removing the positivity constraint $\U+\V>0$ for TASEP. This, to our knowledge, constitutes the first confirmation of the prediction in \cite{barraquand2022steady} that encompasses the entire range of real parameters $\U$ and $\V$. A recent preprint  \cite{wang2024asymmetric} uses our result in combination with analytic techniques to extend Theorem \ref{Thm2.1} to the general ASEP.

  Recall that the height function is defined by \eqref{HN}.
\begin{theorem} \label{Thm2.1}
Consider a sequence of TASEPs indexed by $N$ on the segments $\{1,\dots,N\}$ for all $N\in\NN$, with parameters $\alpha=\alpha(N)\to 1/2$, $\beta=\beta(N)\to 1/2$ as $N\to \infty$ at the rates given by relation \eqref{AC=..} with
\begin{equation}\label{uv2ab}
  \A=\A_N=e^{-\U/\sqrt{N}},\quad \B=\B_N=e^{-\V/\sqrt{N}}.
\end{equation}

   Then
   \begin{equation*}
     \left\{\frac{1}{\sqrt{N}}\left( 2 H_N(\floor{x N})-\floor{N x}\right)\right\}_{x\in[0,1]}\Rightarrow \{B_x+X_x\}_{x\in[0,1]} \mbox{ as $N\to\infty$},
   \end{equation*}
   where the convergence is in Skorokhod's space $D[0,1]$ of c\`adl\`ag functions, processes $B,X$ are independent processes on $[0,1]$  with continuous trajectories, $B$ is a Brownian motion of variance $1/2$, and the law of $X$ is given by \eqref{RN-B}.
\end{theorem}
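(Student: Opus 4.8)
\textbf{Proof plan for Theorem \ref{Thm2.1}.}
The plan is to combine Theorem \ref{thm1.1} with Donsker's invariance principle and an absolute-continuity (change-of-measure) argument. By Theorem \ref{thm1.1}, the rescaled height function $\tfrac{1}{\sqrt N}(2H_N(\floor{xN})-\floor{Nx})$ has the same law as the top line $\vv S_1$ of the two-line ensemble $\PP_{\mathrm{TLE}}\topp N$, after centering and rescaling. Write $Y_N(x):=\tfrac{1}{\sqrt N}(2S_1(\floor{xN})-\floor{Nx})$ and $Z_N(x):=\tfrac{1}{\sqrt N}(2S_2(\floor{xN})-\floor{Nx})$ for the two rescaled walks. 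Under $\PP_{\mathrm{rw}}$ these are two independent centered random walks with increments $\pm1$ with probability $1/2$ divided by $\sqrt N$, so by Donsker $(Y_N,Z_N)\Rightarrow(\sqrt2\,B^{(1)},\sqrt2\,B^{(2)})$ in $D[0,1]^2$, where $B^{(1)},B^{(2)}$ are independent standard Brownian motions; equivalently the limit is $(B,B')$ with $B,B'$ independent Brownian motions of variance $1/2$ matched to the normalization in \eqref{RN-B}. The first step is therefore to record this joint Donsker limit under $\PP_{\mathrm{rw}}$.

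The second step is to rewrite the Radon-Nikodym derivative \eqref{P-Gibbs} in the rescaled coordinates. Since $s_1(N)-s_2(N)=\tfrac{\sqrt N}{2}(Y_N(1)-Z_N(1))$ and $\min_{0\le j\le N}\{s_1(j)-s_2(j)\}=\tfrac{\sqrt N}{2}\min_{0\le x\le 1}(Y_N(x)-Z_N(x))$, and since $\log\B_N=-\V/\sqrt N$, $\log(\A_N\B_N)=-(\U+\V)/\sqrt N$ by \eqref{uv2ab}, the density becomes
\begin{equation*}
\frac{d\PP_{\mathrm{TLE}}\topp N}{d\PP_{\mathrm{rw}}}(Y_N,Z_N)=\frac{1}{\mathfrak C_{\A_N,\B_N}}\exp\left(\tfrac12(\U+\V)\min_{0\le x\le1}(Y_N(x)-Z_N(x))-\tfrac12\V(Y_N(1)-Z_N(1))\right),
\end{equation*}
i.e.\ a bounded-exponent functional of $(Y_N,Z_N)$ that is continuous in the Skorokhod (indeed uniform) topology. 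I would then apply the continuous mapping theorem to pass this functional to the limit along with $(Y_N,Z_N)$, and verify uniform integrability of the densities so that the normalizing constants $\mathfrak C_{\A_N,\B_N}$ converge to $\mathfrak K$ (the normalizer in \eqref{RN-B}, after the factor-of-$2$ reparametrization) and the tilted laws converge: $(Y_N,Z_N)$ under $\PP_{\mathrm{TLE}}\topp N$ converges to $(W_1,W_2)$ where $W_2=B'$ is a variance-$1/2$ Brownian motion and, conditionally on $W_2$, $W_1$ has the law whose density with respect to that of $B'+$(independent variance-$1/2$ BM) is proportional to $\exp(\tfrac12(\U+\V)\min_x(W_1(x)-W_2(x))-\tfrac12\V(W_1(1)-W_2(1)))$. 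The third step is to read off the marginal of $W_1=\vv S_1$-limit: writing $W_1=W_2+D$ with $D$ the "difference" process, the density depends only on $D$, so $W_2$ stays an unconditioned variance-$1/2$ Brownian motion independent of $D$, and $D$ acquires the tilt $\exp(\tfrac12(\U+\V)\min_x D(x)-\tfrac12\V D(1))$ relative to a variance-$1$ Brownian motion; matching with \eqref{RN-B} after rescaling $D$ by $\tfrac12$ identifies $W_1\Rightarrow B+X$ exactly as claimed. Here I would use that a variance-$1/2$ plus an independent variance-$1/2$ Brownian motion is a variance-$1$ Brownian motion, and that the additive decomposition $B+X$ with $B$ of variance $1/2$ and $X$ given by \eqref{RN-B} is stable under this splitting.

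The main obstacle is the change-of-measure limit: one needs the Radon-Nikodym derivatives $\tfrac{d\PP_{\mathrm{TLE}}\topp N}{d\PP_{\mathrm{rw}}}$, viewed as functions on $D[0,1]^2$, to converge \emph{together with} the underlying processes, which requires (i) that the functional $(\omega_1,\omega_2)\mapsto\tfrac12(\U+\V)\min_x(\omega_1-\omega_2)(x)-\tfrac12\V(\omega_1-\omega_2)(1)$ be continuous at the limit law (true, since the limit lives on $C[0,1]^2$ and $\min$ and evaluation at $1$ are continuous there), and (ii) uniform integrability of $\exp$ of this functional under $\PP_{\mathrm{rw}}$, so that Scheff\'e/Portmanteau gives convergence of the tilted laws and of the normalizers. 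Item (ii) follows from Gaussian tail bounds (the exponent grows at most linearly in $\sup|\omega_i|$, whose rescaled-random-walk moments are bounded uniformly in $N$, e.g.\ by Doob's inequality and $L^p$ bounds), but the bookkeeping of the constants and the factor $2$ between the TASEP-height normalization and the Brownian normalization in \eqref{RN-B} must be done carefully. A secondary technical point is that convergence takes place a priori in $D[0,1]$; since the limit is continuous, Skorokhod convergence upgrades to uniform convergence on compacts, which is what the continuity of the tilt functional needs.
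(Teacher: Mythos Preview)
Your overall strategy---Donsker under $\PP_{\mathrm{rw}}$, rewrite the Radon--Nikodym derivative \eqref{P-Gibbs} in rescaled coordinates, pass to the limit using continuity of the tilt and uniform integrability---is exactly the paper's. The gap is in the third step, where you extract the marginal of $W_1$.

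Your decomposition $W_1=W_2+D$ together with the claim ``$W_2$ stays an unconditioned \ldots Brownian motion independent of $D$'' is false: if $W_1,W_2$ are independent Brownian motions then $W_2$ and $D=W_1-W_2$ are \emph{negatively correlated}, so a tilt depending on $D$ does change the law of $W_2$. Consequently $W_1=W_2+D$ does not split as ``untilted $+$ tilted'' in the way you want, and the identification with $B+X$ does not follow from this. The paper sidesteps this by changing coordinates \emph{before} taking the limit: set $\vv W_\pm^{(N)}=\tfrac12(Y_N\pm Z_N)$. Then $S_1(j)-S_2(j)=\sqrt N\,\vv W_-^{(N)}(j/N)$, so the tilt becomes exactly $\exp\big((\U+\V)\min_x \vv W_-^{(N)}(x)-\V\,\vv W_-^{(N)}(1)\big)$, with no stray factor $\tfrac12$. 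Under $\PP_{\mathrm{rw}}$, Donsker gives $(\vv W_+^{(N)},\vv W_-^{(N)})\Rightarrow(B,B')$ with $B,B'$ \emph{independent} Brownian motions of variance $1/2$ (orthogonal Gaussian combinations). Since the tilt depends only on $\vv W_-^{(N)}$, uniform integrability yields $(\vv W_+^{(N)},\vv W_-^{(N)})\Rightarrow(B,X)$ under $\PP_{\mathrm{TLE}}$, and $Y_N=\vv W_+^{(N)}+\vv W_-^{(N)}\Rightarrow B+X$.

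There is also a variance slip earlier: the increments of $2S_1-\lfloor\cdot\rfloor$ are $\pm1$, so $Y_N,Z_N$ converge to \emph{standard} (variance-$1$) Brownian motions, not to $\sqrt2\,B^{(i)}$ nor to variance-$1/2$ motions; the variance $1/2$ appears only after passing to the half-sum and half-difference. Once you make that coordinate change, the factor-of-$2$ bookkeeping you worry about disappears and the exponent matches \eqref{RN-B} on the nose.
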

We remark that a linear interpolation of the height function similar to the one that appears in  Theorem \ref{Cor:LDP4H} leads to the same conclusion under weak convergence in the space $C[0,1]$ with the supremum norm.
\begin{proof} Consider the coordinate processes
$\vv S_1,\vv S_2$ defined  on the probability space $(\calS\topp N\times\calS\topp N ,\PP_{\mathrm{TLE}})$ by \eqref{S-coords}.
We shall determine the limit of the process
\begin{equation}
  \label{S1-scaled}
 \vv W_1\topp N:= \frac{1}{\sqrt{N}}\left\{ 2 S_1(\floor{x N})-\floor{x N}\right\}_{x\in[0,1]}.
\end{equation}
We will establish a more general claim that under measure $\PP_{\mathrm{TLE}}$, we have joint convergence of the pair of processes:
\begin{multline}\label{2d-conv}
 \frac{1}{\sqrt{N}}\left( \{S_1(\floor{x N})+S_2(\floor{x N})-\floor{x N}\}_{x\in[0,1]}, \{S_1(\floor{x N})-S_2(\floor{x N})\}_{x\in[0,1]}\right)
 \\ \Rightarrow \left(\{B_x\}_{x\in[0,1]},\{X_x\}_{x\in[0,1]}\right),
\end{multline}
where $B,X$ are independent processes from the conclusion of the theorem.  Noting  that the process
\eqref{S1-scaled} is the sum of the processes on the left hand side of
\eqref{2d-conv}, this will end the proof.

   Fix a bounded continuous function %
   $\Phi:D([0,1];\RR^2)\to \RR$
   and write
   $$\vv W_\pm \topp N:=\tfrac12(\vv W_1\topp {N}\pm \vv W_2\topp N)$$ for the two processes on the left hand side of
\eqref{2d-conv}, where $\vv W_2\topp N$ is defined as in \eqref{S1-scaled} with $\vv S_2$ in place of $\vv S_1$.
Our goal is to prove that
\begin{equation}
  \label{goal0}
  \lim_{N\to\infty} \EE_{\mathrm {TLE}} \left[\Phi(\vv W_+\topp N,\vv W_-\topp N)\right]= \EE [\Phi(B,X)].
\end{equation}

Following the approach in \cite[ Proposition 1.3]{Bryc-Wang-2023b}, we use Donsker's theorem.
Since $\Var_{\mathrm {rw}}(S_1(N))=N/4$, by Donsker's theorem, under probability measure $\PP_{\mathrm {rw}}$, we have
$$
 (\vv W_1\topp N, \vv W_2\topp N) \Rightarrow  (W_1,W_2),   \quad \mbox{as $N\to\infty$},
$$
where $W_1,W_2$ are independent Wiener processes and convergence is in $D([0,1];\RR^2)$. Thus
$$(\vv W_+\topp N,\vv W_-\topp N)\Rightarrow (B,B'):=(W_1+W_2,W_1-W_2)/2,$$
where $B,B'$ are independent Brownian motions of variance $1/2$.
Using the Radon-Nikodym density \eqref{P-Gibbs} and \eqref{uv2ab}, we get
\begin{multline*}%
  \EE_{\mathrm {TLE}} \left[\Phi(\vv W_+\topp N,\vv W_-\topp N)\right]
  = \frac{1}{\mathsf{Z}_{\U,\V}(N)}
  \EE_{\mathrm {rw}} \Big[\Phi(\vv W_+\topp N,\vv W_-\topp N)
  \\
  \times \exp\left({\frac{\U+\V}{\sqrt{N}}\min_{0\leq j\leq N}(S_1(j)-S_2(j))-\frac{\V}{\sqrt{N}}(S_1(N)-S_2(N))}\right)
  \Big]
  \\=\frac{1}{\mathsf{Z}_{\U,\V}(N)}
  \EE_{\mathrm {rw}} \left[\Phi(\vv W_+\topp N,\vv W_-\topp N) e^{
  (\U+\V)\min_{0\leq x\leq 1}\vv W_-\topp N(x)-\V \vv W_-\topp N(1)}
  \right]
  \\=
  \frac{1}{\mathsf{Z}_{\U,\V}(N)}
  \EE_{\mathrm {rw}} \left[\Phi(\vv W_+\topp N,\vv W_-\topp N) \mathcal{E}(\vv W_-\topp N) \right],
\end{multline*}
where
$$
\mathcal{E}(f)=e^{(\U+\V)\min_{0\leq x \leq 1}f(x)-\V f(1)}.
$$
Noting that $\Phi$ is bounded and by \eqref{Levy-Ott} and Remark \ref{Rem:howtouse}
$$\sup_N \EE_{\mathrm{rw}}\left[\mathcal{E}^2(\vv W_-\topp N)\right]<\infty,$$
 we see that the sequence of real valued random variables
  $$\left\{\Phi(\vv W_+\topp N,\vv W_-\topp N) \mathcal{E}(\vv W_-\topp N)\right\}_{ N=1,2,\dots}$$
is uniformly integrable with respect to $\PP_{\mathrm{rw}}$.
 Uniform integrability and weak convergence imply convergence of expectations (\cite[Theorem 3.5]{billingsley99convergence}),
 so Donsker's theorem implies that
 $$
 \lim_{N\to\infty} \EE_{\mathrm {rw}} \left[\Phi(\vv W_+\topp N,\vv W_-\topp N)
  \mathcal{E}(\vv W_-\topp N) \right]=\EE_{B,B'} \left[\Phi(B,B') \mathcal{E}(B') \right],
 $$
 where $\EE_{B,B'}$ denotes integration with respect to the law of $(B,B')$ on $C[0,1]\times C[0,1]$.
 Using this with $\Phi\equiv 1$, we see that the normalizing constants also converge,
 $$\lim_{N\to\infty}\mathsf{Z}_{\U,\V}(N)=\lim_{N\to\infty}\EE_{\mathrm {rw}} \left[  \mathcal{E}(\vv W_-\topp N) \right]
 = \EE_{B,B'} \left[ \mathcal{E}(B') \right]=\mathfrak{K}(\U,\V).$$
 Thus
 \begin{multline*}
    \lim_{N\to\infty} \EE_{\mathrm {TLE}} \left[\Phi(\vv W_+\topp N,\vv W_-\topp N)\right]
    = \frac{\lim_{N\to\infty} \EE_{\mathrm {rw}} \left[\Phi(\vv W_+\topp N,\vv W_-\topp N) \mathcal{E}(\vv W_-\topp N) \right]}{\lim_{N\to\infty} \mathsf{Z}_{\U,\V}(N)}
    \\
    =\frac{1}{\mathfrak{K}(\U,\V)} \EE_{B,B'}\left[\Phi(B,B') \mathcal{E}(B')\right]
    =\int_{C[0,1]} \left( \int_{C[0,1]} \Phi(b,b') \frac{\mathcal{E}(b')}{\mathfrak{K}(\U,\V)}  P_B(db')\right)  P_B(db)
    \\=
    \int_{C[0,1]}\left(\int_{C[0,1]}  \Phi(b,x)     P_X(dx) \right) P_B(db)
    =
    \EE_{B,X} \left[\Phi(B,X) \right],
 \end{multline*}
 where we used independence of the Brownian motions $B,B'$ and  \eqref{RN-B}.
(Here, $\EE_{B,X}$ denotes integration with respect to the (product) law of $(B,X)$ on $C[0,1]\times C[0,1]$.) This establishes \eqref{goal0} and ends the proof.

\end{proof}

\subsection{Large deviations}
Large deviations for the height function process $\{H_N(\floor{Nx})\}_{x\in[0,1]}$ of ASEP
have been discussed in Refs. %
 \cite{derrida2002exact} %
  and  \cite{derrida2003-Exact-LDP},
with the height  function interpreted as particle density profile. (There are also nice expositions in
\cite[Section 5]{derrida06matrix}, \cite[Section 16]{derrida07nonequilibrium}.)
In this section, we use Theorem \ref{thm1.1} to deduce large deviations for the height function of the TASEP directly from Mogulskii's theorem \cite[Theorem 5.1.2]{DZ-1998-large}. In Section \ref{Sect:old-LDP}
  we show how to recover formulas discovered in \cite{derrida2003-Exact-LDP}, and we determine the additive normalization.

Let $(\mathbb{X},d)$ be a complete separable metric space. Consider a sequence of probability spaces
$(\Omega\topp N,\PP\topp N)$ and a family of random variables $X_N:\Omega\topp N\to \mathbb{X}$, $N=1,2,\dots$. A standard statement of the large deviation principle in Varadhan's sense involves a family of Borel
 subsets $A$ of $\mathbb{X}$, their interiors $\rm{int}(A)$ and closures $\rm{cl}( A)$ and specifies
 asymptotics of probabilities in terms of a  rate function $I$ by the following upper/lower bounds:
\begin{multline*}
  -\inf_{x\in \rm{int}(A)} I(x)\leq \liminf_{N\to\infty}\frac1N \log \PP\topp N (X_N\in  A)
\\\leq \limsup_{N\to\infty}\frac1N \log \PP\topp N (X_N\in  A)\leq -\inf_{x\in\rm{cl}( A)} I(x).
\end{multline*}

It will be more convenient to use an equivalent definition which we now recall; see \cite[Theorem 4.4.13]{DZ-1998-large}.  (Compare also  \cite[Definitions 1.1.1 and 1.2.2]{Dupuis-Ellis1997weak}.)

\begin{definition}\label{D-LDP}
  Let $(\mathbb{X},d)$ be a complete separable metric space.   The sequence $\{X_N\}$   satisfies the {\em large deviation principle} (LDP), if
there exists a lower semicontinuous function $I:\mathbb{X}\to[0,\infty]$, called the {\em rate function}, such that \begin{enumerate}[(i)]
  \item for every bounded continuous function $\Phi:\mathbb{X}\to\RR$,
  \begin{equation}
    \label{Lap-Def}
    \lim_{N\to\infty}\frac1N\log \EE_N \left[\exp(N\Phi(X_N))\right] = \sup_{x\in \mathbb{X}}\{ \Phi(x)-I(x)\},
  \end{equation}
  where $\EE_N[\cdot]=\int_{\Omega\topp N} (\cdot) d\PP_N$ denotes the expected value.
  \item $I$ has compact level sets, $I^{-1}[0,w]$ is a compact subset of $\mathbb{X}$ for every $w\geq 0$.
\end{enumerate}
\end{definition}

To prove the LDP for the height function, we consider the sequence $\{X_N\}$ of $\mathbb{X}=C([0,1],\RR^2)$-valued random variables defined on
probability spaces $\Omega\topp N=(\calS\topp N\times \calS\topp N, \PP_{\mathrm{TLE}}\topp N)$ obtained by
linear interpolation between the points
$\left(\frac{k}{N},\frac{S_1(k)}{N}\right)$, $k=0,\dots,N$, in the first component and the points $\left(\frac{k}{N},\frac{S_2(k)}{N}\right)$, $k=0,\dots,N$, in the second component.
By Theorem \ref{thm1.1}, the first component of $X_N$ then has the same law as the continuous interpolation of
the height function \eqref{HN} based on the points $\left(\frac{k}{N},\frac{H_N(k)}{N}\right)$,  $k=0,\dots,N$. 

To introduce the rate function, we need additional notation. Let
$$h(x)=\begin{cases}
x \log x +(1-x)\log(1-x) & 0\leq x \leq 1, \\
\infty & x< 0 \mbox{ or } x > 1.
\end{cases}$$

Denote by $\mathcal{AC}_0$ the set of absolutely continuous functions $f\in C[0,1]$ such that $f(0)=0$.  Let
\begin{equation}
  \label{Lab}
  K(\A,\B) =   \log(\bar \rho(1-\bar\rho)),
\end{equation}
where
\begin{equation}
  \label{rho-bar}
\bar \rho=\bar\rho(\A,\B)=\begin{cases}
  \frac{1}{1+\A} & \A>1, \A<\B ,\\
  \frac12 & \A\le 1, \B\le 1, \\
  \frac{\B}{1+\B} & \B>1, \B>\A,
\end{cases}
\end{equation} denotes the limiting particle density, as indicated on the phase diagram in Fig. \ref{Fig2}.
(Notation $\log(\bar\rho(1-\bar\rho))$ was introduced
  in \cite{derrida2003-Exact-LDP}.)
\begin{theorem}\label{Thm:LDP} If  $\A,\B>0$, then
  the sequence  $\{X_N\}$ satisfies the large deviation principle with respect to the
  probability measures $\PP_{\mathrm{TLE}}\topp N$ with the rate function
  \begin{multline}
    \label{I}
    I(f_1,f_2)=
       \int_0^1 \left(h(f_1'(x)) +h(f_2'(x)) \right) dx
       \\
       + \log(\A\B)\ \min_{0\leq x\leq 1}(f_1(x)-f_2(x)) - (f_1(1)-f_2(1))\log \B-K(\A,\B)
  \end{multline}
  if functions $f_1,f_2\in\mathcal{AC}_0$; we let $I=\infty$ for all other $f_1,f_2\in C[0,1]$.
\end{theorem}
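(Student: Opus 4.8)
The plan is to deduce the LDP for $\{X_N\}$ under $\PP_{\mathrm{TLE}}$ by exponentially tilting the LDP for the pair of interpolated Bernoulli walks under $\PP_{\mathrm{rw}}$, which Mogulskii's theorem supplies, and then to rewrite the tilted rate function in the form \eqref{I}. Under $\PP_{\mathrm{rw}}$ the coordinates $\vv S_1,\vv S_2$ are independent random walks with i.i.d.\ $\mathrm{Bernoulli}(1/2)$ increments, whose cumulant generating function $\Lambda(\la)=\log\frac{1+e^{\la}}{2}$ is finite for all $\la$; an elementary Legendre computation gives $\Lambda^*(x)=h(x)+\log 2$, so by independence the bivariate rate is $\Lambda^*(x_1,x_2)=h(x_1)+h(x_2)+2\log 2$. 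Since the interpolations $f_1^N,f_2^N$ are $1$-Lipschitz, $\{X_N\}$ takes values in a fixed compact subset of $C([0,1],\RR^2)$ and is exponentially tight there, so Mogulskii's theorem \cite[Theorem 5.1.2]{DZ-1998-large} gives that, under $\PP_{\mathrm{rw}}$, $\{X_N\}$ satisfies the LDP in $C([0,1],\RR^2)$ with the good rate function
\[
J(f_1,f_2)=\int_0^1\bigl(h(f_1'(x))+h(f_2'(x))\bigr)\,dx+2\log 2\quad\text{for }f_1,f_2\in\mathcal{AC}_0,
\]
and $J=\infty$ otherwise.

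Next I would realise $\PP_{\mathrm{TLE}}$ as an exponential tilt of $\PP_{\mathrm{rw}}$. Writing $X_N=(f_1^N,f_2^N)$ and using $S_i(N)/N=f_i^N(1)$ together with the fact that $f_1^N-f_2^N$ is piecewise linear on $\{k/N\}$, so that $N^{-1}\min_{0\le j\le N}(S_1(j)-S_2(j))=\min_{0\le x\le 1}(f_1^N(x)-f_2^N(x))$, formula \eqref{P-Gibbs} reads $\frac{d\PP_{\mathrm{TLE}}\topp N}{d\PP_{\mathrm{rw}}}(X_N)=\mathfrak{C}_{\A,\B}^{-1}e^{N F(X_N)}$ with the Lipschitz functional
\[
F(f_1,f_2):=(f_1(1)-f_2(1))\log\B-\log(\A\B)\min_{0\le x\le 1}(f_1(x)-f_2(x)),
\]
which is moreover bounded on the compact set carrying the $X_N$'s. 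For any bounded continuous $\Phi$ one then has $\frac1N\log\EE_{\mathrm{TLE}}[e^{N\Phi(X_N)}]=\frac1N\log\EE_{\mathrm{rw}}[e^{N(\Phi+F)(X_N)}]-\frac1N\log\mathfrak{C}_{\A,\B}$, and Varadhan's lemma (its exponential moment hypothesis is trivial, $\Phi+F$ being bounded on the support of the $X_N$'s) applied to the $\PP_{\mathrm{rw}}$-LDP shows the first term converges to $\sup_f\{\Phi(f)+F(f)-J(f)\}$. Taking $\Phi\equiv0$ identifies $\lim_N\frac1N\log\mathfrak{C}_{\A,\B}=-\inf_f(J-F)$, a finite number since $J$ has compact level sets and $F$ is locally bounded; hence $\lim_N\frac1N\log\EE_{\mathrm{TLE}}[e^{N\Phi(X_N)}]=\sup_f\{\Phi(f)-I(f)\}$ with $I:=(J-F)-\inf_f(J-F)$, which is exactly \eqref{Lap-Def}. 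Goodness of $I$ follows from goodness of $J$ and continuity/local boundedness of $F$ (level sets of $I$ lie inside level sets of $J$ and are closed because $J-F$ is lower semicontinuous), so $\{X_N\}$ satisfies the LDP under $\PP_{\mathrm{TLE}}$ with good rate function $I$.

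It remains to match $I$ with \eqref{I}. The two copies of $2\log 2$ — one in $J$, one in $\inf_f(J-F)$ — cancel, leaving
\[
I(f_1,f_2)=\int_0^1\bigl(h(f_1')+h(f_2')\bigr)dx+\log(\A\B)\min_{0\le x\le1}(f_1-f_2)-(f_1(1)-f_2(1))\log\B-C,
\]
with $C=\inf_{f_1,f_2\in\mathcal{AC}_0}\bigl[\int_0^1(h(f_1')+h(f_2'))dx+\log(\A\B)\min_{x}(f_1-f_2)-(f_1(1)-f_2(1))\log\B\bigr]$; the whole content of the explicit formula is then the identity $C=K(\A,\B)=\log(\bar\rho(1-\bar\rho))$, with $\bar\rho$ as in \eqref{rho-bar}, and I expect this to be the main obstacle. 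It is a purely deterministic variational problem that I would attack as follows: $h$ is convex and finite only on $[0,1]$, and $\min_x(f_1-f_2)\le 0$ always (both functions start at $0$), so Jensen's inequality forces the infimum onto piecewise-linear competitors with constant slopes on the at most two subintervals cut out by the location of the minimum of $f_1-f_2$; the resulting finite-dimensional minimization is then solved case-by-case in the signs of $\log(\A\B)$ (fan versus shock) and of $\log\B$, reproducing the three regimes of \eqref{rho-bar}. Alternatively, since $\mathfrak{C}_{\A,\B}=\mathsf Z_{\A,\B}/4^N$ one has $C=-\lim_N\frac1N\log\mathsf Z_{\A,\B}$, and the classical asymptotics $\frac1N\log\mathsf Z_{\A,\B}\to-\log(\bar\rho(1-\bar\rho))$ for the open-TASEP partition function give $C=K(\A,\B)$ directly; this is also where Theorem \ref{thm1.1} enters, letting $\bar\rho$ and the first component of $X_N$ be read in TASEP terms. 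The detailed reconciliation of \eqref{I}, including this additive constant, with the formulas of \cite{derrida2003-Exact-LDP} is carried out in Section \ref{Sect:old-LDP}.
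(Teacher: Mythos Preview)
Your proof is correct and follows the same overall route as the paper: Mogulskii's theorem for the pair of interpolated Bernoulli walks under $\PP_{\mathrm{rw}}$, then an exponential tilt via Varadhan's lemma to pass to $\PP_{\mathrm{TLE}}$, with the identification of the additive constant $C=K(\A,\B)$ deferred to Section~\ref{Sect:old-LDP} (your alternative via the known partition-function asymptotics $\tfrac1N\log\mathsf Z_{\A,\B}\to-\log(\bar\rho(1-\bar\rho))$ is a legitimate shortcut the paper does not take). The one substantive difference is in verifying Varadhan's moment hypothesis: you exploit that the $X_N$ are confined to the fixed compact set of nondecreasing $1$-Lipschitz pairs, on which the tilt $F$ is bounded, so condition \eqref{Exp-bd} is automatic; the paper instead treats $\Psi=\Phi+\log\wt g$ as an unbounded continuous functional and bounds $\EE_{\mathrm{rw}}[e^{2N\Psi(X_N)}]$ via Cauchy--Schwarz and the L\'evy--Ottaviani maximal inequality (Lemma~\ref{Lem:Levy}). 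Your observation is a genuine simplification for this theorem, though the paper's maximal-inequality machinery is still needed elsewhere (Theorem~\ref{Thm2.1}), where the $1/\sqrt{N}$ scaling means the relevant exponent is not uniformly bounded.
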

Since $h=\infty$ outside of $[0,1]$, it is clear that expression \eqref{I} can only be finite for
$f_1,f_2\in\mathcal{AC}_0$ with the derivatives in $[0,1]$ for almost all $x$.
In particular, as a consequence of the Arzel\`a-Ascoli theorem, $I(\cdot)$ is lower semicontinuous and has compact level sets.
 \begin{proof}
 By a theorem of Mogulskii, see \cite[Theorem 5.1.2]{DZ-1998-large}, $X_N$ satisfies the LDP with respect to the law  $\PP_{\mathrm{rw}}$ of two independent
 Bernoulli$(1/2)$  random walk paths.
The rate function for two independent components with Bernoulli$(1/2)$ increments becomes
 $$
 I_{\mathrm{rw}}(f_1,f_2)=\log 4+\int_0^1 \left(h(f_1'(x))+h(f_2'(x))\right)dx
 =\int_0^1 \left(h(f_1'(x)|\tfrac12)+h(f_2'(x)|\tfrac12)\right)dx,
 $$
 where  $f_1,f_2\in\mathcal{AC}_0$, see \cite[Exercise 2.2.23]{DZ-1998-large},
 and
\begin{equation}\label{H-entr}
  h(x|y)=
   x \log
   \left(\frac{x}{y}\right)+(1-x) \log
   \left(\frac{1-x}{1-y}\right), \quad x,y\in(0,1).
\end{equation}

In order to use \eqref{Lap-Def}, we need to  fix a bounded continuous function
$\Phi: C([0,1],\RR^2)\to\RR$  and compute
\begin{equation}\label{Lim2prove}
\lim_{N\to \infty}\frac1N \log \EE_{\mathrm{TLE}}\left[e^{N\Phi(X_N)} \right]=
 \lim_{N\to\infty} \frac1{N}\log \EE_{\mathrm{rw}}\left[e^{N\Phi(X_N)} g(X_N)^N
   \right],
\end{equation}
where  $g$  is given by \eqref{g(s)}.
To proceed, we
 introduce a version of $g$  that   acts on pairs of
continuous functions $f_j\in\mathcal{AC}_0$ by
$$\wt g (f_1,f_2):=\frac{\B^{f_1(1)-f_2(1)}}{(\A\B)^{\min_{0\leq x\leq 1}(f_1(x)-f_2(x))}}.
$$
It is clear that $\wt g(N X_N)=g(\vv S_1,\vv S_2)$, where $\vv S_k=(0, S_k(1),\dots,S_k(N))$ are the sums from \eqref{S-coords} %
and  that $\wt g(N X_N)=\wt g(X_N)^N=\exp(N \log \wt g(X_N))$.
 To compute the limit \eqref{Lim2prove}, we use Varadhan's lemma  with respect to $\PP_\mathrm{rw}$.
  According to Varadhan's lemma \cite[Theorem 4.3.1]{DZ-1998-large}, if $\Psi: C([0,1],\RR^2)\to\RR$ is a continuous function such that
  \begin{equation}
    \label{Exp-bd}
    \limsup_{N\to\infty}\frac1{N}\log \EE_{\mathrm{rw}}\left[e^{\gamma N\Psi(X_N)}
   \right]<\infty \mbox{ for some $\gamma>1$},
  \end{equation}
  then  $$
  \lim_{N\to\infty} \frac1{N}\log \EE_{\mathrm{rw}}\left[e^{N\Psi(X_N)}
   \right]=\sup_{f_1,f_2}\{\Psi(f_1,f_2)- I_{\mathrm{rw}}(f_1,f_2)\}.$$
   We apply this to a  continuous but unbounded function $\Psi: C([0,1],\RR^2)\to\RR$ given by
 \begin{multline*}
    \Psi(f_1,f_2)=\Phi(f_1,f_2)+\log \wt g (f_1,f_2)
    \\=
 \Phi(f_1,f_2)+\log (\B) (f_1(1)-f_2(1))-\log(\A\B) \min_{0\leq x\leq 1}(f_1(x)-f_2(x)).
 \end{multline*}
 To verify that \eqref{Exp-bd} holds with $\gamma=2$, we note that by the Cauchy-Schwartz inequality
 \begin{multline*}
 \EE_{\mathrm{rw}}\left[e^{2 N\Psi(X_N)}\right]
 =\EE_{\mathrm{rw}}\left[e^{2 N\Phi(X_N)} g^2(\vv S_1,\vv S_2)\right]
  \leq e^{N|\Phi\|_\infty} \EE_{\mathrm{rw}}\left[g^2(\vv S_1,\vv S_2)\right]
 \\ \leq   e^{N|\Phi\|_\infty}
 \left(\EE_{\mathrm{rw}}\left[(\A\B)^{-4 \min_{0\leq j\leq N}\{S_1(j)-S_2(j)\}  }\right]\right)^{1/2}
 \left(\EE_{\mathrm{rw}}\left[\B^{4(S_1(N)-S_2(N)) }\right]\right)^{1/2}.
 \end{multline*}
 Inequality \eqref{Levy-Ott}, see Remark \ref{Rem:howtouse}, shows that \eqref{Exp-bd} holds.
From Varadhan's lemma \cite[Theorem 4.3.1]{DZ-1998-large} we get
\begin{multline*}
   \lim_{N\to\infty} \frac1{N}\log \EE_{\mathrm{rw}}\left[e^{N\Phi(X_N)} \wt g(X_N)^N
   \right]
   =
   \sup_{f_1,f_2 \in C[0,1]}\{ \Phi(f_1,f_2)+\log \wt g(f_1,f_2)-I_{\mathrm{rw}}(f_1,f_2)\}
   \\
   =\sup_{f_1,f_2 \in C[0,1]} \Big\{
    \Phi(f_1,f_2)
     -\big(I_{\mathrm{rw}}(f_1,f_2)\\+\log (\A\B){\min_{0\leq x\leq 1} (f_1(x)-f_2(x))}- (f_1(1)-f_2(1))\log\B \big)\Big\}.
\end{multline*}
Using this with $\Phi\equiv 0$ we see that the normalizing constants in \eqref{P-Gibbs} satisfy $$ \lim_{N\to\infty} \frac1N\log \mathfrak{C}_{\A ,\B }(N)
=-K_0(\A,\B),$$
 where $K_0(\A,\B)$ is given by the same variational expression
     \begin{multline}\label{I_0}
    K_0(\A,\B)
    =\inf_{f_1,f_2\in\mathcal{AC}_0} \Big\{ \int_0^1 \left( h(f_1'(x)|\tfrac12) +h(f_2'(x)|\tfrac12) \right)dx
       \\
       +  \log(\A\B)\min_{0\leq x\leq 1}(f_1(x)-f_2(x)) - (f_1(1)-f_2(1))\log \B\Big\}.
  \end{multline}
    Thus
    \begin{multline*}%
      \lim_{N\to\infty} \frac1{N}\log \EE_{\mathrm{TLE}}\left[e^{N\Phi(X_N)}\right]
   =   \lim_{N\to\infty} \frac1{N}\log \EE_{\mathrm{rw}}\left[e^{N\Phi(X_N)} \wt g(X_N)^N
   \right]-\lim_{N\to\infty} \frac1N \log \mathfrak{C}_{\A ,\B }(N)
   \\
    =\sup_{f_1,f_2 \in C[0,1]} \Big\{
    \Phi(f_1,f_2) \hfill
    \\ -\left(I_{\mathrm{rw}}(f_1,f_2)+\log (\A\B){\min_{0\leq x\leq 1} (f_1(x)-f_2(x))}- (f_1(1)-f_2(1))\log\B - K_0(\A,\B)\right) \Big\}.
\end{multline*}
This proves LDP with the rate function that depends on $K_0(\A,\B)$.
To conclude the proof,
    it remains to verify  that   expression \eqref{I_0} for $K_0(\A,\B)$ simplifies to the expression $ K(\A,\B) =   \log(\bar \rho(1-\bar\rho))$ given in \eqref{Lab}.
  We postpone this part of the proof
 until
   Section \ref{Sect:old-LDP}, where  as part of the proof of Proposition \ref{Prop-LD-derrida-I},
we will obtain formula \eqref{Lab}  for $\A\B\geq 1$,
   and then  as part of the proof of Proposition \ref{Prop-B-W} we will establish that \eqref{Lab} holds also for $\A\B\leq 1$.
\end{proof}

{
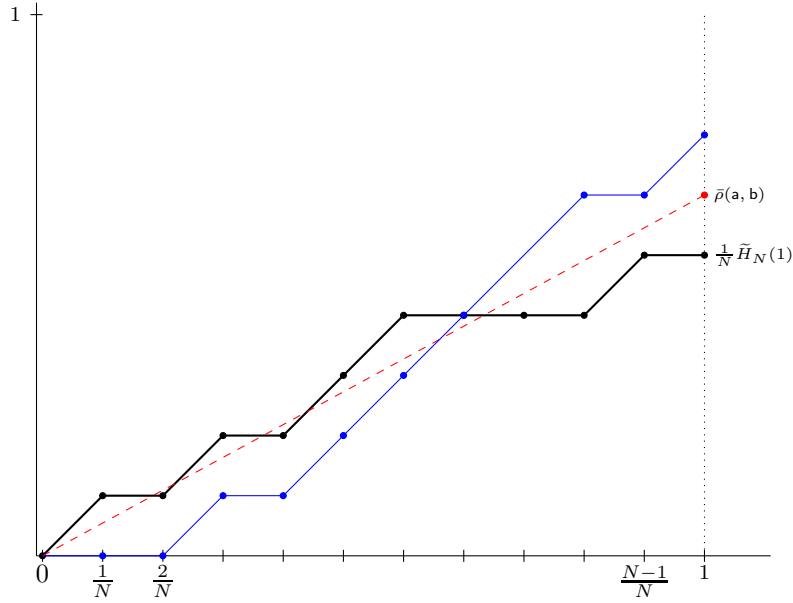
\begin{figure}
    \begin{tikzpicture}[scale=.8]

\draw[-] (.9,0) to (0.9,9.2);
   \draw[-] (.8,9) to (1,9);
      \node[left] at (.8,9) { \footnotesize $1$};
 \draw[-] (.9,0) to (13.1,0);
   \draw[-] (1,-.1) to (1,0.1);
  \draw[-] (2,-.1) to (2,0.1);
  \draw[-] (3,-.1) to (3,0.1);
  \draw[-] (4,-.1) to (4,0.1);
  \draw[-] (5,-.1) to (5,0.1);
  \draw[-] (6,-.1) to (6,0.1);
  \draw[-] (7,-.1) to (7,0.1);
  \draw[-] (8,-.1) to (8,0.1);
  \draw[-] (9,-.1) to (9,0.1);
  \draw[-] (10,-.1) to (10,0.1);
  \draw[-] (11,-.1) to (11,0.1);
  \draw[-] (12,-.1) to (12,0.1);

\draw [fill] (1,0) circle [radius=0.05];

\draw[-,thick] (1,0) to (2,1);
\draw [fill] (2,1) circle [radius=0.05];

\draw[-,thick] (2,1) to (3,1);
\draw [fill] (3,1) circle [radius=0.05];

\draw[-,thick] (3,1) to (4,2);
\draw [fill] (4,2) circle [radius=0.05];

 \draw[-,thick] (4,2) to (5,2);
\draw [fill] (5,2) circle [radius=0.05];

 \draw[-,thick] (5,2) to (6,3);
\draw [fill] (6,3) circle [radius=0.05];

 \draw[-,thick] (6,3) to (7,4);
\draw [fill] (7,4) circle [radius=0.05];

 \draw[-,thick] (7,4) to (8,4);
\draw [fill] (8,4) circle [radius=0.05];

 \draw[-,thick] (8,4) to (9,4);
\draw [fill] (9,4) circle [radius=0.05];
 \draw[-,thick] (9,4) to (10,4);
\draw [fill] (10,4) circle [radius=0.05];
 \draw[-,thick] (10,4) to (11,5);
\draw [fill] (11,5) circle [radius=0.05];
 \draw[-,thick] (11,5) to (12,5);
\draw [fill] (12,5) circle [radius=0.05];
   \node[below] at (1,0) {  $0$};
      \node[below] at (2,0) {  $\tfrac1N$};
       \node[below] at (3,0) {  $\tfrac2{N}$};

            \node[below] at (11,0) {  $\tfrac{N-1}{N}$};
             \node[below] at (12,0) { \footnotesize $1$};
\draw[-,dotted] (12,0) to (12,9);
\draw [fill,red] (12,6) circle [radius=0.05];
\node[right] at (12,6) { \tiny $\bar \rho(\A,\B)$};
\draw[-,dashed,thin,red] (1,0) to (12,6);

\node[right] at  (12,5) { \tiny $\frac1N\wt H_N(1)$};
\draw[-,blue] (1,0) to (3,0);
\draw[-,blue] (3,0) to (4,1);
\draw[-,blue] (4,1) to (5,1);
\draw[-,blue] (5,1) to (10,6);
\draw[-,blue] (10,6) to (11,6);
\draw[-,blue] (11,6) to (12,7);
\draw [fill,blue] (2,0) circle [radius=0.05];
\draw [fill,blue] (3,0) circle [radius=0.05];
\draw [fill,blue] (4,1) circle [radius=0.05];
\draw [fill,blue] (5,1) circle [radius=0.05];
\draw [fill,blue] (6,2) circle [radius=0.05];
\draw [fill,blue] (7,3) circle [radius=0.05];
\draw [fill,blue] (8,4) circle [radius=0.05];
\draw [fill,blue] (10,6) circle [radius=0.05];
\draw [fill,blue] (11,6) circle [radius=0.05];
\draw [fill,blue] (12,7) circle [radius=0.05];
\end{tikzpicture}
  \caption{ Continuous interpolation  $X_N$ is a pair of piecewise-linear lines. The first component $\frac1N\wt{H}_N$ of $X_N$ is marked as the thick black line and the second component of $X_N$ marked in blue.  Note that  $\frac1N\wt{H}_N(1)=\frac{1}{N}\sum_{j=1}^N\tau_j\to \bar\rho$, see Fig. \ref{Fig2}, except on the coexistence line with $\A=\B>1$. (The dashed line represents the most likely trajectory of the random curve $x\mapsto \frac1N\wt{H}_N(x)$ for large $N$.)
  \label{FigH}}
\end{figure}

}

It is clear that the continuous linear interpolation $\wt H_N$ of the step function forming the height
 function $\{H_N(\floor{Nx})\}_{x\in[0,1]}$ has the same law as the first  component of the vector $N X_N(x)$.
 By contraction principle (see e.g. \cite[Theorem 4.2.1]{DZ-1998-large} or \cite[Theorem  1.3.2]{Dupuis-Ellis1997weak}), this implies the following LDP.

\begin{theorem}\label{Cor:LDP4H}
    If $\A,\B>0$ then the sequence of linear interpolations $\{\frac1N \wt H_N\}$ of the height function of a TASEP on $\{1,\dots,N\}$ satisfies the large deviation principle with the rate function
   \begin{multline}\label{I(H)}
  \calI(f)= \inf_{g\in\mathcal{AC}_0}\Big\{  \int_0^1 (h(f'(x)) +h(g'(x))) dx
   \\ + \log(\A\B) \min_{0\leq x\leq 1}(f(x)-g(x)) -\log (\B)(f(1)-g(1)) \Big\} - \log (\bar \rho(1-\bar\rho))
\end{multline}
if $f\in\mathcal{AC}_0$, and $\calI(f)=\infty$ otherwise. Here $\bar \rho$ is given by \eqref{rho-bar}.
\end{theorem}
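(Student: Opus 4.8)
The plan is to deduce Theorem~\ref{Cor:LDP4H} from Theorem~\ref{Thm:LDP} by an application of the contraction principle along the coordinate projection. As already noted just before the statement, the continuous linear interpolation $\frac1N\wt H_N$ of the height function has the same law as the first coordinate of $X_N$, so the LDP for $\{\frac1N\wt H_N\}$ is exactly the LDP for the image of the sequence $\{X_N\}$ under the map $\pi\colon C([0,1],\RR^2)\to C[0,1]$, $\pi(f_1,f_2)=f_1$.

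First I would record the two hypotheses needed to invoke the contraction principle: the map $\pi$ is continuous (it is $1$-Lipschitz for the supremum metrics), and by Theorem~\ref{Thm:LDP} the sequence $\{X_N\}$ satisfies the LDP on $C([0,1],\RR^2)$ with the good rate function $I$ of \eqref{I}, goodness (compactness of level sets) having been observed there via the Arzel\`a--Ascoli theorem. The contraction principle (\cite[Theorem~4.2.1]{DZ-1998-large}, or \cite[Theorem~1.3.2]{Dupuis-Ellis1997weak}) then yields that $\{\pi(X_N)\}$, and hence $\{\frac1N\wt H_N\}$, satisfies the LDP on $C[0,1]$ with the good rate function
\[
  \calI(f)=\inf\{I(f_1,f_2):\pi(f_1,f_2)=f\}=\inf_{f_2\in C[0,1]} I(f,f_2).
\]

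Second, I would unwind this infimum using the explicit form \eqref{I}. Since $I(f,f_2)=+\infty$ unless both $f\in\mathcal{AC}_0$ and $f_2\in\mathcal{AC}_0$, the rate $\calI(f)$ is $+\infty$ for $f\notin\mathcal{AC}_0$, while for $f\in\mathcal{AC}_0$ the infimum effectively ranges over $f_2\in\mathcal{AC}_0$. The only $f_2$-independent summand in \eqref{I} is the constant $-K(\A,\B)=-\log(\bar\rho(1-\bar\rho))$, which pulls out of the infimum; writing $g$ for $f_2$, the remaining bracket is precisely the expression appearing in \eqref{I(H)}. This matches the claimed rate function term by term, completing the proof.

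I do not anticipate a genuine obstacle here: all the substance is carried by Theorem~\ref{thm1.1} and Theorem~\ref{Thm:LDP}. The only points that require (brief) attention are the continuity of the projection and the good-rate-function property, so that the contraction principle outputs a good rate function on $C[0,1]$; both are immediate from what is already in place. The one genuinely substantive ingredient elsewhere — the identification of the normalizing constant $K_0(\A,\B)$ with $\log(\bar\rho(1-\bar\rho))$ — belongs to the proof of Theorem~\ref{Thm:LDP} and is handled separately in Section~\ref{Sect:old-LDP}, so it does not need to be revisited here.
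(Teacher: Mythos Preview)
Your proposal is correct and follows exactly the paper's approach: apply the contraction principle to the first-coordinate projection, using Theorem~\ref{Thm:LDP} (with the good-rate-function property already noted there), and then rewrite the resulting infimum as \eqref{I(H)} with the constant $K(\A,\B)=\log(\bar\rho(1-\bar\rho))$ pulled outside. The paper's own proof is a two-sentence version of what you wrote.
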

\begin{proof}
To obtain \eqref{I(H)}, we apply the contraction principle to the first coordinate mapping,
with \eqref{I} applied to $f_1=f$, $f_2=g$. Anticipating subsequent developments we wrote \eqref{Lab} for $K_0(\A,\B)$.
\end{proof}

A version of this result for a more general ASEP appears in \cite{derrida2003-Exact-LDP}, with the rate function rewritten in different and more explicit forms for $\A\B\leq 1$ and $\A\B>1$, which are discussed in Section \ref{Sect:old-LDP}.
The main novelty in Theorem \ref{Cor:LDP4H} is that its proof and the expression for the rate function do not distinguish between   the
shock  and the fan region. (This has been anticipated in \cite[Formula (3.56)]{Bertini-2007}.) On the other hand, additional nontrivial work is needed to recover the formulas that appear in \cite{derrida2003-Exact-LDP}.

We remark that Ref. \cite[Section 3.6]{Bertini-2007} uses a two layer representation \cite{duchi2005combinatorial} to obtain a version of   Theorem \ref{Cor:LDP4H} for the case $\A=\B=0$, which is not covered by our results.  Ref. \cite {Bertini-2007} also discusses the corresponding version of Proposition \ref{Prop-LD-derrida-II-a}.


\section{Comparison with previous large deviation results }\label{Sect:old-LDP}
In this section we discuss previous LDP results for ASEP, specialized to the case of
TASEP. With some additional work
these results
can be obtained from Theorem \ref{Cor:LDP4H}, and the derivations identify constant $K_0(\A,\B)$ given by the variational formula \eqref{I_0} in the proof as a simpler expression  \eqref{Lab} based on the phase diagram.

Converted to our notation, the LDP in Ref. \cite[(1.11)]{derrida2003-Exact-LDP} gives the following rate function for the shock region of TASEP:
\begin{proposition}
  \label{Prop-LD-derrida-I}
  If  $\A\B\geq 1$ then the rate function \eqref{I(H)} is
\begin{multline}
  \label{I-Derrida}
  \mathcal{I}(f)=\min_{0\leq  y \leq 1}\Big\{
  \int_0^y\left( f'(x)\log\tfrac{\A f'(x)}{1+\A}+(1-f'(x))\log \tfrac{1-f'(x)}{1+\A}\right)
   dx
   \\ +\int_y^1 \left(f'(x)\log \tfrac{f'(x)}{1+\B}+(1-f'(x))\log \tfrac{(1-f'(x))\B}{1+\B} \right)dx
   \Big\}-K(\A,\B),
\end{multline}
where
\begin{equation}
  \label{K1}
  K(\A,\B)=\log \min\left\{\frac{\A}{(1+\A)^2},\frac{\B}{(1+\B)^2}\right\} =\log \frac{\A\vee\B}{(1+\A\vee\B)^2}.
\end{equation}
\end{proposition}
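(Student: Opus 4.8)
The plan is to start from the large deviation principle of Theorem \ref{Cor:LDP4H}, whose rate function — by the Mogulskii–Varadhan–contraction argument given there — has the form
\[
\calI(f)=\int_0^1 h(f'(x))\,dx+J(f)-c(\A,\B),\qquad J(f):=\inf_{g\in\mathcal{AC}_0}\Bigl\{\int_0^1 h(g')dx+\log(\A\B)\min_{0\le x\le1}(f-g)-\log(\B)(f(1)-g(1))\Bigr\},
\]
with an additive constant $c(\A,\B)$ which, since a rate function arising from probability measures has infimum $0$, is forced to equal $\inf_{f\in\mathcal{AC}_0}\{\int_0^1 h(f')dx+J(f)\}$; evaluating $J$ and then $c$ are the two tasks. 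A preliminary algebraic remark: expanding the two integrands in \eqref{I-Derrida} and using $\int_0^y f'=f(y)$, $\int_y^1(1-f')=(1-y)-(f(1)-f(y))$, the right-hand side of \eqref{I-Derrida} equals $\int_0^1 h(f')dx-\log(\B)f(1)+\min_{0\le y\le1}\{\log(\A\B)f(y)-y\log(1+\A)+(1-y)\log\tfrac{\B}{1+\B}\}-K(\A,\B)$. Hence it is enough to prove that $J(f)=-\log(\B)f(1)+\min_{0\le y\le1}\{\log(\A\B)f(y)-y\log(1+\A)+(1-y)\log\tfrac{\B}{1+\B}\}$ and that $c(\A,\B)=K(\A,\B)=\log\tfrac{\A\vee\B}{(1+\A\vee\B)^2}$ (the latter also proving \eqref{K1} and, via \eqref{rho-bar}, the identity $K=\log(\bar\rho(1-\bar\rho))$ postponed in Theorem \ref{Thm:LDP}).

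For the lower bound on $J(f)$ I would fix an arbitrary $g\in\mathcal{AC}_0$ and a point $y$ at which $f-g$ attains its minimum, so that this minimum equals $f(y)-g(y)$ \emph{exactly}; writing $g(y)=\int_0^y g'$, $g(1)=\int_0^1 g'$ and using $\log(\A\B)-\log\B=\log\A$, the bracketed quantity rearranges to
\[
\log(\A\B)f(y)-\log(\B)f(1)+\int_0^y\bigl(h(g')-\log(\A)\,g'\bigr)dx+\int_y^1\bigl(h(g')+\log(\B)\,g'\bigr)dx,
\]
and pointwise minimization of the two strictly convex integrands ($\min_t(h(t)-\log(\A)t)=-\log(1+\A)$ at $t=\tfrac{\A}{1+\A}$, and $\min_t(h(t)+\log(\B)t)=\log\tfrac{\B}{1+\B}$ at $t=\tfrac1{1+\B}$) yields the bound after minimizing over $y$. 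For the matching upper bound I would choose $y^\ast$ attaining the one-parameter minimum and the competitor $g^\ast\in\mathcal{AC}_0$ with slope $\tfrac{\A}{1+\A}$ on $[0,y^\ast]$ and slope $\tfrac1{1+\B}$ on $[y^\ast,1]$; bounding $\min_x(f-g^\ast)$ from above by $(f-g^\ast)(y^\ast)$ and running the same rearrangement with the two pointwise minima now attained gives the functional at $g^\ast$ exactly equal to the claimed value.

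For the constant $c(\A,\B)=\inf_f\{\int_0^1 h(f')dx+J(f)\}$, I would insert the formula for $J$ just proved and exchange the two infima; a further pointwise minimization ($\min_t(h(t)+\log(\A)t)=\log\tfrac{\A}{1+\A}$, $\min_t(h(t)-\log(\B)t)=-\log(1+\B)$) collapses the inner infimum and leaves $\inf_{0\le y\le1}\{y\log\tfrac{\A}{(1+\A)^2}+(1-y)\log\tfrac{\B}{(1+\B)^2}\}$. Being linear in $y$ this equals $\log\min\{\tfrac{\A}{(1+\A)^2},\tfrac{\B}{(1+\B)^2}\}$, and the elementary inequality $(\B-\A)(1-\A\B)\le0$, valid when $\A\B\ge1$ with $\B=\A\vee\B$, identifies this minimum with $\tfrac{\A\vee\B}{(1+\A\vee\B)^2}$; this is \eqref{K1}, it equals $\log(\bar\rho(1-\bar\rho))$ by \eqref{rho-bar}, and (combined with $I_{\mathrm{rw}}=\log4+\int(h+h)$) it also supplies the value of $K_0(\A,\B)$ left open in Theorem \ref{Thm:LDP}. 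A final line invoking Arzelà–Ascoli records that \eqref{I-Derrida} is a bona fide rate function.

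The arithmetic is routine; the only subtlety is \emph{bookkeeping}. Because $\log(\A\B)\ge0$ in the shock region while $f\mapsto\min_x(f-g)$ is concave, the functionals in play are neither convex nor concave, so a bare convexity argument will not hand over a tractable minimizer. The remedy used above is to exploit, on the lower-bound side, only the \emph{exact} identity $\min_x(f-g)=f(y)-g(y)$ at an argmin $y$, and on the upper-bound side only the \emph{trivial} bound $\min_x(f-g^\ast)\le(f-g^\ast)(y^\ast)$; in particular the competitor $g^\ast$ never has to be checked against an obstacle-type constraint, and the identical trick handles both $J(f)$ and the constant $c(\A,\B)$.
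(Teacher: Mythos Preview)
Your proof is correct and follows essentially the same route as the paper's. Both arguments hinge on the observation that, since $\log(\A\B)\ge0$, the troublesome term $\log(\A\B)\min_x(f-g)$ can be rewritten as $\min_{y}\log(\A\B)(f(y)-g(y))$, after which the joint infimum over $(g,y)$ is evaluated by pointwise minimization of $h(t)\pm(\log\A\text{ or }\log\B)t$, yielding piecewise-linear optimal $g$ with slopes $\tfrac{\A}{1+\A}$ and $\tfrac{1}{1+\B}$; the constant is then identified by repeating the same step with $f$ in place of $g$. Your presentation packages this as an explicit lower bound (pick $y$ as an argmin of $f-g$) and a matching upper bound (pick $y^\ast$ first, then the piecewise-linear competitor $g^\ast$, and use $\min_x(f-g^\ast)\le(f-g^\ast)(y^\ast)$), whereas the paper switches $\inf_g\min_y$ to $\min_y\inf_g$ directly and then optimizes over the two endpoint values $F=g(y)$, $G=g(1)$; the content is identical.
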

Although this result is known, we provide a separate proof based on Theorem \ref{Cor:LDP4H}. This  allows us to  complete part of the postponed  proof of Theorem \ref{Cor:LDP4H},  where we need to show that $K_0(\A,\B)=K(\A,\B)= \log \bar \rho(1-\bar \rho)$ for $\A\B\geq1$.
To accomplish this goal,  we use \eqref{I(H)} with $K_0(\A,\B)$ given by \eqref{I_0}.
 (Then the fact that $ K_0(\A,\B) = \log \bar \rho(1-\bar \rho) $  for $\A\B\geq 1$ will follow
from \eqref{K1}.)
\begin{proof}[Proof of Proposition \ref{Prop-LD-derrida-I}]
With $\log(\A \B)\geq 0$,  we write \eqref{I(H)} as
\begin{multline*}
  \mathcal{I}(f)
   =- K_0(\A,\B)+\int_0^1 h(f'(x))dx
   \\+\inf_g \min_{0\leq y\leq 1}\Big\{  \int_0^1  h(g'(x)) dx
   + \log(\A\B) (f(y)-g(y)) -\log (\B)(f(1)-g(1)) \Big\}
   \\
   =- K_0(\A,\B)+ \int_0^1 h(f'(x)) dx +\min_{0\leq y\leq 1}\Big\{ f(y)\log(\A\B)-f(1)\log \B \\+
   \inf_{g_1\in\mathcal{AC}_0[0,y]} \big\{ \int_0^y h(g_1'(x)) dx   -\log(\A\B)  g_1(y)\big\}
   \\ +\inf_{g_2\in\mathcal{AC}[y,1]: g_2(y)=g_1(y)} \big\{ \int_y^1 h(g_2'(x)) dx    +\log (\B)g_2(1)\big\} \Big\},
\end{multline*}
where we split $g$ into $g_1=g|_{[0,y]}$, $g_2=g|_{[y,1]}$ for $0<y<1$, with (omitted) minor changes
 for $y=0$ or $y=1$.
Since  $\inf_{g}  \int_a^b h(g'(x)) dx$ is attained on linear functions, denoting
by $F=g_1(y)=g_2(y)$ and $G=g_2(1)$ the values of $g_2$ at the endpoints of interval $[y,1]$, the
optimal (linear) functions are
$\wt g_1(x)=F x/y$ and $\wt g_2(x)=\frac{(G-F) (x-y)}{1-y}+F$.
Optimizing over all possible choices of $F\leq G$ we get
  \begin{multline*}
     \inf_{g_1}  \int_0^y h(g_1'(x)) dx   -\log(\A\B)  g_1(y)+\inf_{g_2}  \int_y^1 h(g_2'(x)) dx    +\log (\B)g_2(1) \\=
     \min_{F\in[0,y]}\min_{G\in[F,F+1-y]}\{
     y h(F/y)-\log(\A\B)F+  (1-y) h(\tfrac{G-F}{1-y})+G\log \B\}
     \\=y \log \frac{1}{1+\A}+(1-y) \log \left(\frac{\B}{1+\B}\right),
   \end{multline*}
   as the minimum over $F,G$
    is attained at
   $$F= \frac{\A y}{1+\A},\quad G= %
   F+\frac{1-y}{1+\B}.$$
   We get
   \begin{multline*}
      \mathcal{I}(f)= - K_0(\A,\B)+\int_0^1 h(f'(x)) dx
      \\+\min_{y\in[0,1]}
      \Big\{ f(y) \log \A +y \log \tfrac{1}{1+\A}+(1-y) \log \tfrac{\B}{1+\B}-(f(1)-f(y))\log \B.
      \Big\}
      \\
      = - K_0(\A,\B)+\min_{y\in[0,1]}
      \Big\{ \int_0^y h(f'(x)) dx+f(y) \log \A +y \log \tfrac{1}{1+\A}
      \\+ \int_y^1 h(f'(x)) dx+(1-y) \log \tfrac{\B}{1+\B}-(f(1)-f(y))\log \B
      \Big\}.
   \end{multline*}
   To end the proof, we note that the two integrals under the minimum in \eqref{I-Derrida} match the two integrals in the expression above:
    \begin{multline*}
     \int_0^y\left( f'(x)\log\tfrac{\A f'(x)}{1+\A}+(1-f'(x))\log \tfrac{1-f'(x)}{1+\A}\right)
   dx=
   \int_0^y h(f'(x))dx \\+ f(y) \log \A
   + y \log \tfrac{1}{1+\A}
    \end{multline*}
    and
    \begin{multline*}
      \int_y^1 \left(f'(x)\log \tfrac{f'(x)}{1+\B}+(1-f'(x))\log \tfrac{(1-f'(x))\B}{1+\B} \right)dx
      = \int_y^1 h(f'(x))dx\\
      + (f(1)-f(y))\log \tfrac1{1+\B} +(1-y) \log \tfrac{\B}{1+\B}-(f(1)-f(y))\log \tfrac{\B}{1+\B}
      \\
       = \int_y^1 h(f'(x))dx+(1-y) \log \tfrac{\B}{1+\B}-(f(1)-f(y))\log \B.
    \end{multline*}
    The additive normalization constant $K_0(\A,\B)$  can now be determined
     from the condition that $\inf_f \mathcal{I}(f)=0$. To do so, we repeat the previous calculation again. Denoting by $F=f(y)$ and $G=f(1)$, we switch the order of the infima, and use the extremal property of linear functions again:
    \begin{multline*}
      K_0(\A,\B)= \min_{y\in[0,1]}
      \inf_f\Big\{\int_0^y h(f'(x)) dx  + \int_y^1 h(f'(x)) dx
      \\+f(y) \log \A +y \log \tfrac{1}{1+\A}+(1-y) \log \tfrac{\B}{1+\B}-(f(1)-f(y))\log \B.
      \Big\}
      \\=
       \min_{y\in[0,1]} \inf_{F\in[0,y],G\in[F,F+1-y]}
      \Big\{ y h(F/y)+ (1-y)h(\tfrac{G-F}{1-y})+ F \log \A
      \\+y \log \tfrac{1}{1+\A}+(1-y) \log \tfrac{\B}{1+\B}-(G-F)\log \B \Big\}.
     \end{multline*}
     The infimum is attained at $F=\frac{y}{1+\A}$, $G=F+\frac{\B}{1+\B}(1-y)$. Therefore,
     \begin{multline*} K_0(\A,\B)=
       \min_{y\in[0,1]} \Big\{ y \log \frac{\A}{(1+\A)^2} \log \left(\frac{\A}{\B}\right)-2 y \log (1+\A)+2 (y-1) \log (1+\B)+\log
   (\B)\Big\}
   \\
     = \min_{y\in[0,1]} \Big\{ y \log \left(\frac{\A}{(1+\A)^2}\right)-(1-y) \log \left(\frac{\B}{(1+\B)^2}\right)\Big\}
    \end{multline*}
 and \eqref{K1} follows.
This   establishes   \eqref{Lab}  for $\A\B\geq 1$.\end{proof}

For the fan region of TASEP, the LDP in \cite[(1.7), (3.3),  (3.6)]{derrida2003-Exact-LDP}
 gives the rate function which we recalculated as follows.

\begin{proposition}\label{Prop-LD-derrida-II-a}
 If $\A\B< 1$ then for $f\in \mathcal{AC}_0$ with $0\leq f'\leq 1$  the rate function \eqref{I(H)} is
\begin{equation}
  \label{I-Derrida2*}
  \mathcal{I}(f)=\int_0^1 h (f'(x))  dx + \int_0^1 (\wt f'(x) \log  G_*(x)+(1-\wt f')\log(1- G_*(x)))dx-K(\A,\B),
\end{equation}
where
$G_*(x)= \left(\wt{f}'(x) \vee \tfrac{\A}{1+\A}\right)\wedge \tfrac 1{1+\B},$
and $\wt f$ is  the convex envelope of $f$, i.e., the largest  convex function below $f$.
The normalizing constant is
\begin{equation}
  \label{K:ab<1}
  K(\A,\B)=\sup_{\tfrac{\B}{1+\B}\leq \rho\leq\tfrac{1}{1+\A}}\log \rho(1-\rho)= \begin{cases}
    \log \frac{\A}{(1+\A)^2} &  \A>1,  \\
    -2 \log 2&  \A,\B\leq 1, \\
   \log \frac{\B}{(1+\B)^2}& \B>1.
  \end{cases}
\end{equation}
\end{proposition}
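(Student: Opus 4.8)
The plan is to work directly from the expression \eqref{I(H)} for the rate function, which (using the notation of the proof of Theorem~\ref{Cor:LDP4H}) we write as $\mathcal I(f)=\int_0^1 h(f'(x))\,dx+\Lambda(f)-K_0(\A,\B)$, where
\[
\Lambda(f)=\inf_{g\in\mathcal{AC}_0}\Lambda_g(f),\qquad \Lambda_g(f):=\int_0^1 h(g'(x))\,dx+\log(\A\B)\min_{0\le x\le1}\bigl(f(x)-g(x)\bigr)-\log(\B)\bigl(f(1)-g(1)\bigr),
\]
and $K_0(\A,\B)$ is the variational constant \eqref{I_0}; note that $\log(\A\B)<0$ in the fan region. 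It is enough to prove (i) $\Lambda(f)=\int_0^1\bigl(\wt f'\log G_*+(1-\wt f')\log(1-G_*)\bigr)dx$ for $f\in\mathcal{AC}_0$ with $0\le f'\le1$ (for other $f$ both sides of \eqref{I-Derrida2*} are $+\infty$), and (ii) $K_0(\A,\B)=K(\A,\B)$ with $K(\A,\B)$ as in \eqref{K:ab<1}, which one then matches against \eqref{rho-bar} to see $K(\A,\B)=\log(\bar\rho(1-\bar\rho))$, thereby also finishing the postponed part of the proof of Theorem~\ref{Cor:LDP4H}.

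For the lower bound in (i) I would dualize the minimum: for any probability measure $\mu$ on $[0,1]$ one has $\min_{0\le x\le1}(f-g)\le\int(f-g)\,d\mu$, and since $\log(\A\B)<0$ this only decreases $\Lambda_g$; writing $f(x)-g(x)=\int_0^x(f'-g')$, exchanging integrals, and minimizing over $g'(t)\in[0,1]$ pointwise via $\inf_{p\in[0,1]}(h(p)-cp)=-\log(1+e^c)$, we obtain, for every non-increasing $F:[0,1]\to[0,1]$ (with $F(t)=\mu((t,1])$),
\[
\Lambda(f)\ \ge\ \int_0^1\Bigl(c(t)f'(t)-\log(1+e^{c(t)})\Bigr)dt,\qquad c(t):=\log(\A\B)\,F(t)-\log\B .
\]
Taking $c(t)=c^*(t)$, the function $\log\frac{\wt f'(t)}{1-\wt f'(t)}$ clipped to $[\log\A,-\log\B]$ — which is non-decreasing because $\wt f'$ is, so the associated $F^*$ is admissible — the right-hand side equals $\int_0^1\bigl(\wt f'\log G_*+(1-\wt f')\log(1-G_*)\bigr)dx$: off the ``gap intervals'' of the convex envelope (the maximal open intervals on which $\wt f<f$) one has $f'=\wt f'$ a.e., while on each gap interval $\wt f$ is affine so $c^*$ is constant there, hence $\int_0^1 c^*(f'-\wt f')\,dt=0$ because $f-\wt f$ vanishes at the gap endpoints; and a pointwise Legendre computation, using $\tfrac{\A}{1+\A}<\tfrac1{1+\B}$ in the fan region, identifies $\int_0^1\bigl(c^*\wt f'-\log(1+e^{c^*})\bigr)dt$ with the claimed integral. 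For the matching upper bound I would use the explicit competitor $g$ with $g'=G_*$ and $g(0)=0$. Since $\wt f\le f$, $\wt f(1)=f(1)$ and $\log(\A\B)<0$, we get $\Lambda(f)\le\Lambda_g(f)\le\Lambda_g(\wt f)$, which reduces everything to computing $\Lambda_g(\wt f)$ with $\wt f$ convex; there $x\mapsto\wt f(x)-g(x)$ is non-increasing, then constant, then non-decreasing as $\wt f'$ crosses the thresholds $\tfrac{\A}{1+\A}$ and $\tfrac1{1+\B}$, so $\min_{0\le x\le1}(\wt f-g)$ and $\wt f(1)-g(1)$ are explicit expressions in the two threshold points, and a direct bookkeeping (splitting $[0,1]$ at those points and using $h(G_*)=G_*\log G_*+(1-G_*)\log(1-G_*)$) verifies $\Lambda_g(\wt f)=\int_0^1\bigl(\wt f'\log G_*+(1-\wt f')\log(1-G_*)\bigr)dx$.

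Finally, for (ii): because $\mathcal I$ is the rate function of a sequence of probability measures, $\inf_f\mathcal I(f)=0$, so $K_0(\A,\B)=\inf_{f\in\mathcal{AC}_0}\bigl(\int_0^1 h(f')\,dx+\Lambda(f)\bigr)$; since $\Lambda(f)$ depends on $f$ only through $\wt f$ and $\int h(f')\ge\int h(\wt f')$ by Jensen applied on the gap intervals, the infimum is attained on convex $f$, where it reduces to the one-variable minimization of $p\mapsto h(p)+p\log\gamma(p)+(1-p)\log(1-\gamma(p))$ with $\gamma(p)=(p\vee\tfrac{\A}{1+\A})\wedge\tfrac1{1+\B}$; this equals $2h(p)$ on $[\tfrac{\A}{1+\A},\tfrac1{1+\B}]$ (minimized at $p=\tfrac12$ when $\A,\B\le1$) and is otherwise minimized at $p=\tfrac1{1+\A}$ (when $\A>1$) or at $p=\tfrac{\B}{1+\B}$ (when $\B>1$), giving precisely \eqref{K:ab<1} and, via \eqref{rho-bar}, the value $\log(\bar\rho(1-\bar\rho))$.

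I expect the main obstacle to be the bookkeeping around the convex envelope in part (i): justifying that the optimal dual potential $c^*$ may be taken constant on the gap intervals of $\wt f$ (so the correction term vanishes), justifying the reduction to convex $f$ in the upper bound via $\Lambda_g(f)\le\Lambda_g(\wt f)$, and then carrying through the case analysis both in the explicit verification for convex $f$ and in the one-variable minimization of part (ii).
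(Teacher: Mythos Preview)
Your proposal is correct and proceeds along a genuinely different route for the core inequality than the paper does.

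Both approaches write $\mathcal I(f)=\int_0^1 h(f')\,dx+\inf_g J^*(f,g)-K_0$ and aim to show $\inf_g J^*(f,g)=J_*(\wt f,G_*)$. For the \emph{upper bound} you do essentially what the paper does (its Lemma~\ref{Step 3}): plug in $g_*$ with $g_*'=G_*$, pass from $f$ to $\wt f$ using $f\ge\wt f$, $f(1)=\wt f(1)$ and $\log(\A\B)<0$, and then split $[0,1]$ at the two thresholds to check $J^*(\wt f,g_*)=J_*(\wt f,G_*)$. For the \emph{lower bound} the paper first reduces to convex $f$ (your inequality $\Lambda_{g_*}(f)\le\Lambda_{g_*}(\wt f)$ is essentially one direction of its Lemma establishing \eqref{Step4.5}), and then shows $g_*$ is optimal by differentiating $\tau\mapsto J^*(\wt f,\tau g+(1-\tau)g_*)$ at $\tau=0$; this requires a separate perturbation lemma (Lemma~\ref{lem1}) for $\min_{[0,1]}(\phi-\tau\psi)$ and an integration-by-parts estimate. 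Your dual approach --- replacing $\min(f-g)$ by $\int(f-g)\,d\mu$ and optimizing over $g'$ pointwise via $\inf_p(h(p)-cp)=-\log(1+e^c)$ --- sidesteps that machinery entirely and yields the lower bound in one stroke once you observe that the clipped $c^*$ corresponds to an admissible non-increasing $F^*\in[0,1]$ and that $\int c^*(f'-\wt f')=0$ because $c^*$ is constant on the gap intervals of $\wt f$. This is shorter and conceptually clean; the paper's variational argument, on the other hand, identifies $g_*$ as the unique minimizer and is closer in spirit to the computations in \cite{derrida2003-Exact-LDP}.

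For the normalizing constant, the paper does \emph{not} compute $K(\A,\B)$ within the proof of Proposition~\ref{Prop-LD-derrida-II-a}; it defers \eqref{K:ab<1} to the proof of Proposition~\ref{Prop-B-W}, which works with linear profiles for the mean density. Your direct computation --- reducing to convex $f$ via $\int h(f')\ge\int h(\wt f')$ and then to the one-variable function $\psi(p)=h(p)+p\log\gamma(p)+(1-p)\log(1-\gamma(p))$ --- is a tidy alternative and gives the same answer. One small caution: when you write ``$K_0(\A,\B)$ is the variational constant \eqref{I_0}'' but then identify $K_0$ with $\inf_f\bigl(\int h(f')+\Lambda(f)\bigr)$ via $\inf\mathcal I=0$, be aware that \eqref{I_0} is written with $h(\cdot\mid\tfrac12)=h(\cdot)+\log 2$, so there is an innocuous $\log 4$ bookkeeping shift between the two expressions; your computation is consistent with the paper's use of \eqref{I(H)} and yields the correct $K(\A,\B)=\log(\bar\rho(1-\bar\rho))$.
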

We verify that this result follows from  Theorem  \ref{Cor:LDP4H}.
As previously, to avoid circular reasoning  we use \eqref{I(H)} with $K_0(\A,\B)$ given by \eqref{I_0}, without
identification $ K_0(\A,\B) = \log \bar \rho(1-\bar \rho) $. The identification will follow for $\A\B< 1$
once we  establish \eqref{K:ab<1} in the proof of Proposition \ref{Prop-B-W} below.

The proof is more substantial and requires additional lemmas, so we put it as a separate section.
\subsection{Proof of Proposition \ref{Prop-LD-derrida-II-a}}

In the proof, $f\in \mathcal{AC}_0$ with $0\leq f'\leq 1$ is fixed and $\wt f$ is its convex envelope. Since $\wt f$ is convex, $\wt f'$ is nondecreasing, and for definiteness, we take $\wt f'$  right-continuous.
 We write \eqref{I(H)} as $\mathcal{I}(f)=\int_0^1 h(f'(x))dx-K(\A,\B) +\inf_{g} J^*( f,g)$, where
\begin{equation}
  \label{Jg*}
J^*(f,g) = \int_0^1 h(g'(x)) dx + \log(\A\B) \min_{0\leq x\leq 1}(f(x)-g(x)) -\log (\B)(f(1)-g(1)).
\end{equation}
Similarly, we write \eqref{I-Derrida2*} as  $\mathcal{I}(f)=\int_0^1 h(f'(x))dx -K(\A,\B)+  J_*(\wt f, G_*)$, where
\begin{equation}
  \label{JG*}
 J_*(f,G) = \int_0^1\left[ f'(x) \log G(x) +(1-f'(x))\log(1-G(x))\right]dx.
\end{equation}
(One can verify that  $J_*(\wt f, G_*)= J_*(f, G_*)$, see the proof of  \cite[(A.5)]{derrida2003-Exact-LDP}, and it is the latter expression that
appears in the rate function \cite[(3.3)]{derrida2003-Exact-LDP}.)
\arxiv{The relation of \eqref{I-Derrida2*} with \cite[(1.7)]{derrida2003-Exact-LDP} is discussed in Appendix \ref{DLS2US}.}
We want to prove that
\begin{equation}\label{Pre-I}
    J_*(\wt f, G_*)=\inf_{g\in\mathcal{AC}_0} J^*(f,g).
\end{equation}

We first verify that
\begin{lemma}
\begin{equation}\label{Step4.5}
    \inf_{g\in\mathcal{AC}_0} J^*(f,g) = \inf_{g\in\mathcal{AC}_0} J^*(\wt f,g).
\end{equation}
\end{lemma}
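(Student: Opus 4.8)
The statement \eqref{Step4.5} asserts that in the variational problem \eqref{Jg*} we may replace $f$ by its convex envelope $\wt f$ without changing the infimum over $g$. The inequality $\inf_g J^*(\wt f,g)\le \inf_g J^*(f,g)$ is the easy direction: I would show that $J^*(f,g)\ge J^*(\wt f, g')$ for a suitably chosen competitor $g'$, or more precisely that every $g$ admissible for $f$ gives at least as large a value as some $g$ admissible for $\wt f$. In fact I expect the cleaner route is to prove $J^*(f,g)\ge J^*(\wt f,g)$ \emph{for the same} $g$ whenever $g$ is itself convex (or more generally lies below $f$ with the same minimum of $f-g$), since $\wt f \le f$ pointwise forces $f(1)-g(1)\ge \wt f(1)-g(1)$ only in the wrong direction — so care is needed about the sign of $\log\B$. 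This sign bookkeeping, together with the fact that $\min_x(f(x)-g(x))$ and $\min_x(\wt f(x)-g(x))$ need not coincide, is where the argument has to be done honestly rather than by a one-line monotonicity remark.

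The substantive direction is $\inf_g J^*(f,g)\le \inf_g J^*(\wt f,g)$: given a near-optimal $g$ for $\wt f$, I must produce a $g$ (possibly the same one) that does at least as well against $f$. The key structural fact is that $\wt f \le f$ with equality on the ``contact set'' where the convex envelope touches $f$, and that $\wt f$ is affine on each complementary interval. The plan is: (1) reduce to optimal $g$'s that are convex and piecewise linear, using the standard fact (already invoked in the paper, e.g. in the proof of Proposition~\ref{Prop-LD-derrida-I}) that $\int h(g')$ over a subinterval is minimized by linear interpolation of the endpoint values — so the minimizing $g$ for either $f$ or $\wt f$ can be taken convex; (2) for such a $g$, observe that on the contact set $f=\wt f$, so the quantity $f(x)-g(x)$ agrees with $\wt f(x)-g(x)$ there; (3) argue that the minimum of $\wt f - g$ over $[0,1]$ is attained on the contact set — this is the geometric heart of the matter. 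Indeed, if $g$ is convex and $\wt f - g$ attains its minimum at an interior point $x_0$ of a complementary interval $(a,b)$ where $\wt f$ is affine, then $\wt f - g$ is concave on $(a,b)$, so its minimum on $[a,b]$ is attained at $a$ or $b$, both of which lie in the (closed) contact set. Hence $\min_x(f(x)-g(x)) \le \min_{\text{contact}}(f-g) = \min_{\text{contact}}(\wt f - g) = \min_x(\wt f - g)$, and since $\log(\A\B)$ may be negative in the fan region, I must check that this inequality pushes $J^*$ in the favorable direction — here it does, because decreasing the $\min$ term while $\log(\A\B)<1$... again the sign of $\log(\A\B)$ versus $\min(f-g)$ must be tracked, and this is precisely where the fan-region hypothesis $\A\B<1$ enters.

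\textbf{Main obstacle.} The delicate point is the interplay between the three sign-sensitive ingredients: $\wt f\le f$ (so $f-g\ge \wt f-g$ pointwise, which moves the $\min$ term the ``wrong'' way when $\log(\A\B)<0$) versus the claim that the $\min$ is nonetheless attained on the contact set where the two agree. One must be careful that replacing $f$ by $\wt f$ does not secretly change $f(1)$ — it does not, since the convex envelope agrees with $f$ at the right endpoint $x=1$ (as $1$ is always in the contact set), so the $\log\B\,(f(1)-g(1))$ term is literally unchanged. So the whole content reduces to: for convex $g$, $\min_x(f-g)=\min_x(\wt f - g)$. I would prove this equality rather than just an inequality: ``$\le$'' is immediate from $\wt f\le f$; ``$\ge$'' follows from the concavity-on-complementary-intervals argument above, which shows the minimum of $\wt f - g$ migrates to the contact set where $\wt f = f$. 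With that equality in hand, $J^*(f,g)=J^*(\wt f, g)$ for all convex $g$, and since both infima are attained on convex $g$'s, \eqref{Step4.5} follows. The only real work is justifying the reduction to convex $g$ and handling the boundary cases ($g$ not differentiable, minima at endpoints, the affine pieces of $\wt f$ that touch $x=0$ or $x=1$).
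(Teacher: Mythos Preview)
Your central geometric observation is correct and does the real work: for any \emph{convex} $g$, on each component $(a,b)$ of $\{f>\wt f\}$ the function $\wt f-g$ is concave (affine minus convex), so its minimum on $[a,b]$ lands at an endpoint, which lies in the contact set $\{f=\wt f\}$. Hence $\min_{[0,1]}(\wt f-g)=\min_{[0,1]}(f-g)$, and since $f(1)=\wt f(1)$ this gives $J^*(f,g)=J^*(\wt f,g)$ for every convex $g$. (You mislabel which inequality is ``$\le$'' versus ``$\ge$'' in that identity, and you swap which direction of \eqref{Step4.5} is ``easy'' versus ``substantive''; these are cosmetic once the full equality for convex $g$ is established.)

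The genuine gap is the reduction to convex $g$. Your justification --- ``$\int h(g')$ over a subinterval is minimized by linear interpolation of endpoint values, so the minimizing $g$ can be taken convex and piecewise linear'' --- is a non sequitur: there is no finite set of interpolation nodes given in advance, piecewise-linear interpolants need not be convex, and the optimizer $g_*$ is in general not piecewise linear. The correct repair is to replace an arbitrary $g$ by its convex envelope $\hat g$ and check all three terms of $J^*$: one has $\hat g(0)=0$, $\hat g(1)=g(1)$ (so the $\log\B$ term is unchanged); $\hat g\le g$ gives $\min(f-\hat g)\ge\min(f-g)$, and $\log(\A\B)<0$ pushes that term down; Jensen on each interval where $\hat g$ is affine gives $\int h(\hat g')\le\int h(g')$. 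This restores your plan, and then $\inf_g J^*(f,g)=\inf_{\text{convex }g}J^*(f,g)=\inf_{\text{convex }g}J^*(\wt f,g)=\inf_g J^*(\wt f,g)$.

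The paper proceeds differently and avoids the convexity reduction altogether. It first observes $J^*(f,g)\le J^*(\wt f,g)$ for \emph{every} $g$ (from $f\ge\wt f$, $f(1)=\wt f(1)$, $\log(\A\B)<0$). For the reverse inequality, given any $g$ it builds $\wt g$ by linearizing $g$ on each component of $\{f>\wt f\}$ --- intervals determined by $f$, not by $g$ --- so that $\wt g=g$ on the contact set and $\wt f-\wt g$ is affine on each component with endpoint values matching $f-g$; this yields $\min(\wt f-\wt g)\ge\min(f-g)$ and, by Jensen, $\int h(\wt g')\le\int h(g')$, hence $J^*(\wt f,\wt g)\le J^*(f,g)$. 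Your route, once patched with the convex-envelope argument, is arguably cleaner; the paper's construction is more direct and sidesteps the small technicalities of the convex envelope (absolute continuity of $\hat g$, $\hat g'\in[0,1]$).
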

\begin{proof}
    For any $g$ the inequality $J^*(f,g)\leq J^*(\tilde f,g)$ is trivial because $f\geq \wt f$, so
$\min(f-g) \geq \min(\wt f -g)$, $\log(\A\B)<0$, and $f(1)=\wt f(1)$.

For the converse inequality, consider the set $U = \{f> \wt f\}$. This set is open and therefore is a disjoint union of open intervals, say $U = \bigcup J_k$. On each of these intervals, $\wt f$ is linear.
Define a function $G$ as follows: $G(x) = g'(x)$ for $x\not \in U$ and
$$
G(x)=\frac{g(w_k)-g(u_k)}{w_k-u_k} \mbox{ for } x\in J_k=(u_k,w_k).
$$
Then $\int_{J_k} G= g(w_k)-g(u_k)$. Define
$\wt g(x)=\int_0^x G.$
We have
$$
\wt g(x)=\int_{[0,x]\setminus U} g'(x)+ \sum_{k} \ind_{\{w_k<x\}}(g(w_k)-g(u_k))+\ind_{\{x\in J_k\}} (x-u_k) \frac{g(w_k)-g(u_k)}{w_k-u_k},
$$
thus $\wt g$ is linear on each $J_k$ and $\wt g = g$ on $[0,1]\setminus U$. Then $\wt f - \wt g$ is also linear on $J_k$, and
\begin{multline*}
  \min_{J_k}(\wt f - \wt g)= \min(\wt f(u_k) - \wt g(u_k), \wt f(w_k) - \wt g(w_k))
  \\=
\min(f(u_k) - g(u_k), f(w_k) - g(w_k)) \geq \min_{[0,1]}(f-g).
\end{multline*}

Thus, $\min_{[0,1]}(\wt f -\wt g) \geq \min_{[0,1]}(f-g)$.
From convexity of $h$, we have
$$
\int_{J_k} h(g') \geq |J_k| h\Big(\frac{1}{|J_k|}\int_{J_k} g'\Big)=
|J_k| h\Big(\frac{g(w_k)-g(u_k)}{w_k-u_k}\Big)=
\int_{J_k} h(\wt g').
$$
Thus
$$
\int_0^1 h(g') \geq \int_0^1 h(\wt g').
$$
Therefore, we obtain
$
J^*(f,g) \geq J^*(\wt f,\wt g)$, completing the proof of \eqref{Step4.5}.
\end{proof}
Clearly, $\frac{\A}{1+\A}<\frac{1}{1+\B}$. For clarity,  we write $G_*$ in expanded form
\begin{equation}
    \label{*G*}
    G_*(x)=\begin{cases}
        \frac{\A}{1+\A} & \wt f '(x)<\frac{\A}{1+\A},  \\ \\
        \wt f '(x) &  \frac{\A}{1+\A} \leq \wt f '(x)<\frac{1}{1+\B}, \\ \\
        \frac{1}{1+\B} &   \wt f '(x)\geq \frac{1}{1+\B},
    \end{cases}
\end{equation}
and we note that $G_*$ is nondecreasing and right-continuous.

We can now relate the functionals $J_*$ and $J^*$.
\begin{lemma}\label{Step 3}  Let  $g_*(x)=\int_0^x G_*$. Then
    \begin{equation}
        \label{P3}
        J^*(\wt f,g_*)=J_*(\wt f,G_*).
    \end{equation}
\end{lemma}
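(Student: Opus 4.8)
The plan is to reduce the identity \eqref{P3} to an elementary pointwise algebraic identity that telescopes against $\wt f-g_*$. Writing $h(y)=y\log y+(1-y)\log(1-y)$, one checks that for every $x$
\[
  \wt f'(x)\log G_*(x)+(1-\wt f'(x))\log(1-G_*(x)) - h(G_*(x)) = \big(\wt f'(x)-G_*(x)\big)\,\log\frac{G_*(x)}{1-G_*(x)},
\]
so, since $g_*'=G_*$, we get $J_*(\wt f,G_*)-\int_0^1 h(g_*'(x))\,dx = \int_0^1 \big(\wt f'(x)-G_*(x)\big)\log\tfrac{G_*(x)}{1-G_*(x)}\,dx$. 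It then suffices to show this last integral equals $\log(\A\B)\min_{0\le x\le 1}(\wt f(x)-g_*(x)) - \log\B\,(\wt f(1)-g_*(1))$, since adding $\int_0^1 h(g_*')$ to both sides turns the right-hand side into $J^*(\wt f,g_*)$.

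Next I would exploit the three-region description \eqref{*G*}. Because $\wt f'$ is nondecreasing and right-continuous and (as noted, using $\A\B<1$) $\tfrac{\A}{1+\A}<\tfrac1{1+\B}$, the set $\{\wt f'<\tfrac{\A}{1+\A}\}$ is an interval $[0,a)$, the set $\{\wt f'\ge\tfrac1{1+\B}\}$ is an interval $[b,1]$, and $a\le b$. On $[0,a)$ one has $G_*=\tfrac{\A}{1+\A}$ and $\tfrac{G_*}{1-G_*}=\A$; on $[b,1]$ one has $G_*=\tfrac1{1+\B}$ and $\tfrac{G_*}{1-G_*}=1/\B$; on $[a,b]$ one has $G_*=\wt f'$, so the integrand vanishes there. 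Using the fundamental theorem of calculus together with $\wt f(0)=0$, $g_*(x)=\tfrac{\A}{1+\A}x$ on $[0,a]$, and $g_*(1)-g_*(b)=\tfrac1{1+\B}(1-b)$, the two surviving integrals combine to
\[
  \int_0^1 (\wt f'-G_*)\log\tfrac{G_*}{1-G_*}\,dx = \log\A\,(\wt f(a)-g_*(a)) - \log\B\big((\wt f(1)-g_*(1)) - (\wt f(b)-g_*(b))\big).
\]

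The decisive simplification is that on $[a,b]$ we have $G_*=\wt f'$, hence $\wt f-g_*$ is constant there; call this common value $m:=\wt f(a)-g_*(a)=\wt f(b)-g_*(b)$. The displayed quantity then collapses to $\log(\A\B)\,m-\log\B\,(\wt f(1)-g_*(1))$, and it only remains to identify $m$ with $\min_{0\le x\le1}(\wt f(x)-g_*(x))$. This follows from monotonicity: the function $x\mapsto \wt f(x)-g_*(x)$ has derivative $\wt f'-G_*$, which is $\le0$ on $[0,a]$, $\equiv 0$ on $[a,b]$, and $\ge0$ on $[b,1]$, so it decreases to $m$, stays at $m$, then increases, and therefore attains its global minimum $m$. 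Combining the steps gives $J^*(\wt f,g_*)=\int_0^1 h(g_*')\,dx+\log(\A\B)\min(\wt f-g_*)-\log\B(\wt f(1)-g_*(1))=J_*(\wt f,G_*)$, which is \eqref{P3}. I expect the only delicate point to be the bookkeeping for degenerate or boundary configurations of the three intervals $[0,a)$, $[a,b]$, $[b,1]$ (e.g.\ $a=0$, $a=b$, or $b=1$) and the attendant right-continuity conventions for $\wt f'$; beyond that the argument is routine.
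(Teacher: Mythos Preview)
Your proof is correct and follows essentially the same approach as the paper: both split $[0,1]$ into the three regions determined by the thresholds $\tfrac{\A}{1+\A}$ and $\tfrac{1}{1+\B}$ (your $a,b$ are the paper's $x_1,x_2$), observe that $\wt f-g_*$ is monotone down, then constant, then up so that its minimum is the plateau value, and then match the resulting expressions. The only difference is organizational: you first subtract $\int_0^1 h(g_*')$ via the pointwise identity $\wt f'\log G_*+(1-\wt f')\log(1-G_*)-h(G_*)=(\wt f'-G_*)\log\tfrac{G_*}{1-G_*}$ and reduce to a single integral, whereas the paper expands $J_*(\wt f,G_*)$ and $J^*(\wt f,g_*)$ separately over the three regions and checks they agree term by term.
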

\begin{proof}
With the convention $\inf \emptyset=1$ and $\sup \emptyset=0$, let
\begin{equation}
    \label{x1x2}
    x_1= \inf \left\{ x\geq 0:  \wt f '(x)\geq \frac{\A}{1+\A}\right\}, \quad x_2= \sup\left\{ x\leq 1:  \wt f '(x)<\frac{1}{1+\B} \right\}
\end{equation}
be the largest interval $[x_1,x_2)$ on which $G_*=\wt f'$. Under this convention,
we have $x_1=x_2=1$ when $\wt f'<\A/(1+\A)$  on $[0,1]$ and $x_1=x_2=0$ when  $\wt f'\geq 1/(1+\B)$ on [0,1]. It is also possible that $G_*$ jumps from its lowest to its largest value at some $x_*$ in which case we have $x_1=x_2=x_*$.

    From \eqref{x1x2} we see that $\wt f' < G_* = g_*'$ on $[0,x_1)$, $\wt f' = G_* = g_*'$ on $[x_1,x_2)$, and $\wt f' > G_* = g_*'$ on $(x_2,1]$. Then $\wt f - g_*$ decreases on $[0,x_1)$, is constant on $[x_1,x_2]$, and increases on $(x_2,1]$. Thus the minimal value of $\wt f - g_*$ is attained on the entire interval $[x_1,x_2]$.

Identity \eqref{P3} now follows by direct calculation.
We have
\begin{multline}\label{*J*}
    J_*(\wt f,G_*)=\int_0^{x_1} \left(\wt f' \log \frac{\A}{1+\A}+(1-\wt f')\log \frac{1}{1+\A}\right)
    +\int_{x_1}^{x_2} h(\wt f')
    \\+ \int_{x_2}^1 \left(\wt f' \log\frac{1}{1+\B}+(1-\wt f')\log \frac{\B}{1+\B}\right)\\
    =x_1 \log \frac1{1+\A}+\wt f(x_1) \log \A +  \int_{x_1}^{x_2} h(\wt f') +(1-x_2)\log \frac{\B}{1+\B} -(\wt f(1)-\wt f(x_2))\log \B .
\end{multline}
On the other hand, since the minimum of $\wt f - g_*$ is attained at all points of $[x_1,x_2]$, we get
\begin{multline}\label{*min}
   \log(\A\B) \min_{[0,1]}(\wt f-g_*)-(\wt f(1)-g_*(1))\log\B
\\ =(\wt f(x_1)-g_*(x_1))\log\A -(\wt f(1)-\wt f(x_2))\log\B+ (g_*(1)-g_*(x_2))\log\B.
\end{multline}
 Since
$g_*(x_1)=x_1 \frac{\A}{1+\A}$ and $g_*(1)-g_*(x_2)=(1-x_2)\frac1{1+\B}$,
 \begin{multline}\label{*int}
      \int_0^1 h(g_*')=\int_0^{x_1} h\left(\frac{\A}{1+\A}\right)+\int_{x_1}^{x_2} h(\wt f')+ \int_{x_2}^1 h\left(\frac{1}{1+\B}\right)
 \\= x_1 \log \frac{1}{1+\A}+x_1\frac{\A}{1+\A}\log\A +\int_{x_1}^{x_2} h(\wt f')- (1-x_2)\frac{1}{1+\B}\log\B +(1-x_2) \log \frac{\B}{1+\B}
 \\= x_1 \log \frac{1}{1+\A}+g_*(x_1)\log\A+\int_{x_1}^{x_2} h(\wt f')- (g_*(1)-g_*(x_2))\log\B+(1-x_2) \log \frac{\B}{1+\B}.
 \end{multline}
Combining \eqref{*min} and \eqref{*int}, we get
$$
J^*(\wt f, g_*)=x_1 \log \frac{1}{1+\A}+\wt f(x_1)\log\A +\int_{x_1}^{x_2} h(\wt f')+(1-x_2) \log \frac{\B}{1+\B}-(\wt f(1)-\wt f(x_2))\log\B
$$
which ends the proof by \eqref{*J*}.
\end{proof}

With \eqref{Step4.5} and \eqref{P3} at hand, to complete the proof of \eqref{Pre-I}, we need to show that for any $g\in \mathcal{AC}_0$ the following inequality holds:
\begin{equation}\label{*Pavel3}
 J^*(\wt f,g)\geq J^*(\wt f, g_*),
  \end{equation}
hence, by \eqref{Step4.5}, the infimum $\inf_{g} J^*(f,g)=\inf_{g} J^*(\wt f,g)$ is attained at $g_*$.

To prove \eqref{*Pavel3}, we fix $g\in\mathcal{AC}_0$ and consider the function
$$
\ff(\tau) = J^*(\wt f, \tau g + (1-\tau)g_*), \qquad  \tau \in [0,1].
$$
Note that this function is convex because the functional $J^*(\wt f, \,\cdot\,)$ is convex. We claim that  the right-derivative $\partial_+\ff(0) \geq 0$. When this is proved, the convexity implies that $\ff$ is increasing on $[0,1]$, and therefore
$$
 J^*(\wt f, g) =  \ff(1)\geq \ff(0) =  J^*(\wt f, g_*).
$$

We need the following technical lemma.
\begin{lemma}\label{lem1}
    Let $\phi$ and $\psi$ be  continuous functions on $[0,1]$. Let $\fa  = \{x \in [0,1]\colon \phi(x) = \min\limits_{[0,1]} \phi\}$. Then
    $$
    \min_{[0,1]}(\phi - \tau \psi) = \min_{[0,1]}\phi - \tau \cdot\max_{\fa}\psi +o(\tau) \quad  \mbox{ as } \tau \to 0^+.
    $$
\end{lemma}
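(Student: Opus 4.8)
The plan is to treat this as a Danskin-type statement about one-sided differentiation of the functional $g \mapsto \min_{[0,1]} g$, and to establish the two inequalities separately. Throughout, I would write $m := \min_{[0,1]}\phi$ and note at the outset that $\fa$ is nonempty and compact, being the preimage of $\{m\}$ under the continuous function $\phi$ on the compact interval $[0,1]$; hence $M := \max_{\fa}\psi$ is finite and attained at some point $x_0 \in \fa$.

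The upper bound is immediate: evaluating $\phi - \tau\psi$ at $x_0$ gives $\min_{[0,1]}(\phi - \tau\psi) \le \phi(x_0) - \tau\psi(x_0) = m - \tau M$ for every $\tau \ge 0$, which is even slightly stronger than the claimed bound $\min_{[0,1]}(\phi - \tau\psi) \le m - \tau M + o(\tau)$.

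For the matching lower bound I would argue by contradiction. If it fails, there are $\eps > 0$ and a sequence $\tau_n \downarrow 0$ with $\min_{[0,1]}(\phi - \tau_n\psi) < m - \tau_n M - \eps\tau_n$. Choosing, by compactness of $[0,1]$ and continuity, a minimizer $x_n$ of $\phi - \tau_n\psi$, I would extract from this single inequality two pieces of information: first, since $\phi(x_n) \ge m$, dividing through by $-\tau_n$ shows $\psi(x_n) > M + \eps$; second, since $\phi(x_n) < m + \tau_n\psi(x_n) \le m + \tau_n\|\psi\|_\infty$, the squeeze $m \le \phi(x_n) < m + \tau_n\|\psi\|_\infty$ gives $\phi(x_n) \to m$. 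Passing to a subsequence along which $x_n \to x^* \in [0,1]$, continuity of $\phi$ forces $\phi(x^*) = m$, i.e. $x^* \in \fa$, while continuity of $\psi$ gives $\psi(x^*) \ge M + \eps$, contradicting $\psi(x^*) \le \max_{\fa}\psi = M$. This yields $\min_{[0,1]}(\phi - \tau\psi) \ge m - \tau M + o(\tau)$, and combined with the upper bound it proves the claim.

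The argument needs no auxiliary lemmas. The only step requiring a little care is the simultaneous extraction, in the contradiction argument, of both ``$\psi(x_n)$ is bounded above away from $M$'' and ``$\phi(x_n) \to m$'' from the single hypothesis on $\min_{[0,1]}(\phi - \tau_n\psi)$; once both are in hand, the compactness-and-continuity conclusion is routine.
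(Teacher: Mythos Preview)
Your proof is correct. The upper bound matches the paper's verbatim. For the lower bound, however, you and the paper take genuinely different routes: the paper argues directly, fixing $\eps_1>0$, enlarging $\fa$ to a $\delta$-neighborhood $\fa_\delta$ on which $\max_{\fa_\delta}\psi\le \max_{\fa}\psi+\eps_1$, and then splitting $[0,1]$ into $\fa_\delta$ and its complement to bound $\min_{[0,1]}(\phi-\tau\psi)$ from below explicitly for all small $\tau$; your argument instead proceeds by contradiction and sequential compactness, extracting a minimizer $x_n$, showing $\phi(x_n)\to m$ and $\psi(x_n)>M+\eps$, and passing to a subsequential limit $x^*\in\fa$. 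The paper's approach is slightly more constructive (it yields an explicit bound $\min_{[0,1]}(\phi-\tau\psi)\ge m-\tau M-\tau\eps_1$ valid for all sufficiently small $\tau$), while yours is cleaner to write and is the standard Danskin-theorem mechanism. Both are equally elementary and of comparable length.
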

\begin{proof}
First, the set $\fa$ is closed and therefore $\psi$ reaches the maximum on it, say, at $z$. Then for $\tau \geq 0$ we have
$$
\min_{[0,1]}(\phi-\tau\psi) \leq \phi(z)-\tau \psi(z) = \min_{[0,1]}\phi - \tau \cdot\max_{\fa}\psi.
$$
To prove the converse estimate, let us fix any $\eps_1>0$ and using the continuity of $\psi$ find $\delta>0$ such that
$
\max_{\fa_\delta}\psi \leq \max_{\fa}\psi + \eps_1,
$
where $\fa_\delta = \{y \in [0,1]\colon \mathrm{dist}(y,\fa)< \delta\}$.
Then use continuity of $\phi$ to find $\eps_2>0$ such that
$\min_{[0,1]\setminus \fa_\delta}\phi>\min_{\fa}\phi + \eps_2$.
For a small positive $\tau$ we have
\begin{align*}
    \min_{[0,1]\setminus \fa_\delta}(\phi-\tau\psi) \geq \min_{\fa}\phi + \eps_2 - \tau \max_{[0,1]}\psi \geq \min_{[0,1]}\phi - \tau \max_{\fa}\psi;\\
    \min_{\fa_\delta}\ (\phi-\tau\psi) \geq \min_{\fa}\phi - \tau \max_{\fa_\delta}\psi \geq \min_{[0,1]}\phi - \tau \max_{\fa}\psi - \tau\eps_1.
    \end{align*}
Combining these inequalities, we obtain
$$
\liminf_{\tau \to 0+} \frac{\min_{[0,1]}(\phi-\tau\psi) - \min_{[0,1]}\phi}{\tau}\geq -\max_{A}\psi - \eps_1.
$$
Tending $\eps_1$ to zero, we finish the proof.
\end{proof}

 \begin{proof}[Proof of Proposition \ref{Prop-LD-derrida-II-a}]
We apply Lemma~\ref{lem1} with $\phi = \wt f-g_*$ and $\psi = g-g_*$ to obtain
$$
\min_{[0,1]}(\wt f - (\tau g + (1-\tau)g_*)) = \min_{[0,1]}(\wt f - g_* - \tau (g -g_*)) =  \min_{[0,1]}(\wt f - g_*) - \tau \max_{\fa}(g -g_*) + o(\tau), \;\tau \to 0^+,
$$
where $\fa = \{x \in [0,1]\colon \wt f(x) - g_*(x) = \min\limits_{[0,1]} (\wt f - g_*)\}=[x_1,x_2]$.
We use this formula to calculate the right derivative of $\ff$ at $0$: \footnotesize
\begin{align}
\partial_+\ff(0) =& \int_0^1 (g'-g_*')\log\Big(\frac{g_*'}{1-g_*'}\Big)- \log(\A\B)\max_{[x_1,x_2]}(g -g_*) +(g(1)-g_*(1))\log \B \notag\\
=& \int_0^1 (g'-g_*')\log\Big(\frac{\B g_*'}{1-g_*'}\Big) - \log(\A\B)\max_{[x_1,x_2]}(g -g_*)\notag\\
=& \int_0^{x_1} (g'-g_*')\log\Big(\frac{\B \frac{\A}{\A+1}}{1-\frac{\A}{\A+1}}\Big)+
\int_{x_1}^{x_2} (g'-g_*')\log\Big(\frac{\B g_*'}{1-g_*'}\Big)+
\int_{x_2}^{1} (g'-g_*')\log\Big(\frac{\B \frac{1}{\B+1}}{1-\frac{1}{\B+1}}\Big)
\notag \\ &- \log(\A\B)\max_{[x_1,x_2]}(g -g_*)\notag\\
=& \log(\A\B)\int_0^{x_1} (g'-g_*')+
\int_{x_1}^{x_2} (g'-g_*')\log\Big(\frac{\B g_*'}{1-g_*'}\Big)- \log(\A\B)\max_{[x_1,x_2]}(g -g_*)\notag\\
=&\int_{x_1}^{x_2} (g'-g_*')\log\Big(\frac{\B g_*'}{1-g_*'}\Big)- \log(\A\B)\Big(\max_{[x_1,x_2]}(g -g_*) - \big(g(x_1)-g_*(x_1)\big)\Big).\label{eq4}
\end{align} \normalsize
If $x_1=x_2$, this gives  $\partial_+\ff(0) =0$. If $x_1<x_2$, we proceed as follows.

Consider the function $\Phi=\log\Big(\frac{\B g_*'}{1-g_*'}\Big)$   on $[x_1,x_2]$.
It is a nondecreasing right-continuous function
with values $\Phi(x_1)\geq \log(\A\B)$ and $\Phi(x_2)\leq 0$. Write
$\Phi(x)-\Phi(x_1)=\mu([x_1,x])$, where $\mu$ is a non-negative
measure of total mass $\mu([x_1,x_2])\in [0, - \log (\A\B)]$  on Borel subsets of $[x_1,x_2]$.

Write $\psi=g'-g'_*$ and $\Psi(x)=\int_{x_1}^x \psi = g(x)-g_*(x)-(g(x_1)-g_*(x_1))$.

Since  $\Phi(x)-\Phi(x_1)=\mu([x_1,x])$ for $x\in[x_1,x_2]$ and $\Psi(x_1)=0$, Fubini' theorem
gives
\begin{multline*}
   L:=\int_{x_1}^{x_2}\psi \Phi=\int_{x_1}^{x_2} \psi(x)\left(\Phi(x_1)+ \int_{[x_1,x_2]} 1_{t\leq x} d\mu(t)\right)dx
\\ =\Psi(x_2)\Phi(x_1) +\int_{[x_1,x_2]} \int_t^{x_2} \psi(x)dxd\mu(t)
\\=\Psi(x_2)\Phi(x_1) +\int_{[x_1,x_2]}(\Psi(x_2)-\Psi(t))d\mu(t)
=\Psi(x_2)\Phi(x_2)-\int_{[x_1,x_2]} \Psi(t)d\mu(t).
\end{multline*}

Since $\Phi(x_2)\leq 0$, we get $\Psi(x_2)\Phi(x_2)\geq \Phi(x_2)\max_{[x_1,x_2]}\Psi$, therefore
$$
L = \Psi(x_2)\Phi(x_2) - \int_{[x_1,x_2]} \Psi(z) \ d\mu(z) \geq \left(\Phi(x_2)-\mu([x_1,x_2])\right) \max_{[x_1,x_2]}\Psi .
$$
Thus
$$\int_{x_1}^{x_2}\psi \Phi \geq \Phi(x_1)\max_{[x_1,x_2]}\Psi.$$
Returning back to \eqref{eq4}, we see that since $\max_{[x_1,x_2]}\Psi\geq \Psi(x_1)=0$, and
$\Phi(x_1)\geq \log (\A\B)$, we have
$$\partial_+\ff(0)=\int_{x_1}^{x_2}\psi \Phi -\log(\A\B)\max_{[x_1,x_2]}\Psi\geq
\left(\Phi(x_1) -\log(\A\B)\right)
\max_{[x_1,x_2]}\Psi\geq 0.$$
This proves \eqref{*Pavel3}.
To prove \eqref{Pre-I}, we combine \eqref{Step4.5}, \eqref{P3}, and  \eqref{*Pavel3}.
This concludes the proof of Proposition \ref{Prop-LD-derrida-II-a}.
\end{proof}
\subsection{Large Deviations for the mean particle density}
 The mean particle density  is
 $$\frac{1}N\sum_{j=1}^N \tau_j=\tfrac1N H_N(N)=\tfrac 1N \wt H_N(1).$$
 The following proposition,  recalculated from   \cite[formula (3.12)]{derrida2003-Exact-LDP}, gives
explicit formula for the rate function of the mean particle density in the fan region of TASEP.
An equivalent result with a different proof appeared in \cite[Theorem 7]{Bryc-Wesolowski-2015-asep}.
\hide
{ \arxiv{
 The notation used in \cite{derrida2003-Exact-LDP} is
 $\rho_a=1/(1+\A)$, $\rho_b=\frac{\B}{1+\B}$ and
 $h(r,f,g)=h(r|f)+\log (f(1-f))-\log(g(1-g))$ (recalculated), and we use the same
 $\bar \rho$ in \eqref{rho-bar} that they introduced.
 According to \cite[formula (3.12)]{derrida2003-Exact-LDP}, the rate function on constant profile $f(x)=r x$ is
$$
   \mathcal{I}(r)=\begin{cases}
     h(r|\tfrac{1}{1+\A})+ \log \tfrac{\A}{(1+\A)^2}-\log (\bar\rho(1-\bar \rho)) & 0 \leq r\leq 1-\rho_a=\tfrac{\A}{1+\A}     \\
 h(r,1-r,\bar\rho)& \tfrac{\A}{1+\A} \leq r\leq \tfrac{1}{1+\B} \\
 h(r|\tfrac{\B}{1+\B})+ \log\tfrac{\B}{(1+\B)^2} -\log (\bar\rho(1-\bar \rho))   &  \tfrac{1}{1+\B} < r\leq 1   \end{cases}
$$
Since $K(\A,\B)=\log(\bar\rho(1-\bar\rho))$   we only need to verify that the middle expressions agree:
$$
h(r,1-r,\bar\rho)=r \log \tfrac{r}{1-r}+ (1-r) \log \tfrac{1-r}{r}+\log (r(1-r))-\log(\bar\rho (1-\bar\rho))
= 2 h(r|\tfrac12) -\log 4 - K(\A,\B).
$$
Discussion
in \cite[Appendix B]{derrida2003-Exact-LDP} identifies this expression as the rate function for the mean density.
 }
 }
 \begin{proposition}
   \label{Prop-B-W}
   If $\A\B\leq 1$, then the mean particle density $\frac1NH_N(N)$ satisfies the large deviation principle with the rate function
  \begin{equation}
  \label{I-BW}
  \mathsf{I}(r)=-K(\A,\B) +\begin{cases}
   h(r|\tfrac{1}{1+\A})  +\log \tfrac{\A}{(1+\A)^2}& 0\leq r< \tfrac{\A}{1+\A},\\
   2 h(r|\tfrac12)   +\log \tfrac14 &    \tfrac{\A}{1+\A}\leq r \leq \tfrac{1}{1+\B}, \\
   h(r|\tfrac{\B}{1+\B})  +\log \tfrac{\B}{(1+\B)^2} & \tfrac{1}{1+\B}< r \leq 1,
  \end{cases}
\end{equation}
with $K(\A,\B)$ given by \eqref{K:ab<1} and entropy $h(\,\cdot\,|\,\cdot\,)$ given by \eqref{H-entr}.
 \end{proposition}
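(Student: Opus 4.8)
The plan is to obtain this from the functional LDP of Theorem~\ref{Cor:LDP4H} by the contraction principle, and then to carry out the resulting one–dimensional minimization of the rate function. Since the mean particle density equals $\tfrac1N\wt H_N(1)$ and the evaluation map $C[0,1]\ni f\mapsto f(1)\in\RR$ is continuous, applying the contraction principle (\cite[Theorem 4.2.1]{DZ-1998-large}) to Theorem~\ref{Cor:LDP4H} (valid for all $\A,\B>0$) gives the LDP for $\tfrac1NH_N(N)$ with good rate function
\[
\mathsf I(r)=\inf\{\calI(f):\ f\in\mathcal{AC}_0,\ f(1)=r\},\qquad r\in[0,1],
\]
and since $h=+\infty$ off $[0,1]$, only profiles with $0\le f'\le 1$ a.e.\ contribute.

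Next I would invoke the explicit form of $\calI$ obtained in the proof of Proposition~\ref{Prop-LD-derrida-II-a}: for $\A\B<1$ and $f\in\mathcal{AC}_0$ with $0\le f'\le1$,
\[
\calI(f)=\int_0^1 h(f'(x))\,dx+\int_0^1\Lambda(\wt f'(x))\,dx-K_0(\A,\B),\qquad
\Lambda(t)=t\log G(t)+(1-t)\log\bigl(1-G(t)\bigr),
\]
where $G(t)=\bigl(t\vee\tfrac{\A}{1+\A}\bigr)\wedge\tfrac1{1+\B}$, $\wt f$ is the convex envelope of $f$, and $K_0$ is the normalizing constant \eqref{I_0}; the boundary case $\A\B=1$ is treated by continuity in $(\A,\B)$ (or directly, as then $\log(\A\B)=0$). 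Two elementary facts are needed here. First, the convex envelope satisfies $\wt f(0)=0$ and $\wt f(1)=f(1)=r$, so $\int_0^1\wt f'=r$. Second, $\Lambda$ is convex on $[0,1]$: it is affine on $[0,\tfrac{\A}{1+\A}]$ and on $[\tfrac1{1+\B},1]$, it coincides with the convex function $h$ on $[\tfrac{\A}{1+\A},\tfrac1{1+\B}]$, and it is $C^1$ at the two junction points because $h'(t)=\log\tfrac t{1-t}$ equals the slopes of the adjacent affine pieces there; hence $\Lambda'$ is nondecreasing.

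With these in hand the minimization is immediate. By convexity of $h$ and of $\Lambda$ and Jensen's inequality,
\[
\int_0^1 h(f')\,dx\ \ge\ h\!\Bigl(\int_0^1 f'\Bigr)=h(r),\qquad
\int_0^1\Lambda(\wt f')\,dx\ \ge\ \Lambda\!\Bigl(\int_0^1\wt f'\Bigr)=\Lambda(r),
\]
and both inequalities are equalities for the linear profile $f(x)=rx$ (for which $\wt f=f$). Hence the infimum defining $\mathsf I(r)$ is attained at a straight line and $\mathsf I(r)=h(r)+\Lambda(r)-K_0(\A,\B)$. Evaluating $G(r)$ on the three ranges $r\in[0,\tfrac{\A}{1+\A}]$, $[\tfrac{\A}{1+\A},\tfrac1{1+\B}]$, $[\tfrac1{1+\B},1]$ (where $G(r)$ is $\tfrac{\A}{1+\A}$, $r$, $\tfrac1{1+\B}$ respectively) and rewriting $h(r)+\Lambda(r)$ with the help of \eqref{H-entr} reproduces exactly the three–case bracket of \eqref{I-BW}; for instance on the first range $h(r)+\Lambda(r)=r\log r+(1-r)\log(1-r)+r\log\A-\log(1+\A)=h\bigl(r\,\big|\,\tfrac1{1+\A}\bigr)+\log\tfrac{\A}{(1+\A)^2}$, and similarly on the other two. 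Finally $K_0(\A,\B)$ is pinned down by the fact that a good rate function of probability measures has infimum $0$: $\inf_{0\le r\le1}\mathsf I(r)=0$, so $K_0(\A,\B)=\inf_{0\le r\le1}\bigl(h(r)+\Lambda(r)\bigr)$; using the three–case expression together with the nonnegativity of the relative–entropy terms $h(r|\cdot)$ and the monotonicity of $h$ off $\tfrac12$, the minimizer is $r=\tfrac1{1+\A}$ if $\A>1$, $r=\tfrac12$ if $\A,\B\le1$, $r=\tfrac\B{1+\B}$ if $\B>1$, which yields
\[
K_0(\A,\B)=\sup_{\frac{\B}{1+\B}\le\rho\le\frac1{1+\A}}\log\rho(1-\rho),
\]
equal to the explicit value in \eqref{K:ab<1} and to $\log(\bar\rho(1-\bar\rho))$ with $\bar\rho$ as in \eqref{rho-bar}; this also completes the postponed identification $K_0=K$ for $\A\B\le1$.

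The main obstacle is recognizing that the convex–envelope correction in $\calI$ is free for the contraction: this is exactly the convexity of $\Lambda$, which lets Jensen's inequality apply to $\int_0^1\Lambda(\wt f')$ even though $\wt f'$ is the derivative of the envelope rather than of $f$, and is what forces linear profiles to be optimal. The remaining relative–entropy bookkeeping in the three ranges and the evaluation of the constant are routine.
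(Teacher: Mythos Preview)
Your argument is correct, but it takes a different route from the paper. The paper goes back to the \emph{two–line} rate function \eqref{I} and applies the contraction principle directly to the map $(f_1,f_2)\mapsto f_1(1)$: it shows that since $\A\B\le1$, the term $\log(\A\B)\min(f_1-f_2)$ only decreases when $f_1,f_2$ are replaced by linear functions, reduces to a one–dimensional minimization over the slope $m=f_2(1)$, and then optimizes over $m$ by a case split $m\ge r$ versus $m<r$. You instead start from the already–optimized \emph{one–line} rate function of Proposition~\ref{Prop-LD-derrida-II-a}, recognize that the resulting integrand $\Lambda(\wt f')$ is convex in its argument (because the linear pieces are tangent to $h$ at the junction points), and then Jensen forces linear profiles to be optimal.

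Your approach is slicker once Proposition~\ref{Prop-LD-derrida-II-a} is in hand, and the convexity of $\Lambda$ is a nice observation that makes the contraction immediate. The paper's approach is more self–contained (it needs only the two–line rate function, not the convex–envelope formula) and treats the boundary case $\A\B=1$ uniformly, whereas you must pass to it by continuity since Proposition~\ref{Prop-LD-derrida-II-a} is stated for strict inequality; at $\A\B=1$ the middle interval $[\tfrac{\A}{1+\A},\tfrac1{1+\B}]$ collapses, but the limit is harmless. Both derivations identify $K_0(\A,\B)$ at the end from $\inf_r\mathsf I(r)=0$, completing the postponed verification of \eqref{Lab} for $\A\B\le1$.
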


Although the result is known, we re-derive formula \eqref{I-BW} from \eqref{I} for the special case of TASEP, as the  argument
  establishes   \eqref{K:ab<1}, and hence we will conclude the derivation of formula \eqref{Lab}  for $\A\B\leq  1$.

 \begin{proof}[Proof of Proposition \ref{Prop-B-W}]
 We use \eqref{I} and contraction principle. To avoid circular reasoning,
 we use \eqref{I} with $K_0(\A,\B)$ given by \eqref{I_0}. (In fact, we leave $K_0(\A,\B)$
  as a free parameter to be determined at the end of the proof.)

  Through the proof, we fix $r\in[0,1]$.
We write $\mathsf{I}(r)$ as the infimum over $m\in[0,1]$ and over all functions $f_1,f_2\in\mathcal{AC}_0$ such that $f_1(1)=r$, $f_2(1)=m$. The first step is to show that optimal $f_1,f_2$ are linear.
 To do so we note that since $\A\B\leq 1$, we have
 $$
 \log(\A\B)\min_{y\in[0,1]}\{f_1(y)-f_2(y)\}\geq \log(\A\B)\min\{0,(r-m)\}
 $$
 with equality on linear functions.
Therefore, the expression $\log(\A\B)\min_y\{f_1(y)-f_2(y)\}$ can only decrease if we replace
$f_1,f_2$ with a pair of linear functions $f(x)=rx$ and $g(x)=mx$. In view of convexity of $h$, this replacement also
decreases the integral in \eqref{I}. This shows that the optimal functions $f_1,f_2$ are indeed linear, $f(x)=rx$ and $g(x)=mx$.
We get %
\[
  \mathsf{I}(r)=-K_0(\A,\B) +h(r)+\min_{m\in[0,1]}\left\{h(m)+\max_{y\in[0,1]}\{  y \log(\A\B)(r-m)-\log \B (r-m) \}\right\}.
\]
 The maximum over $y\in[0,1]$ is attained at the end points of $[0,1]$ and since $\log(\A\B)\leq 0$, it is  either $(r-m) \log\A $ or $(m-r) \log \B$ depending on whether $m\geq r $ or $m<r$. (Recall that $r\in[0,1]$ is fixed.)

 In the first case, the infimum over $m\geq r$ is attained at
$$m= \begin{cases}
\tfrac{\A}{1+\A} & \mbox{if $r< \tfrac{\A}{1+\A}$},\\
r & \mbox{if $r\geq  \tfrac{\A}{1+\A}$},
\end{cases} $$
and gives
$$ \mathsf{I_1}(r)=-K_0(\A,\B)+ \begin{cases}
 h(r)+r \log \A-\log(1+\A)& r< \tfrac{\A}{1+\A},\\
   2 h(r) & r\geq  \tfrac{\A}{1+\A},
\end{cases}$$
with  $K_0(\A,\B)$ given by \eqref{I_0}. Note that
$$ h(r)+r \log \A-\log(1+\A) = h(r|\tfrac{1}{1+\A})+ \log \tfrac{\A}{(1+\A)^2}. $$

In the second  case, the minimum over $m\leq r$ is attained at
$$m= \begin{cases}
\tfrac{1}{1+\B} & \mbox{if $r> \tfrac{1}{1+\B}$},\\
r & \mbox{if $r\leq  \tfrac{1}{1+\B}$},
\end{cases}$$
and gives
$$\mathsf{I_2}(r)=-K_0(\A,\B)+ \begin{cases}
 h(r) -r \log (\B)+\log \tfrac{\B}{\B+1} & r> \tfrac{1}{1+\B}, \\
2 h(r) & r\leq  \tfrac{1}{1+\B}.
\end{cases} $$
Note that
$$ h(r) -r \log (\B)+\log \tfrac{\B}{\B+1}=h(r|\tfrac{\B}{1+\B})+\log \tfrac{\B}{(1+\B)^2}.$$
Also, note that $h(r)=h(r|\tfrac12)-\log 2$.
 Since we are interested in overall minimum over all $m\in[0,1]$, up to the additive normalizing constant $K_0(\A,\B)$,
 the resulting rate function is
  $$\mathsf{I}(r)=\min\{\mathsf{I_1}(r),\mathsf{I_2}(r)\}=-K_0(\A,\B)+\begin{cases}
   h(r|\tfrac{1}{1+\A})+ \log \tfrac{\A}{(1+\A)^2} &  0\leq r< \tfrac{\A}{1+\A}, \\
  h(r|\tfrac{\B}{1+\B})+\log \tfrac{\B}{(1+\B)^2} &  \tfrac{1}{1+\B}<r\leq 1,  \\
       2 h(r|2)-\log 4  &\mbox{ ortherwise},\\
  \end{cases}$$
   in agreement with \eqref{I-BW}.

The additive normalization constant $K_0(\A,\B)$ can now
be determined  from the condition that $\inf_{r\in[0,1]} \mathsf{I}(r)=0$. This gives \eqref{K:ab<1} as follows:

\[
K_0(\A,\B)=\inf_{r\in[0,1]}\begin{cases}
   h(r|\tfrac{1}{1+\A})+ \log \tfrac{\A}{(1+\A)^2} &  0\leq r< \tfrac{\A}{1+\A}, \\
  h(r|\tfrac{\B}{1+\B})+\log \tfrac{\B}{(1+\B)^2} &  \tfrac{1}{1+\B}<r\leq 1,  \\
       2 h(r|\tfrac12)-\log 4  & \tfrac{\A}{1+\A} \leq r \leq \tfrac{1}{1+\B}.
       \end{cases}
\]
Thus $K_0(\A,\B)=\log(\bar \rho(1-\bar\rho))$, matching \eqref{K:ab<1}. We note that this establishes   \eqref{Lab}  for $\A\B\leq 1$.
\arxiv{Here are the details of this calculation: $K_0(\A,\B)$ is the minimum of three expressions.
The first expression is
\begin{multline}\label{K-min1}
  \log \tfrac{\A}{(1+\A)^2}+\inf_{0\leq r<\A/(1+\A)} h(r|\tfrac{1}{1+\A})=
 \log \tfrac{\A}{(1+\A)^2}+ \begin{cases}
  h(\tfrac{1}{1+\A}|\tfrac{1}{1+\A}) & \A>1 \\
     h(\tfrac{\A}{1+\A}|\tfrac{1}{1+\A}) & \A\leq 1
  \end{cases}
 \\ =\begin{cases}
  \log \tfrac{\A}{(1+\A)^2} & \A>1 \\
  \tfrac{2 \A \log \A}{1+\A}-2 \log (1+\A) & \A\leq 1.
  \end{cases}
\end{multline}
The second expression is
\begin{multline}\label{K-min2}
  \log \tfrac{\B}{(1+\B)^2}+\inf_{\tfrac{1}{1+\B}< r\leq 1} h(r|\tfrac{\B}{1+\B})=
 \log \tfrac{\B}{(1+\B)^2}+ \begin{cases}
  h(\tfrac{\B}{1+\B}|\tfrac{\B}{1+\B}) & \B>1 \\
     h(\tfrac{1}{1+\B}|\tfrac{\B}{1+\B}) & \B\leq 1
  \end{cases}
 \\ =\begin{cases}
  \log \tfrac{\B}{(1+\B)^2} & \B>1 \\
 \tfrac{2 \B \log \B}{1+\B}-2 \log (1+\B) & \B\leq 1,
  \end{cases}
\end{multline}
and the third expression is
\begin{equation}
  \label{K-min3}
  \inf_{\tfrac{\A}{1+\A}\leq r\leq \tfrac{1}{1+b}} 2 h(r|\tfrac12)-\log 4=\begin{cases}
    \infty & \A>1 \mbox{ or } \B>1, \\
   -\log 4 & \A\leq 1, \B\leq 1.
  \end{cases}
  \end{equation}
  Noting that
$$
\frac{d}{d\B}\tfrac{2 \B \log \B}{1+\B}-2 \log (1+\B)=\frac{2 \log \B}{(\B+1)^2}<0 \mbox{ for $\B\in(0,1)$},
$$
we see that if $\B<1$ then the expression \eqref{K-min2} is larger than its value $-2 \log 2$ at $\B=1$.
On the other hand, for $\B>1$, the expression \eqref{K-min2} decreases, so it is less than its value $-2 \log 2$ at $\B=1$, and it is also less than the value of the expression \eqref{K-min1} when $\B>\A\geq 1$.

Since the same reasoning applies to \eqref{K-min1}, we see that for $(\A,\B)$ in each of the three regions of the phase diagram in Fig. \ref{Fig2} we get $K_0(\A,\B)=\log (\bar\rho(\A,\B)(1-\bar\rho(\A,\B)))$, as listed in \eqref{rho-bar}.
}
\end{proof}

 The LDP for the mean particle density in the shock region follows from Proposition \ref{Prop-LD-derrida-I} by the contraction principle.
The following proposition, recalculated from \cite[(B.8)]{derrida2003-Exact-LDP}, gives an
explicit formula for the rate function. 
  \begin{proposition}
   \label{Prop-B-W++}
   If $\A\B>1$, then
   the mean particle density $\frac1NH_N(N)$ satisfies the large deviation principle with the rate function
\begin{equation}\label{I-BW++}
    \mathsf{I}(r)=\begin{cases}
    h(r|\tfrac{1}{1+\A})+\log\tfrac{\A}{(1+\A)^2}-K(\A,\B) & 0\leq r \leq \tfrac{1}{1+\B}, \\
     r \log \tfrac{\A}{\B}+ \log \tfrac{\B}{(1+\A)(1+\B)}-K(\A,\B)& \tfrac{1}{1+\B}\leq  r \leq \tfrac{\A}{1+\A}, \\
    h(r|\tfrac{\B}{1+\B})+\log\tfrac{\B}{(1+\B)^2}-K(\A,\B) &  \tfrac{\A}{1+\A} \leq r\leq 1,
  \end{cases}
\end{equation}
with $\mathsf{I}=\infty$ for $r\not\in[0,1]$.
 \end{proposition}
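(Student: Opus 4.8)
The plan is to obtain this from Proposition~\ref{Prop-LD-derrida-I} (equivalently from Theorem~\ref{Cor:LDP4H}) via the contraction principle. The quantity $\tfrac1N H_N(N)=\tfrac1N\wt H_N(1)$ is the image of $\tfrac1N\wt H_N$ under the evaluation map $\pi_1\colon C[0,1]\to\RR$, $f\mapsto f(1)$, which is continuous; since $\{\tfrac1N\wt H_N\}$ satisfies the LDP with rate function $\calI$, the contraction principle \cite[Theorem 4.2.1]{DZ-1998-large} gives the LDP for $\tfrac1NH_N(N)$ with rate function $\mathsf{I}(r)=\inf\{\calI(f):f\in\mathcal{AC}_0,\ f(1)=r\}$. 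Because $\calI(f)=\infty$ as soon as $f'\notin[0,1]$ on a set of positive measure, every $f$ with $\calI(f)<\infty$ has $f(1)=\int_0^1 f'\in[0,1]$, so $\mathsf{I}(r)=\infty$ for $r\notin[0,1]$.

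For $r\in[0,1]$ I would use the form \eqref{I-Derrida} of $\calI$. Abbreviate the two convex, $[0,1]$-supported integrands in \eqref{I-Derrida} as $L_1(p)=p\log\tfrac{\A p}{1+\A}+(1-p)\log\tfrac{1-p}{1+\A}$ and $L_2(q)=q\log\tfrac{q}{1+\B}+(1-q)\log\tfrac{(1-q)\B}{1+\B}$, so that $\calI(f)+K(\A,\B)=\min_{0\le y\le1}\bigl(\int_0^y L_1(f')+\int_y^1 L_2(f')\bigr)$. Exchanging the two infima (the $\min_y$ is an infimum) and using convexity of $L_1,L_2$ together with Jensen's inequality, as in the proof of Proposition~\ref{Prop-LD-derrida-I}, to see that the optimal $f$ is linear on $[0,y]$ and on $[y,1]$, I would reduce to the finite-dimensional problem
$$\mathsf{I}(r)+K(\A,\B)=\inf\bigl\{\,y L_1(p)+(1-y)L_2(q)\ :\ y\in[0,1],\ p,q\in[0,1],\ yp+(1-y)q=r\,\bigr\},$$
whose right-hand side is exactly the value at $r$ of the lower convex envelope of $\mathrm{graph}(L_1)\cup\mathrm{graph}(L_2)$. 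Since $L_1'(t)-L_2'(t)=\log(\A\B)>0$ for every $t\in(0,1)$, the two graphs cross exactly once, $\min(L_1,L_2)$ fails to be convex only at that crossing, and the envelope is obtained by inserting the common double tangent. Solving $L_1'(p)=L_2'(q)$ and $L_1(p)-pL_1'(p)=L_2(q)-qL_2'(q)$ (with $L_1(p)-pL_1'(p)=\log\tfrac{1-p}{1+\A}$ and $L_2(q)-qL_2'(q)=\log\tfrac{(1-q)\B}{1+\B}$) gives the contact points $p^*=\tfrac1{1+\B}$ and $q^*=\tfrac{\A}{1+\A}$, with $p^*<q^*$ precisely because $\A\B>1$, common slope $\log\tfrac{\A}{\B}$, and common intercept $\log\tfrac{\B}{(1+\A)(1+\B)}$. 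Hence the envelope equals $L_1(r)$ on $[0,\tfrac1{1+\B}]$, the affine function $r\log\tfrac{\A}{\B}+\log\tfrac{\B}{(1+\A)(1+\B)}$ on $[\tfrac1{1+\B},\tfrac{\A}{1+\A}]$, and $L_2(r)$ on $[\tfrac{\A}{1+\A},1]$; rewriting $L_1(r)=h(r|\tfrac1{1+\A})+\log\tfrac{\A}{(1+\A)^2}$ and $L_2(r)=h(r|\tfrac{\B}{1+\B})+\log\tfrac{\B}{(1+\B)^2}$ by the same manipulation used in the proof of Proposition~\ref{Prop-B-W}, and subtracting $K(\A,\B)$ from \eqref{K1}, yields \eqref{I-BW++}.

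The main work, I expect, lies in that third step: justifying rigorously that the lower convex envelope has the claimed three-branch shape --- that $L_1\le L_2$ on $[0,p^*]$ and $L_2\le L_1$ on $[q^*,1]$, that the tangent segment meets the two branches $C^1$-smoothly, and that no two-point combination beats $\min(L_1,L_2)$ outside $[p^*,q^*]$ --- and in the elementary but somewhat lengthy algebra identifying the envelope with the closed forms in \eqref{I-BW++}. One should also record in passing that profiles with more than one kink cannot do better, which follows from convexity of $L_1$ and $L_2$ on the two subintervals cut out by the split point $y$ in \eqref{I-Derrida}.
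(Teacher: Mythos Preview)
Your proposal is correct and follows essentially the same route as the paper: the published proof is simply omitted with a reference to \cite{derrida2003-Exact-LDP}, while the paper's expanded (arXiv) version does exactly what you do---apply the contraction principle to \eqref{I-Derrida}, reduce to piecewise-linear $f$ by convexity, and optimize over $(y,r_a,r_b)$---using Lagrange multipliers rather than your equivalent double-tangent/convex-envelope framing, arriving at the same contact points $r_a=\tfrac{1}{1+\B}$, $r_b=\tfrac{\A}{1+\A}$. Your geometric interpretation of the finite-dimensional infimum as the lower convex hull of $\mathrm{graph}(L_1)\cup\mathrm{graph}(L_2)$ is a nice way to see at once why the answer has the three-branch form.
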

\begin{proof} The proof of this formula appears in \cite[Section 3.6 and Appendix B, Case 2]{derrida2003-Exact-LDP} and is omitted.
\end{proof}

\arxiv{
Here are some more details for completeness. This is an expanded version of the calculations summarized on page 809 of \cite{derrida2003-Exact-LDP}.
   We use \eqref{I-Derrida} and write $\mathsf{I}(r)$ as the infimum over $y\in[0,1]$ and over $f\in\mathcal{AC}_0$ with $f(1)=r$.
Due to the convexity of $h(\cdot)$, $f$ must be piecewise linear on $[0,y]$ and on $[y,1]$, so we have
$$
f(x)=\begin{cases}
  r_a x & x\leq y \\
  r_a y+r_b (x-y) & y\leq x\leq 1
\end{cases}
$$
where $0\leq r_a,r_b\leq 1$ and   $r_a y+r_b(1-y)=r$. This gives
\begin{multline*}
  \mathsf{I}(r) =-K(\A,\B)\\+\min\Big\{  y\left(r_a \log \tfrac{\A  r_a}{1+\A}+(1-r_a) \log \tfrac{1-r_a}{1+\A}\right)+
  (1-y)\left(r_b \log \tfrac{ r_b}{1+\B}+(1-r_b) \log \tfrac{(1-r_b)\B}{1+\B}\right):\\
  0\leq y, r_a, r_b \leq 1, \; y r_a+(1-y)r_b=r\Big\}
\end{multline*}

We first seek a minimizer with $y\in(0,1)$. Using Lagrange multipliers with
$$F(r_a,r_b,y,\la)=  y\left(r_a \log \tfrac{\A  r_a}{1+\A}+(1-r_a) \log \tfrac{1-r_a}{1+\A}\right)+
  (1-y)\left(r_b \log \tfrac{ r_b}{1+\B}+(1-r_b) \log \tfrac{(1-r_b)\B}{1+\B}\right)-\la(y r_a+(1-y)r_b-r)$$
\begin{itemize}
  \item from $\frac{\partial F}{\partial r_a}=0$ we get $r_a=\frac{e^\la}{e^\la+\A}$;
  \item from  $\frac{\partial F}{\partial r_b}=0$ we get $r_b=\frac{\B e^\la}{1+\B e^\la}$;
  \item from  $\frac{\partial F}{\partial y}=0$ we get $\la=\log \A/\B$
\end{itemize}
Thus $r_a=\frac{1}{1+\B}$, $r_b=\frac{\A}{1+\A}$ and $y\in(0,1)$ is possible only when $\frac{1}{1+\B}<r<\frac{\A}{1+\A}$. In this case, $F(r_a,r_b,y,0)= r \log \tfrac{\A}{\B}+ \log \tfrac{\B}{(1+\A)(1+\B)}$, giving the formula we seek for $\mathsf I$  in this interval.
If $r\le 1/(1+\B)$, the minimum is attained at $y=1$ with $r_a=r$, giving
$F(r,r_b,y,0)=h(r|\frac{1}{1+\A})+\log \frac{\A}{(1+\A)^2}$.
If $r\ge \A/(1+\A)$, the minimum is attained at $y=0$ with $r_b=r$, giving
$F(r_a,r,y,0)=+h\left(r\middle |\frac{\B}{1+\B}\right)+\log \frac{\B}{(1+\B)^2}$.

Following  \cite{derrida2003-Exact-LDP} it might be worth noting that the two convex curves
$r\mapsto  F(r,r_b,y,0)$ and $r\mapsto  F(r_a,r,y,0)$ intersect at
$$r^* = \frac{ \log\tfrac{(1 + \A) \B}{1 + \B}}{\log(\A \B)}  $$
and that $\frac{1}{1+\B}<r^*<\frac{\A}{1+\A}$.
For example, writing $\A=\frac{t}{\B}$ with $x>0$, inequality $\frac{1}{1+\B}<r^*$ is equivalent to
$$
\log (t+\B+x)-\log(1+\B)- \frac{\log t}{1+\B}>0
$$
This expression is $0$ at $t=1$ and its derivative with respect to $t$, given by
$\frac{1}{\B+t }-\frac{1}{(1+\B)t}$, is positive for $t>1$.

}
We remark that on the coexistence line $\A=\B>1$, the rate function is zero on  the entire interval
$[1/(1+\A),\A/(1+\A)]$. This is consistent with
\cite[Theorem 1.6]{wang2023askey} which implies that mean particle density $\frac1N H_N(N)$ converges in
 distribution to the uniform law on this interval. (Shocks on the coexistence line for open ASEP
 were also described in
 \cite{derrida1997shock,derrida93exact,schutz1993phase}.)



\appendix
\section{Integrability Lemma}
 The L\'evy-Ottaviani maximal inequality and tail integration give the following bound:

\begin{lemma} For $\C>0$, we have
  \label{Lem:Levy}
  \begin{equation}
    \label{Levy-Ott}
    \EE_{\mathrm{rw}}\left[\C^{-\min_{0\leq j \leq N}(S_1(j)-S_2(j))}\right]\leq 1+2
   \, \EE_{\mathrm{rw}}\left[\C^{|S_1(N)-S_2(N)|}\right],
  \end{equation}
  where $\EE_{\mathrm{rw}}$ is expectation with respect to the law of the two independent Bernoulli$(1/2)$ random walks.
\end{lemma}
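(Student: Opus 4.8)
The plan is to recognize $-\min_{0\le j\le N}\bigl(S_1(j)-S_2(j)\bigr)$ as the running maximum of a symmetric random walk, to control its tail by the symmetric form of the L\'evy--Ottaviani maximal inequality, and then to pass from that tail bound back to an exponential moment by Abel summation, which is the discrete counterpart of the tail integration mentioned in the statement.

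Concretely, I would set $D_j:=S_1(j)-S_2(j)$ for $0\le j\le N$, so that $D_0=0$ and $j\mapsto D_j$ is a random walk whose increments are independent and symmetric (each is a difference of two independent $\mathrm{Bernoulli}(1/2)$ variables, hence equals $\pm1$ with probability $1/4$ each and $0$ with probability $1/2$). Put
\[
M:=-\min_{0\le j\le N}D_j=\max_{0\le j\le N}\bigl(S_2(j)-S_1(j)\bigr)\ge 0,
\]
the last inequality because $D_0=0$. If $0<\C\le 1$ then $\C^{M}\le\C^{0}=1$, so \eqref{Levy-Ott} holds trivially; I would therefore assume $\C>1$ from here on. Since the walk $j\mapsto S_2(j)-S_1(j)$ likewise has independent symmetric increments, the L\'evy--Ottaviani maximal inequality (which in the symmetric case yields the constant $2$) gives, for every integer $k\ge 1$,
\[
\PP_{\mathrm{rw}}(M\ge k)=\PP_{\mathrm{rw}}\Bigl(\max_{1\le j\le N}\bigl(S_2(j)-S_1(j)\bigr)\ge k\Bigr)\le 2\,\PP_{\mathrm{rw}}\bigl(S_2(N)-S_1(N)\ge k\bigr)\le 2\,\PP_{\mathrm{rw}}\bigl(|S_1(N)-S_2(N)|\ge k\bigr).
\]

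To finish, I would use the elementary identity $\EE[\C^{Z}]=1+\sum_{k\ge 1}(\C^{k}-\C^{k-1})\,\PP(Z\ge k)$, valid for any $\{0,1,2,\dots\}$-valued $Z$ and any $\C>1$, applied to $Z=M$ and to $Z=|S_1(N)-S_2(N)|$. Because $\C^{k}-\C^{k-1}>0$, the tail bound above then yields
\[
\EE_{\mathrm{rw}}\bigl[\C^{M}\bigr]\le 1+2\sum_{k\ge 1}(\C^{k}-\C^{k-1})\,\PP_{\mathrm{rw}}\bigl(|S_1(N)-S_2(N)|\ge k\bigr)=2\,\EE_{\mathrm{rw}}\bigl[\C^{|S_1(N)-S_2(N)|}\bigr]-1,
\]
which is in fact slightly stronger than \eqref{Levy-Ott}. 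One could equivalently phrase this last step through the continuous identity $\EE[\C^{Z}]=1+(\log\C)\int_0^\infty \C^{t}\,\PP(Z>t)\,dt$ for nonnegative $Z$.

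The proof is short, and the only step requiring genuine care is the verification that the maximal inequality is legitimately invoked: one must note that the increments of the difference walk are symmetric (which holds because the two walks are independent with identical increment law) and keep track of the endpoint $j=0$ in the running maximum, which is harmless since $S_2(0)-S_1(0)=0<k$ for $k\ge1$. Everything else is routine bookkeeping.
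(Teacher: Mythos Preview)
Your argument is correct and follows essentially the same route as the paper: invoke the L\'evy--Ottaviani maximal inequality for the symmetric difference walk, then convert the tail bound into an exponential moment via tail integration. The only cosmetic differences are that the paper bounds $-\min_j D_j$ above by $\max_j|D_j|$ before applying the maximal inequality and uses the continuous tail-integration identity, whereas you apply the one-sided L\'evy inequality directly to $\max_j(-D_j)$ and use discrete Abel summation; your version is marginally sharper (yielding the extra $-1$), but the two proofs are otherwise the same.
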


\begin{proof}
  Recall that for sums of independent symmetric random variables $\{S_1(j)-S_2(j)\}$, the L\'evy-Ottaviani maximal inequality says that
  $$
  \PP_{\mathrm{rw}}\left(\max_{1\leq j \leq N}\left|S_1(j)-S_2(j)\right|>t \right)\leq 2  \PP_{\mathrm{rw}}\left( \left|S_1(N)-S_2(N)\right|>t \right), \quad t\geq 0.
  $$
  Since $$0\leq -\min_{0\leq j \leq N}(S_1(j)-S_2(j))\leq \max_{1\leq j \leq N}|S_1(j)-S_2(j)|,$$ we see that if
  $\C\leq 1$ then the left hand side of \eqref{Levy-Ott} is bounded by $1$. And if $\C>1$,
then by the above bound and tail integration, we have
\begin{multline*}
  \EE_{\mathrm{rw}}\left[\C^{-\min_{0\leq j \leq N}(S_1(j)-S_2(j))}\right]\leq
   \EE_{\mathrm{rw}}\left[\C^{\max_{1\leq j \leq N}\left|S_1(j)-S_2(j)\right|}\right]
  \\ = 1+\log(\C) \int_0^\infty \C^t
  \PP_{\mathrm{rw}}\left(\max_{1\leq j \leq N}\left|S_1(j)-S_2(j)\right|>t \right) dt
   \\
   \leq
   1+2 \log(\C) \int_0^\infty \C^t
  \PP_{\mathrm{rw}}\left( \left|S_1(N)-S_2(N)\right|>t \right) dt
  \\<
  2\left(1+ \log(\C) \int_0^\infty \C^t
  \PP_{\mathrm{rw}}\left( \left|S_1(N)-S_2(N)\right|>t \right) dt \right)
  \\ =2
   \, \EE_{\mathrm{rw}}\left[\C^{|S_1(N)-S_2(N)|}\right].
\end{multline*}
\end{proof}
\begin{remark}\label{Rem:howtouse}
Note that   elementary inequalities $e^{|x|}\leq e^x+e^{-x}$ and  $1+\cosh(x)\leq 2 e^{x^2/2}$ together with independence give
\begin{multline*}
     \EE_{\mathrm{rw}}\left[e^{\la |S_1(N)-S_2(N)|}\right]\leq
 \EE_{\mathrm{rw}}\left[e^{\la (S_1(N)-S_2(N))}\right]+\EE_{\mathrm{rw}}\left[e^{\la (S_2(N)-S_1(N))}\right]\\=2 \left(\frac{1+\cosh(\la)}{2}\right)^N\leq
2 e^{\la^2 N/2}.
\end{multline*}
In particular,  using this with $\la=2/\sqrt{N}$ we get a bound
$$\sup_N \EE_{\mathrm{rw}}\left[\mathcal{E}^2(\vv W_-\topp N)\right]<\infty$$
that is used in the proof of Theorem \ref{Thm2.1}, and  using this bound with $\la=-8\log(\A\B)$ and $\la=8\log \B$, we get a bound
\begin{multline*}
 \sup_N \frac1N \log   \EE_{\mathrm{rw}}\left[ e^{ 4 N \log \tilde g(X_N)}\right]=
\sup_N \frac1N \log   \EE_{\mathrm{rw}}\left[ e^{4\log  g(\vv S_1,\vv S_2)}\right]
\\ \leq \sup_N   \frac1{2N} \log \EE_{\mathrm{rw}}\left[(\A\B)^{-8 \min_{0\leq j\leq N}\{S_1(j)-S_2(j)\}  }\right]+\sup_N\frac1{2N}
 \log \EE_{\mathrm{rw}}\left[\B^{8(S_1(N)-S_2(N)) }\right]
 \\\leq
  \sup_N   \frac1{2N} \log \left(1+2\EE_{\mathrm{rw}}\left[\C^{|S_1(N)-S_2(N)|  }\right]\right)+\sup_N\frac1{2N}
 \log \EE_{\mathrm{rw}}\left[\B^{8(S_1(N)-S_2(N)) }\right] <\infty,
\end{multline*}
which is used in the proof of Theorem \ref{Thm:LDP}.
\end{remark}
\arxiv
{
\section{Further discussion of the large deviations when  $\A\B<1$}\label{DLS2US}
According to \cite[(3.3) and (3.6)]{derrida2003-Exact-LDP}, the rate function  in the fan region $\A\B\leq 1$ is
\begin{equation}
  \label{I-Derrida2}
  \mathcal{I}(f)=\sup_{G}\left\{\int_0^1\left( f'(x)\log( f'(x)G(x))+(1-f'(x))\log(1-f'(x))(1-G(x))\right) dx \right\}-K(\A,\B),
\end{equation}
where the supremum is over all nondecreasing functions $G$ such that $G(0)=\frac{\A}{1+\A}$ and
$G(1)=\frac{1}{1+\B}$ and
 $K(\A,\B)$ is given by \eqref{K:ab<1}.
According to \cite{derrida2003-Exact-LDP} the supremum in  \eqref{I-Derrida2} is attained at
$G_*$ given by \eqref{*G*}.
Of course, \eqref{I-Derrida2} is  $$\mathcal{I}(f)=\int_0^1 h(f'(x))dx -K(\A,\B)+  \sup_G J_*(f, G),$$
where $J_*(f,G)$ was defined in \eqref{JG*}. This differs slightly from \eqref{I-Derrida2*} where $J_*(\wt f,G_*)$ is used instead of  $J_*( f,G_*)$.

  Any monotone function can be represented as a pointwise limit of a sequence of continuous functions,
and the functional $J_*(f,\cdot)$ is continuous with respect to this convergence. In addition, we can write inequalities instead of equalities at the endpoints.
 To reconcile the formulas \eqref{I-Derrida2} and \eqref{I-Derrida2*} for the rate function, we fix $f\in\mathcal{AC}_0$ with continuous $f'\in[0,1]$ and such that $G_*'$ is continuous. We will  show that
  $\sup_{G} J_*(f,G)=J_*(f,G_*)=J_*(\wt f,G_*)$  by verifying that for any   smooth nondecreasing $G$ that satisfies
  \begin{equation}\label{G-class}
       \frac{\A}{1+\A} \leq G(0)\leq G(1)\leq \frac{1}{1+\B}
  \end{equation}
 we have
  \begin{equation}\label{Pavel1}
 J_*(f,G)\leq J_*(\wt f,G)\leq J_*(\wt f,G_*)=J_*( f,G_*),
  \end{equation}
hence the supremum $\sup_{G} J_*(f,G)$ is attained at $G_*$ and is equal to $J_*(\wt f,G_*)$.

The argument is similar to \cite{derrida2003-Exact-LDP}.
To verify the first inequality in~\eqref{Pavel1}, we use the fact that $f - \wt f\geq 0$ and that $f - \wt f$ vanishes at the endpoints
$x=0,1$. Since $G'\geq 0$, integrating by parts, we obtain \begin{align}
 J_*(f,G) =& \Big(f\log G +(1-f)\log(1-G)\Big)\Big|_{0}^1 - \int_0^1 f(x) \frac{G'(x)}{G(x)\big(1-G(x)\big)}dx\notag\\
 \leq&
 \Big(\wt f\log G +(1-\wt f)\log(1-G)\Big)\Big|_{0}^1 - \int_0^1 \wt f(x) \frac{G'(x)}{G(x)\big(1-G(x)\big)}dx=J_*(\wt f,G).\label{Pavel2}
  \end{align}

To prove the second inequality in~\eqref{Pavel1} we note that for each $s \in [0,1]$ the function $t \mapsto s\log t +(1-s) \log(1-t)$ is concave on $[0,1]$ and attains its maximum at $t=s$. Moreover, its maximum on the interval $R=[\frac{a}{a+1},\frac{1}{b+1}]$ is attained at the point of $R$ that is the closest to $s$. Therefore, for each $x \in [0,1]$ the maximal value of $t \mapsto \wt f'(x)\log t +(1-\wt f'(x)) \log(1-t)$ on $R$ is attained at $G_*(x)$. Thus, for any $G$ satisfying~\eqref{G-class} we have
$$
\wt f'(x)\log G(x) +(1-\wt f'(x)) \log(1-G(x)) \leq
\wt f'(x)\log G_*(x) +(1-\wt f'(x)) \log(1-G_*(x)).
$$
The second inequality in~\eqref{Pavel1}  follows via integration.

To prove the last part of~\eqref{Pavel1},  note that $G_*$ is constant in a neighborhood of any point where $f> \wt f$, therefore $(f-\wt f)G_*'=0$ on the entire interval $[0,1]$. Thus,~\eqref{Pavel2} becomes an equality for $G=G_*$.

We now extend this to any $f$ in $\mathcal{AC}_0$ with $f'\in[0,1]$. We can find a sequence of smooth functions $f_n$ from $\mathcal{AC}_0$ such that $f'_n\in [0,1]$ and $f'_n$ converge to $f'$ a.\,e. on $[0,1]$. Then $f_n$ converge to $f$ uniformly, and thus $\wt f_n$ converge to $\wt f$ uniformly on $[0,1]$. The functions $\wt f_n$ and $\wt f$ are convex, therefore the uniform convergence implies convergence of the derivatives $\wt f’_n$ to $\wt f’$ at the points of continuity of $\tilde f’$ (in particular, a.e.). Then $J_*(f_n,G)$ converge to $J_*(f,G)$ and $J_*(\wt f_n,G)$ converge to $J_*(\wt f,G)$ uniformly with respect to $G$. Therefore,
$$
\sup_G J_*(f,G) = \lim_{n \to \infty}\sup_G J_*(f_n,G)  =  \lim_{n \to \infty}\sup_G J_*(\wt f_n,G) =
\sup_G J_*(\wt f,G),
$$
and the last supremum is attained at $G_*$.
}

\subsection*{Acknowledgement}
 This research benefited from discussions with Guillaume Barraquand, Ivan Corwin,
and Yizao Wang. This work was supported by a Simons grant   (703475, WB).
\providecommand{\bysame}{\leavevmode\hbox to3em{\hrulefill}\thinspace}
\renewcommand{\MR}[1]{\href{http://www.ams.org/mathscinet-getitem?mr=#1}{MR#1}}

\end{document}